\newtheorem{theo}{Theorem}
\newtheorem{lemma}[theo]{Lemma}
\newtheorem{claim}[theo]{Claim}
\numberwithin{theo}{section}
\newcommand{\conv}{{\rm conv}}
\newcommand{\diam}{{\rm diam}}
\tikzset{
vtx/.style={inner sep=1.1pt, outer sep=0pt, circle, fill,draw}
}
\tikzstyle{vertex}=[circle, draw, inner sep=0pt, fill, minimum size=6pt]
\newcommand{\vertex}{\node[vertex]}
\newcommand{\vc}[1]{\ensuremath{\vcenter{\hbox{#1}}}}
\tikzset{unlabeled_vertex/.style={inner sep=1.7pt, outer sep=0pt, circle, fill}} 
\tikzset{labeled_vertex/.style={inner sep=2.2pt, outer sep=0pt, rectangle, fill=yellow, draw=black}} 
\tikzset{edge_color0/.style={color=black,line width=1.2pt,opacity=0.5}} 
\tikzset{edge_color1/.style={color=red,  line width=1.2pt,opacity=1}} 
\tikzset{edge_color2/.style={color=black, line width=1.2pt,opacity=1}} 
\tikzset{vertex_color1/.style={inner sep=1.7pt, outer sep=0pt, draw, circle, fill=red}} 
\tikzset{vertex_color2/.style={inner sep=1.7pt, outer sep=0pt, draw, circle, fill=blue}} 
\tikzset{vertex_color3/.style={inner sep=1.7pt, outer sep=0pt, draw, circle, fill=green}} 
\tikzset{labeled_vertex_color1/.style={inner sep=2.2pt, outer sep=0pt, draw, rectangle, fill=red}} 
\tikzset{labeled_vertex_color2/.style={inner sep=2.2pt, outer sep=0pt, draw, rectangle, fill=blue}} 
\tikzset{labeled_vertex_color3/.style={inner sep=2.2pt, outer sep=0pt, draw, rectangle, fill=green}} 
\def\outercycle#1#2{ \draw \foreach \x in {0,1,...,#2}{(0.5*\x,0) coordinate(x\x)}; 
\path (0,0.3) -- (1,0.3); 
} 
\def\drawhypervertex#1#2{ \draw[edge_color2] (x#1)++(0,-0.2-0.2*#2)+(-0.2,0) -- +(0.2,0);} 
\def\drawhyperedge#1#2{ \draw[dotted] (x0)++(0,-0.2-0.2*#1)--++(0.5*#2-0.5,0);
\path (0,-0.4-0.2*#1) -- (0,0); 
} 
\tikzset{
vtx/.style={inner sep=1.1pt, outer sep=0pt, circle, fill,draw}, 
vtxl/.style={inner sep=1.1pt, outer sep=0pt, rectangle, fill=yellow,draw=black}, 
hyperedge/.style={fill=pink,opacity=0.5,draw=black}, 
}
\begin{document}

\title{Almost Congruent Triangles}

\author{%
J\'ozsef Balogh\footnote{Department of Mathematics, University of Illinois at Urbana-Champaign, Urbana, Illinois 61801, USA. E-mail: \texttt{jobal@illinois.edu}. Research is partially supported by NSF Grant DMS-1764123, NSF RTG grant DMS 1937241, Arnold O. Beckman Research Award (UIUC Campus Research Board RB 22000), and the Langan Scholar Fund (UIUC).} 
\and Felix Christian Clemen \footnote {Department of Mathematics, Karlsruhe Institute of Technology, 76131 Karlsruhe, Germany, E-mail: \texttt{felix.clemen@kit.edu}.}
 \and Adrian Dumitrescu \footnote {Algoresearch L.L.C., Milwaukee, WI 53217, USA, E-mail: \texttt{ad.dumitrescu@algoresearch.org}. }
}

\maketitle
\begin{abstract}
  Almost $50$ years ago Erd\H{o}s and Purdy asked the following question:
  Given $n$ points in the plane, how many triangles can be approximate congruent to equilateral triangles?
  
  They pointed out that  by dividing the points evenly into three small clusters built 
  around the three vertices of a fixed equilateral triangle, one gets at least $\left\lfloor \frac{n}{3} \right\rfloor \cdot \left\lfloor \frac{n+1}{3} \right\rfloor \cdot \left\lfloor \frac{n+2}{3} \right\rfloor$
  such approximate copies.
  In this paper we provide a matching upper bound and thereby answer  their question.

More generally, for every  triangle $T$ we determine the maximum number of approximate congruent triangles to $T$ in a point set of size $n$.

Parts of our proof are based on hypergraph Tur\'an theory: for each point set in the plane and a triangle $T$, we construct a $3$-uniform hypergraph $\mathcal{H}=\mathcal{H}(T)$, which contains no hypergraph as a subgraph  from a family of forbidden hypergraphs $\mathcal{F}=\mathcal{F}(T)$. Our upper bound on  the number of edges  of $\mathcal{H}$ will 
determine the maximum number of triangles that are approximate congruent to $T$. 

\medskip
\textbf{\small Keywords}: congruent triangles, hypergraphs, Lagrangian method. 

\end{abstract}

\section{Introduction}
There is a great variety of extremal problems on various properties of finite point sets in the plane. Here, we study  a problem concerning almost congruent triangles. Let $T$ be an arbitrary triangle with side lengths $a,b,c$. Let $\varepsilon>0$ and define $\varepsilon'=\varepsilon  \cdot \min\{a,b,c\}$. A triangle $T'=A'B'C'$ is $\varepsilon$-\emph{congruent} to $T$, if there exists points $A,B,C$ such that the triangle $ABC$ is congruent to $T$, and $A',B',C'$ are contained in disks of radius $\varepsilon'$ around $A,B$ and $C$, respectively. Given a triangle $T$, $\varepsilon>0$ and an integer $n$, denote by $h(n,T,\varepsilon)$ the maximum number of triangles $\varepsilon$-\emph{congruent}  to $T$ in a point set $P\subseteq \mathbb{R}^2$ of size $n$. Further, let
\begin{align*}
h(n,T):=\min_{\varepsilon>0}h(n,T,\varepsilon).
\end{align*}
Note that if two triangles $T,T'$ are similar to each other,  then $h(n,T)=h(n,T')$, simply by scaling an extremal construction. 

In 1975, Erd\H{o}s and Purdy~\cite{MR0392837} asked to determine $h(n,T)$ for $T$ being an equilateral triangle and provided the following lower bound construction: Distribute $n$ points as evenly as possible into three small circular clusters centered at the vertices of $T$, where the cluster radius is set at $\varepsilon \cdot \min (a,b,c)$. We answer the question of Erd\H{o}s and Purdy by providing a matching upper bound.
\begin{theo}
\label{equilaterl}
Let $T$ be an equilateral triangle. Then for every positive integer $n$, we have
\begin{align*}
h(n,T)=\left\lfloor \frac{n}{3} \right\rfloor \cdot \left\lfloor \frac{n+1}{3} \right\rfloor \cdot \left\lfloor \frac{n+2}{3} \right\rfloor.
\end{align*}
\end{theo}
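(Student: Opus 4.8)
Here is how I would prove Theorem~\ref{equilaterl}.

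The lower bound is the construction of Erd\H os and Purdy: for each fixed $\varepsilon>0$, fix an equilateral triangle $V_1V_2V_3$ congruent to $T$ and partition the $n$ points into three clusters of sizes $\lfloor n/3\rfloor,\lfloor(n+1)/3\rfloor,\lfloor(n+2)/3\rfloor$, placing cluster $i$ inside the disk of radius $\varepsilon\min\{a,b,c\}$ centred at $V_i$. Every transversal triple (one point per cluster) is then $\varepsilon$-congruent to $T$, so $h(n,T,\varepsilon)\ge\lfloor n/3\rfloor\lfloor(n+1)/3\rfloor\lfloor(n+2)/3\rfloor$ for every $\varepsilon>0$, and hence the same bound holds for $h(n,T)$. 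For the upper bound it suffices to produce a single $\varepsilon_0>0$ with $h(n,T,\varepsilon_0)$ at most the claimed quantity; since $T$ is equilateral everything is scale-invariant, so $\varepsilon_0$ can be taken to be an absolute constant, the same for all $n$. Fix such an $\varepsilon_0$ (to be determined), write $s=a=b=c$ and $\varepsilon'=\varepsilon_0 s$, let $P\subseteq\mathbb R^2$ with $|P|=n$, and let $\mathcal H$ be the $3$-uniform hypergraph on vertex set $P$ whose edges are the triples $\{p,q,r\}$ for which the triangle $pqr$ is $\varepsilon_0$-congruent to $T$. Then $e(\mathcal H)$ equals the number of $\varepsilon_0$-congruent copies of $T$ in $P$, so the task reduces to bounding $e(\mathcal H)$.

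The geometric heart is the claim that, for $\varepsilon_0$ a small enough absolute constant, $\mathcal H$ contains no copy of $F_5$ (edges $\{1,2,3\},\{1,2,4\},\{3,4,5\}$) and no copy of $K_4^{(3)-}$ (edges $\{1,2,3\},\{1,2,4\},\{1,3,4\}$); equivalently, $\mathcal H$ is cancellative. For $K_4^{(3)-}$: if four points had $123$, $124$, $134$ all $\varepsilon_0$-congruent to $T$, then all six pairwise distances would lie in $[s-2\varepsilon',s+2\varepsilon']$, which for small $\varepsilon_0$ contradicts the (robust) fact that no four points in the plane are pairwise (nearly) equidistant. For $F_5$: if $\{p,q,a\}$ and $\{p,q,b\}$ are both $\varepsilon_0$-congruent to $T$, then --- using that $T$ is equilateral, so the shared pair $p,q$ is matched to two (interchangeable) vertices of $T$ in each copy --- each of $a,b$ lies within $O(\varepsilon')$ of one of the two equilateral completions $P_\pm(pq)$ of the segment $pq$, and these two points are at mutual distance $\sqrt3\,|pq|$, where $|pq|\in[s-2\varepsilon',s+2\varepsilon']$. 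Hence $|ab|$ is either $O(\varepsilon')$ (same side) or within $O(\varepsilon')$ of $\sqrt3\,s$ (opposite sides); in either case $|ab|\notin[s-2\varepsilon',s+2\varepsilon']$ once $\varepsilon_0$ is small enough, so no point $c$ can make $\{a,b,c\}$ an edge. This kills the edge $\{3,4,5\}$ of a putative $F_5$ (and, by the same distance computation, also the edges $\{1,3,4\}$ and $\{2,3,4\}$), so $\mathcal H$ is indeed cancellative.

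Given the lemma, the result follows from Bollob\'as's theorem on cancellative $3$-graphs: a $3$-uniform hypergraph on $n$ vertices with no three edges $e_1,e_2,e_3$ satisfying $e_1\ne e_2$ and $e_1\triangle e_2\subseteq e_3$ has at most $\lfloor n/3\rfloor\lfloor(n+1)/3\rfloor\lfloor(n+2)/3\rfloor$ edges. Applying it to $\mathcal H$ gives $e(\mathcal H)\le\lfloor n/3\rfloor\lfloor(n+1)/3\rfloor\lfloor(n+2)/3\rfloor$, hence $h(n,T)\le h(n,T,\varepsilon_0)\le\lfloor n/3\rfloor\lfloor(n+1)/3\rfloor\lfloor(n+2)/3\rfloor$, matching the lower bound. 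The step I expect to require the most care is the geometric lemma: one must carry all the $\pm\varepsilon'$ perturbations through (including the freedom in which vertices of $T$ the shared pair is matched to --- the only place equilaterality is genuinely used), make the ``near-equidistant quadruple'' and ``$\sqrt3\,s\ne s$'' separations quantitative, and check that one absolute $\varepsilon_0$ serves all $n$ at once. This is precisely the point that becomes substantially harder for a general triangle $T$, where the forbidden family $\mathcal F(T)$ is larger and the matching Tur\'an bound must instead be extracted via the Lagrangian method.
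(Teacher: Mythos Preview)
Your proposal is correct and follows essentially the same approach as the paper: reduce to Bollob\'as's bound on cancellative $3$-graphs by showing that $F_5$ and $K_4^{3-}$ are forbidden for the equilateral triangle. The only cosmetic difference is in the $F_5$ verification---the paper observes that all six pairwise distances among $p_1,p_2,p_3,p_4$ lie in $[s-2\varepsilon',s+2\varepsilon']$ (using the edge $345$ to cover the pair $34$) and then appeals to a ``no four near-equidistant points'' lemma, whereas you locate $p_3,p_4$ near the two equilateral completions of $p_1p_2$ and read off $|p_3p_4|\approx 0$ or $\approx\sqrt3\,s$ directly; both arguments are equally short and yield the same conclusion.
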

Furthermore, up to divisibility conditions, we give a complete characterization for all other triangles. We say that a triangle is of type $(\alpha,\beta,\gamma)$ if $\alpha\geq \beta\geq \gamma$ are its interior angles. We measure angles in degrees.

\begin{theo}\label{upperbounds}
Let $T$ be a triangle and $n$ be a positive integer.
\begin{itemize}
\item[(a)] Let $T$ be right angled. Then, $h(n,T)\leq\frac{n^3}{16}$, and if additionally $n$ is divisible by $4$, then $h(n,T)=\frac{n^3}{16}$.

\item[(b)] Let $T$ be of type $(120^\circ,30^\circ,30^\circ)$. Then, $h(n,T)\leq \frac{4}{81}n^3$, and if additionally $n$ is divisible by $9$, then $h(n,T)= \frac{4}{81}n^3$.

\item[(c)]Let $T$ be of type $\left(\frac{4\cdot 180}{7}^\circ,\frac{2\cdot 180}{7}^\circ,\frac{180}{7}^\circ\right)$. Then, $h(n,T)\leq\frac{2}{49}n^3$, and if additionally $n$ is divisible by $7$, then $h(n,T)=\frac{2}{49}n^3$.

\item[(d)] Let $T$ be of type $(108^\circ,36^\circ,36^\circ)$ or $(72^\circ,72^\circ,36^\circ)$. Then, $h(n,T)\leq \frac{n^3}{25}$, and if additionally $n$ is divisible by $5$, then $h(n,T)= \frac{n^3}{25}$.

\item[(e)] Let $T$ be not right angled, and not of type $(120^\circ,30^\circ,30^\circ)$, $\left(\frac{4\cdot 180}{7}^\circ,\frac{2\cdot 180}{7}^\circ,\frac{180}{7}^\circ\right)$, $(108^\circ,36^\circ,36^\circ)$ or $(72^\circ,72^\circ,36^\circ)$. Then, $h(n,T)\leq \frac{n^3}{27}$, and if additionally $n$ is divisible by $3$, then $h(n,T)= \frac{n^3}{27}$.
\end{itemize}
\end{theo}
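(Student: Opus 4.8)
The plan is to translate the geometric problem into extremal hypergraph theory, as announced in the abstract. Fix a triangle $T$ of type $(\alpha,\beta,\gamma)$. For a planar point set $P$ with $|P|=n$ and $\varepsilon>0$, let $\mathcal{H}=\mathcal{H}(T,P,\varepsilon)$ be the $3$-uniform hypergraph on vertex set $P$ whose edges are the triples forming a triangle $\varepsilon$-congruent to $T$, so $h(n,T,\varepsilon)=\max_{|P|=n}e(\mathcal{H})$. Since $\varepsilon\mapsto e(\mathcal{H})$ is non-decreasing and integer-valued, $h(n,T)=h(n,T,\varepsilon)$ for all sufficiently small $\varepsilon=\varepsilon(n)$; thus for the upper bounds it suffices to bound $e(\mathcal{H})$ for one conveniently small $\varepsilon$, and for the lower bounds it suffices to exhibit one good configuration for every $\varepsilon>0$ (provided below by clustered constructions). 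If $T$ is isosceles or equilateral a triple may be $\varepsilon$-congruent to $T$ under two resp.\ six labelings; this does not change $e(\mathcal{H})$ since we count unordered triples, but I would fix one reference labeling per counted copy to track which angle sits at which vertex.

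The geometric content is isolated in one structural claim: there is a \emph{finite} family $\mathcal{F}(T)$ of $3$-graphs such that for every $n$ there is $\varepsilon_0(n,T)>0$ with $\mathcal{H}(T,P,\varepsilon)$ being $\mathcal{F}(T)$-free whenever $|P|=n$ and $0<\varepsilon<\varepsilon_0(n,T)$; here $F\in\mathcal{F}(T)$ means no planar point configuration realizes all triples indexed by $E(F)$ as triangles exactly congruent to $T$. The $\mathcal{F}(T)$-freeness for small $\varepsilon$ is a compactness statement: a copy of a fixed $F$ in $\mathcal{H}$ occupies at most $|V(F)|$ points inside a disk of radius $\diam(T)+O(\varepsilon)$, so if such copies persisted along some $\varepsilon_m\to0$, a convergent subsequence of the (recentered) configurations would realize $F$ exactly, contradicting $F\in\mathcal{F}(T)$. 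The crux is determining $\mathcal{F}(T)$, i.e.\ classifying the small $3$-graphs \emph{not} realizable by congruent copies of $T$ — a geometry problem with case analysis on which angles and sides of $T$ coincide. It is essential that one forbids \emph{congruent} (not merely similar) copies: for $T$ equilateral with side $s$, two triangles $p_1p_2p_3$ and $p_1p_2p_4$ congruent to $T$ force $|p_3p_4|=\sqrt3\,s$, so $p_3p_4p_5$ cannot be congruent to $T$ either, whence $\{123,124,345\}\in\mathcal{F}(T)$ — the Frankl--F\"uredi configuration whose Tur\'an number is the right-hand side of Theorem~\ref{equilaterl}. The exceptional triangles of Theorem~\ref{upperbounds} are exactly those admitting a denser realizable $3$-graph: a rectangle realizes $K_4^{(3)}$ (four congruent right triangles); an equilateral triangle together with its centre realizes the $4$-vertex $3$-graph whose edges are the three triples through one fixed vertex (three congruent $(120^\circ,30^\circ,30^\circ)$ triangles); and triangles inscribed in the regular pentagon, resp.\ heptagon, realize the tight cycle $C_5^{(3)}$, resp.\ the $14$-edge ``gap-$(1,2,4)$'' $3$-graph on $\mathbb{Z}_7$, matching the types $(108^\circ,36^\circ,36^\circ)/(72^\circ,72^\circ,36^\circ)$ and $(\tfrac{4\cdot180}{7}^\circ,\tfrac{2\cdot180}{7}^\circ,\tfrac{180}{7}^\circ)$.

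Granting this, the upper bounds follow from the Lagrangian method. For any $3$-graph $\mathcal{H}$ on $n$ vertices one has $e(\mathcal{H})\le\lambda(\mathcal{H})\,n^3$ (evaluate the Lagrangian at the uniform weight vector), so it suffices to show that every $\mathcal{F}(T)$-free $3$-graph has Lagrangian at most $\lambda(\mathcal{G}_T)$, where $\mathcal{G}_T$ is the ``pattern'' above: a single edge for non-exceptional $T$ ($\lambda=1/27$); $K_4^{(3)}$ for right $T$ ($\lambda=\binom{4}{3}/4^3=1/16$); the $4$-vertex, $3$-edge star for $(120^\circ,30^\circ,30^\circ)$ ($\lambda=\max_{0\le t\le1}t(1-t)^2/3=4/81$); $C_5^{(3)}$ for the golden-ratio types ($\lambda=5/5^3=1/25$); and the $14$-edge $3$-graph on $\mathbb{Z}_7$ ($\lambda=14/7^3=2/49$). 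Bounding the Lagrangian of an $\mathcal{F}(T)$-free $3$-graph is the hypergraph-Tur\'an heart of the argument: one applies an optimal weighting to $\mathcal{H}$ and uses $\mathcal{F}(T)$-freeness to control its support, reducing to a finite optimization; for non-exceptional $T$ this is essentially the Frankl--F\"uredi theorem for $\{123,124,345\}$. For the matching lower bounds, the balanced blow-up of $\mathcal{G}_T$ is realized by distributing the $n$ points into clusters placed at the vertices of the corresponding configuration (a rectangle; an equilateral triangle with its centre, with cluster sizes $n/3,2n/9,2n/9,2n/9$; a regular $5$- or $7$-gon), which for $n$ divisible by the stated modulus matches the upper bound. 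Finally, the sharper floor formula of Theorem~\ref{equilaterl} (and any analogous refinements) requires in addition a stability statement — a nearly extremal $\mathcal{F}(T)$-free $3$-graph is a nearly balanced blow-up of $\mathcal{G}_T$ — followed by optimization over integer cluster sizes, which for three parts gives the product of floors.

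I expect the main obstacles to be twofold: the geometric classification behind $\mathcal{F}(T)$, and in particular proving that the list of exceptional triangles is exhaustive — that for every other $T$ no $3$-graph of Lagrangian exceeding $1/27$ is realizable by congruent copies of $T$ — and, on the combinatorial side, establishing the Lagrangian upper bound for \emph{all} $\mathcal{F}(T)$-free $3$-graphs (not merely for blow-ups of $\mathcal{G}_T$), together with the stability result needed for the exact values.
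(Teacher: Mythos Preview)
Your outline is broadly aligned with the paper --- hypergraph translation, forbidden subgraphs via a compactness/limit argument, Lagrangians, and clustered lower bounds --- but two of the load-bearing steps are misidentified.

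First, the claim that for non-exceptional $T$ ``this is essentially the Frankl--F\"uredi theorem for $\{123,124,345\}$'' is wrong: $F_5$ is \emph{not} forbidden for a generic scalene triangle (sharing a side of $T$ gives up to four third vertices, and one can frequently complete an $F_5$), so $\mathrm{ex}(n,F_5)$ does not control case~(e). The paper instead proves that for every $T$ in~(e) the three hypergraphs $K_4^{3-}$, $C_5$, and $F_{3,2}$ are forbidden, and these are what drive the bound.

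Second, you propose to bound the Lagrangian of an arbitrary $\mathcal{F}(T)$-free $3$-graph. That is the right target, but you are missing the mechanism. The paper's key reduction (Lemmas~\ref{lagrangian}--\ref{lagrangian2}) is to take an optimal weighting, pass to the support so that the shadow is complete, and then push the compactness argument again to obtain an \emph{actual point set} $P'$ with $\mathcal{H}(T,P')$ realizing that support exactly. Because the shadow is complete, all pairwise distances in $P'$ lie in $\{a,b,c\}$, so $P'$ is an $s$-distance set with $s\le3$; the Erd\H{o}s--Kelly and Shinohara classifications then force $|P'|\le7$ and list the finitely many configurations, which are checked by hand for cases (b), (d), (e) and for the $(90^\circ,60^\circ,30^\circ)$ sub-case of (a). For the remaining right triangles in (a) and for (c), the paper does \emph{not} bound a Lagrangian directly; it invokes the Falgas-Ravry--Vaughan flag-algebra determinations $\pi(\{F_{3,2},J_4\})=3/8$ and $\pi(\{K_4^{3-},F_{3,2},C_5\})=12/49$, and then a short self-similarity argument (Lemma~\ref{asymp}) upgrades the asymptotic bound to $h(n,T)\le a n^3$ for every $n$ --- no stability is used anywhere for Theorem~\ref{upperbounds}. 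Your plan would need these ingredients (few-distance-set classification, the two flag-algebra inputs, and the blow-up trick of Lemma~\ref{asymp}) to go through.
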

Our paper is organized as follows. In the rest of this section, we discuss related work, present the lower bound constructions for Theorem~\ref{upperbounds} and a simple proof of Theorem~\ref{equilaterl} for which we use hypergraph Tur\'an theory. In Section~\ref{forbidden}, we establish connections between hypergraph Tur\'an theory and the problem of determining $h(n,T)$. In Section~\ref{lagrang}, we use Lagrangians of hypergraphs to translate the problem of determining $h(n,T)$ to a weighted hypergraph Tur\'an problem on few vertices. Finally, in Section~\ref{final}, we combine our results to complete the proof of
Theorem~\ref{upperbounds}.

\subsection{Related work}
As one would expect, the number of triangles congruent to
a given one is much smaller than the number of almost congruent ones.
Let $C(n)$ denote the maximum number of triples in an $n$-element point set
that induce a triangle congruent to a given triangle $T$.
It is known~\cite{Pa17} that $\Omega(n^{1 + c/\log{\log n}})  \leq C(n) \leq O(n^{4/3})$.
For points in convex position, linear bounds are in effect.
For instance, Pach and Pinchasi~\cite{MR2040883} proved that every set of $n$ points in strictly convex position in the plane has at most
$\left \lfloor 2(n - 1)/3 \right \rfloor$ triples that span equilateral triangles with side length 1, and
this bound is best possible for every $n$. 

Regarding similar triangles, one would expect this number to be larger, and indeed,
there exists a positive constant $c$ such that for any triangle $T$ and any $n \geq 3$,
there is an $n$-element point set in the plane with at least $c n^2$ triples
that induce triangles similar to $T$~\cite[Ch.~6.1]{MR2163782}, \cite{Pa17}.
For equilateral triangles, more precise quadratic bounds were obtained by \'{A}brego and Fern\'{a}ndez-Merchant~\cite{MR1727127},
although the non-matching leading constants still leave room for improvement. 

Allowing approximation beyond similarity yields functions of larger order of magnitude. 
In particular, B\'ar\'any and F\"{u}redi~\cite{MR3953886} have shown that the maximum number of
$\varepsilon$-similar copies of an equilateral triangle (suitably defined in terms of angles)
is $n^3/24 -O(n)$. Balogh, Clemen and Lidick\'y~\cite{MR4404492} extended this result to almost all triangles $T$. However, it remains open to give a full characterization, analogue to Theorem~\ref{upperbounds}, for the maximum number of
$\varepsilon$-similar copies of a triangle $T$.

\subsection{Constructions}
In this section we present the lower bound constructions for Theorem~\ref{upperbounds}, see Figure~\ref{Fig:shapes} for illustrations of the underlying point configurations.
\\

\noindent
(a) Let $T$ be an arbitrary right triangle. Fix the rectangle which contains four copies of $T$. Assume that $n$ is a multiple of $4$. Partition $n$ points into four groups, each of size  $n/4$, where each group of points is placed inside a disk of sufficiently small radius centered at a vertex of the rectangle. 
This construction contains $n^3/16$ triangles $\varepsilon$-congruent to $T$.\\

\noindent
(b) Let $T$ be a triangle of type $(120^\circ,30^\circ,30^\circ)$ with side lengths $a,b,b$. Fix an equilateral triangle with sides of length $a$. Assume that $n$ is divisible by $9$. Partition $n$ points into four groups, three of size $2n/9$ and the last of size $n/3$, where the three smaller groups are placed inside small disks centered at the vertices and the large group inside a small disk centered at the center of the equilateral triangle. This construction contains $4n^3/81$ triangles $\varepsilon$-congruent to $T$.\\

\noindent
(c) Let $T$ be a triangle of type $\left(\frac{4\cdot 180}{7}^\circ,\frac{2\cdot 180}{7}^\circ,\frac{180}{7}^\circ\right)$ with side lengths $a<b<c$. Fix a regular $7$-gon with sides of length $a$. Assume that $n$ is a multiple of $7$. Partition $n$ points into seven groups, each of size  $n/7$, where each group of points is placed inside a disk of sufficiently small radius centered at a vertex of the $7$-gon. 
This construction contains $2n^3/49$ triangles $\varepsilon$-congruent to $T$.\\

\noindent
(d) Let $T$ be a triangle of type $(108^\circ,36^\circ,36^\circ)$ (or ($72^\circ,72^\circ,36^\circ)$) with side lengths $a,b,b$ (or $a,a,b$), where $a>b$. Fix a regular pentagon with sides of length $b$. Assume that $n$ is a multiple of $5$. Partition $n$ points into five groups, each of size  $n/5$, where each group of points is placed inside a disk of sufficiently small radius centered at a vertex of the pentagon.
This construction contains $n^3/25$ triangles $\varepsilon$-congruent to $T$.\\

\noindent
(e) Let $T$ be an arbitrary triangle. Assume that $n$ is a multiple of $3$. Partition $n$ points into three groups, each of size  $n/3$, where each group of points is placed inside a disk of sufficiently small radius centered at a vertex of the triangle. This construction contains $n^3/27$ triangles $\varepsilon$-congruent to $T$.\\

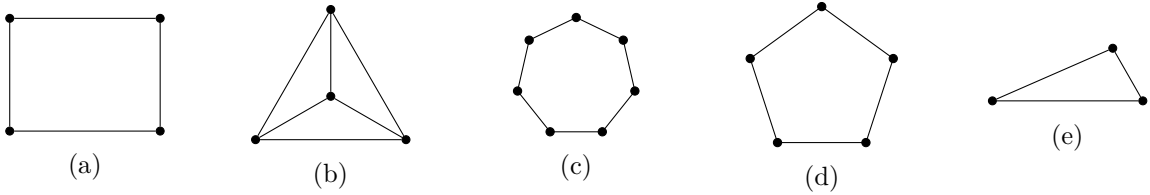
\begin{figure}[h!]
\begin{subfigure}{.19\textwidth}
\centering
    \begin{tikzpicture}[]

\vertex[minimum size = 3pt](0) at (0,0){};
\vertex[minimum size = 3pt](1) at (2,0) {};
\vertex[minimum size = 3pt](2) at (0,1.5) {};
\vertex[minimum size = 3pt](3) at (2,1.5) {};

\draw (0) -- (1);
\draw(0) -- (2);
\draw (3) -- (1);
\draw (3) -- (2);

\end{tikzpicture}
\caption*{(a)}
\end{subfigure}
\begin{subfigure}{.19\textwidth}
\centering
    \begin{tikzpicture}[]

\vertex[minimum size = 3pt](0) at (0,0){};
\vertex[minimum size = 3pt](1) at (2,0) {};
\vertex[minimum size = 3pt](2) at (1,1.732) {};
\vertex[minimum size = 3pt](3) at (1,0.577) {};

\draw (0) -- (1);
\draw(0) -- (2);
\draw (2) -- (1);
\draw (3) -- (0);
\draw (3) -- (1);
\draw (3) -- (2);

\end{tikzpicture}
\caption*{(b)}
\end{subfigure}
\begin{subfigure}{.19\textwidth}
\centering
    \begin{tikzpicture}[]

\draw
\foreach \i in {1,...,7}{
(90+360/7*\i:0.8) coordinate(\i) node[vtx]{}
};

\draw(1) -- (2);
\draw (2) -- (3);
\draw (3) -- (4);
\draw (4) -- (5);
\draw (5) -- (6);
\draw (6) -- (7);
\draw (7) -- (1);

\end{tikzpicture}
\caption*{(c)}
\end{subfigure}
\begin{subfigure}{.19\textwidth}
\centering
    \begin{tikzpicture}[]

\draw
\foreach \i in {1,...,5}{
(90+360/5*\i:1) coordinate(\i) node[vtx]{}
};

\draw(1) -- (2);
\draw (2) -- (3);
\draw (3) -- (4);
\draw (4) -- (5);
\draw (5) -- (1);

\end{tikzpicture}
\caption*{(d)}
\end{subfigure}
\begin{subfigure}{.19\textwidth}
\centering
    \begin{tikzpicture}[]

\vertex[minimum size = 3pt](0) at (0,0){};
\vertex[minimum size = 3pt](1) at (2,0) {};
\vertex[minimum size = 3pt](2) at (1.6,0.7) {};

\draw(0) -- (1);
\draw (0) -- (2);
\draw (1) -- (2);
\end{tikzpicture}
\caption*{(e)}
\end{subfigure}

\caption{Point configurations for the lower bound constructions in Theorem~\ref{upperbounds}. }
\label{Fig:shapes}
\end{figure}

\subsection{A warm-up: Proof of Theorem~\ref{equilaterl} }

For our proofs we will use hypergraph Tur\'an theory for $3$-uniform hypergraphs, named here $3$-graphs. For a good overview on this topic, see the excellent survey of Keevash~\cite{Keevashsurvey}. Let $F$ and $G$ be $3$-graphs. We say that $G$ is $F$-\emph{free} if it does not have a 
subhypergraph isomorphic to $F$. Given a family of $3$-graphs $\mathcal{F}$, we say that $G$ is $\mathcal{F}$-\emph{free} if $G$ is $F$-free for every $F\in\mathcal{F}$. For a positive integer $n$, the \emph{Tur\'an function} of $\mathcal{F}$, denoted by $\textup{ex}(n,\mathcal{F})$, is defined to be the maximum number of edges in an $\mathcal{F}$-free $n$-vertex $3$-graph. The \emph{Tur\'an density} of $\mathcal{F}$ is 
\begin{align*}
\pi(\mathcal{F})=\lim_{n\to \infty} \frac{\textup{ex}(n,\mathcal{F})}{\binom{n}{3}}.
\end{align*}
If $\mathcal{F}=\{H\}$ for a single $3$-graph $H$, we simply write $\textup{ex}(n,H)$ and $\pi(H)$ instead of $\textup{ex}(n,\{H\})$ and $\pi(\{H\})$ respectively. The \emph{link graph} of a $3$-graph $H$ is the graph with vertex set $V(H)$, where a pair of vertices is an edge iff it is contained in some edge in $H$.

Given a triangle $T$ and a finite point set $P\subseteq \mathbb{R}^2$, we denote by $\mathcal{H}(T,P)$ the $3$-graph with vertex set $P$ and edges being those triples forming triangles that are congruent to $T$. Given a triangle $T$, a finite point set $P\subseteq \mathbb{R}^2$ and $\varepsilon>0$, we denote by $\mathcal{H}(T,P,\varepsilon)$ the $3$-graph with vertex set $P$ and edges being those triples forming triangles that are $\varepsilon$-congruent to $T$. We say that a hypergraph $H$ is \emph{forbidden} for a triangle $T$, if there exists $\varepsilon=\varepsilon(T,H)>0$ such that for every point set $P$ of size $|V(H)|$, the $3$-graph $\mathcal{H}(T,P,\varepsilon)$ is $H$-free.

A strategy for our proofs is to find forbidden $3$-graphs for different triangles and then use the bounds for the corresponding hypergraph Tur\'an problem. As a warm-up for the proof of our main result, we present a short proof of Theorem~\ref{equilaterl}.
\begin{proof}
Denote by $T$ the equilateral triangle of unit side-length and let $$s(n):= \left\lfloor\frac{n}{3} \right\rfloor \cdot \left\lfloor\frac{n+1}{3} \right\rfloor\cdot \left\lfloor \frac{n+2}{3} \right\rfloor.$$  A $3$-graph $G$ is  \emph{cancellative}, see~\cite{Bollobascancellative}, if no symmetric difference of two edges of $G$ is  contained in a third edge of $G$. Note that being cancellative is equivalent to $G$ being $\{F_5,K_4^{3-}\}$-free, where $F_5$ is the $3$-graph with vertex set $V(F_5)=[5]$ and edges $123,124,345$, and $K_4^{3-}$ is the $3$-graph with vertex set $[4]$ and edges $123,124,134$, which in the literature is sometimes also called $F_4$.

A classical result of Bollob\'{a}s~\cite{Bollobascancellative} states that the maximum number of edges in a cancellative $3$-graph is $s(n)$, and equality is obtained by the complete balanced $3$-partite $3$-graph. 
With other words, Bollob\'{a}s proved that $\textup{ex}(n,\{F_5,K_4^{3-}\})=s(n)$. Now, let $\varepsilon>0$ be sufficiently small and let $P$ be a point set of size $n$. 
Since the number of triangles that are $\varepsilon$-congruent to $T$ in $P$ equals the number of edges in $\mathcal{H}(T,P,\varepsilon)$, it suffices to show that $F_5$ and $K_4^{3-}$ are forbidden for $T$.

If $F_5$ was not forbidden for $T$, then there would exist five distinct points $p_1,p_2,p_3,p_4,p_5\in\mathbb{R}^2$ such that the triangles $p_1p_2p_3,p_1p_2p_4,p_3p_4p_5$ are $\varepsilon$-congruent to $T$. Then, by the triangle-inequality, the distance between each pair of the four points $p_1,p_2,p_3,p_4$ is between $1-2\varepsilon$ and $1+2\varepsilon$, however this is not possible  for $\varepsilon$ sufficiently small by the following lemma.

\begin{lemma} 
\label{4points}
  Let $a$ be a positive real number and $P$ be a set of $4$ points in the plane where the minimum pairwise distance is at least $a$. Then $\diam(P) \geq \sqrt2 a$. 
\end{lemma}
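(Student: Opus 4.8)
My plan is to argue by contradiction, assuming that four points $P = \{p_1, p_2, p_3, p_4\}$ have all pairwise distances at least $a$ but diameter strictly less than $\sqrt{2}\,a$. The first step is to pass to the convex hull of $P$ and split into two cases depending on whether the hull is a triangle or a quadrilateral. If the hull is a triangle, one point lies inside (or on an edge of) the triangle formed by the other three; then the three angles at that interior point sum to $2\pi$ (or one of them is $\pi$), so at least one of them is at least $2\pi/3 > \pi/2$. If the hull is a quadrilateral, the four interior angles sum to $2\pi$, so at least one of them is at least $\pi/2$. In either case we have produced a triangle with vertices among the $p_i$ containing an angle $\theta \geq \pi/2$.

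The second step is the standard observation that in a triangle with an angle $\theta \geq \pi/2$, the side opposite that angle is the longest, and by the law of cosines its squared length equals $x^2 + y^2 - 2xy\cos\theta \geq x^2 + y^2$, where $x, y$ are the two adjacent side lengths. Since $x, y \geq a$ by hypothesis, the opposite side has length at least $\sqrt{x^2 + y^2} \geq \sqrt{2}\,a$. But that side is a distance between two points of $P$, hence at most $\diam(P) < \sqrt{2}\,a$, a contradiction. (When $\theta = \pi$, i.e. three points are collinear with one strictly between the other two, the outer distance is $x + y \geq 2a \geq \sqrt{2}\,a$, which is even easier.) This completes the argument, and note the bound is tight, witnessed by the unit square.

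The only genuinely delicate point is the case analysis on the convex hull: one must handle degeneracies cleanly — three or more collinear points, a point lying exactly on an edge of the hull, or repeated points (excluded here since the four points are distinct and pairwise separated by $a > 0$). I would dispatch collinearity first as its own easy sub-case (a collinear triple among the $p_i$ immediately gives an outer distance $\geq 2a$), so that afterward I may assume the hull is a genuine triangle or quadrilateral with well-defined positive angles, and the angle-sum argument goes through without fuss.
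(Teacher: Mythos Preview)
Your proof is correct and follows essentially the same approach as the paper: split according to whether the convex hull is a triangle or a quadrilateral, use the angle sum to locate an angle of at least $\pi/2$ (or $2\pi/3$ in the triangle case), and apply the Cosine Law to the side subtending that angle. The paper's version is slightly terser and in the triangle case exploits the full $120^\circ$ to get $\sqrt{3}\,a$, whereas you only use $\theta>\pi/2$ to get $\sqrt{2}\,a$, but the structure of the argument is the same, and you are a bit more careful about degenerate configurations.
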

\begin{proof}
 We distinguish two cases, when $P$ is in convex position, and when it is not.
  In the former case, at least one of the four angles of the quadrilateral is at least $90^\circ$,
  and then by the Cosine Law, the corresponding diagonal that subtends this angles is
  of length at least $\sqrt{a^2 + a^2} =\sqrt2 a$, as required. In the latter case, $\conv(P)$ is a triangle and at least one of the three angles at the interior point
  is at least $120^\circ$. By the Cosine Law, the corresponding triangle side that subtends this angle is
  of length at least $\sqrt{a^2 + a^2 + 2 a^2\cdot \cos{120^\circ}} =\sqrt3 a > \sqrt2 a$, as required.
\end{proof}
If $K_4^{3-}$ was not forbidden for $T$, then  there would exist four distinct points $p_1,p_2,p_3,p_4\in\mathbb{R}^2$ such that the triangles $p_1p_2p_3,p_1p_2p_4,p_1p_3p_4$ are $\varepsilon$-congruent to $T$. Then, the distance between each pair of the four points $p_1,p_2,p_3,p_4$ is between $1-2\varepsilon$ and $1+2\varepsilon$. Again, this is not possible by Lemma~\ref{4points}  for $\varepsilon$ sufficiently small. We conclude $h(n,T)\leq s(n)$. 
\end{proof}
Note that, in fact, every 3-graph $H$, which contains four vertices $v_1,v_2,v_3,v_4\in V(H)$ such that every pair $v_iv_j$ for $1\leq i<j\leq 4$ is contained in some edge in $H$, is forbidden for $T$.
We remark that Frankl and F\"uredi~\cite{F5Frankl} proved that $\textup{ex}(n,F_5)=s(n)$ for $n\geq 3000$, which was extended by Keevash and Mubayi~\cite{MubayiF5} to hold for $n\geq 33$. Thus, for those values of $n$, it would have been sufficient just to use that $F_5$ is forbidden for $T$. 

\section{Forbidden Hypergraphs}
\label{forbidden}
The following hypergraphs will appear as forbidden hypergraphs for some triangles.

\begin{center}
  \setlength{\tabcolsep}{1pt}
  \renewcommand{\arraystretch}{2.0}
  \begin{longtable}{ | l|| c |  c | c|  }
    \hline
    \ $H$ &   Edges   &  Visualization & Name  / Description  \\
    \hline\hline
    \ $K_4^3$ 
    &$123, 124, 134, 234 $
    & 
\vc{\begin{tikzpicture}\outercycle{5}{4}
\draw (x0) node[unlabeled_vertex]{};\draw (x1) node[unlabeled_vertex]{};\draw (x2) node[unlabeled_vertex]{};\draw (x3) node[unlabeled_vertex]{};
\drawhyperedge{0}{4}
\drawhypervertex{0}{0}
\drawhypervertex{1}{0}
\drawhypervertex{2}{0}
\drawhyperedge{1}{4}
\drawhypervertex{0}{1}
\drawhypervertex{1}{1}
\drawhypervertex{3}{1}
\drawhyperedge{2}{4}
\drawhypervertex{0}{2}
\drawhypervertex{2}{2}
\drawhypervertex{3}{2}
\drawhyperedge{3}{4}
\drawhypervertex{1}{3}
\drawhypervertex{2}{3}
\drawhypervertex{3}{3}
\end{tikzpicture} 
}    & Complete $3$-graph on four vertices
   \\
\hline
    \ $K_4^{3-}$ 
    &$123, 124, 134$
    &
\vc{\begin{tikzpicture}\outercycle{5}{4}
\draw (x0) node[unlabeled_vertex]{};\draw (x1) node[unlabeled_vertex]{};\draw (x2) node[unlabeled_vertex]{};\draw (x3) node[unlabeled_vertex]{};
\drawhyperedge{0}{4}
\drawhypervertex{0}{0}
\drawhypervertex{1}{0}
\drawhypervertex{2}{0}
\drawhyperedge{1}{4}
\drawhypervertex{0}{1}
\drawhypervertex{1}{1}
\drawhypervertex{3}{1}
\drawhyperedge{2}{4}
\drawhypervertex{0}{2}
\drawhypervertex{2}{2}
\drawhypervertex{3}{2}
\path (0,0.4) -- (0,-1.1); 
\end{tikzpicture} 
}   
& \makecell{\ Complete $3$-graph minus one edge  \\ on 4 vertices  \\ }

   \\
   \hline
\ $F_{3,2}$ & $123, 145, 245, 345$
&
\vc{\begin{tikzpicture}\outercycle{6}{5}
\draw (x0) node[unlabeled_vertex]{};\draw (x1) node[unlabeled_vertex]{};\draw (x2) node[unlabeled_vertex]{};\draw (x3) node[unlabeled_vertex]{};\draw (x4) node[unlabeled_vertex]{};
\drawhyperedge{0}{5}
\drawhypervertex{0}{0}
\drawhypervertex{1}{0}
\drawhypervertex{2}{0}
\drawhyperedge{1}{5}
\drawhypervertex{0}{1}
\drawhypervertex{3}{1}
\drawhypervertex{4}{1}
\drawhyperedge{2}{5}
\drawhypervertex{1}{2}
\drawhypervertex{3}{2}
\drawhypervertex{4}{2}
\drawhyperedge{3}{5}
\drawhypervertex{2}{3}
\drawhypervertex{3}{3}
\drawhypervertex{4}{3}
\end{tikzpicture} 
}
& \makecell{\ $F_{3,2}$-free $3$-graphs are called $3$-graphs \ \\ with independent  neighborhood}
   \\
   \hline
   \ $J_4$
   &
   $123, 124, 125, 134, 135, 145$
   &
\vc{\begin{tikzpicture}\outercycle{6}{5}
\draw (x0) node[unlabeled_vertex]{};\draw (x1) node[unlabeled_vertex]{};\draw (x2) node[unlabeled_vertex]{};\draw (x3) node[unlabeled_vertex]{};\draw (x4) node[unlabeled_vertex]{};
\drawhyperedge{0}{5}
\drawhypervertex{0}{0}
\drawhypervertex{1}{0}
\drawhypervertex{2}{0}
\drawhyperedge{1}{5}
\drawhypervertex{0}{1}
\drawhypervertex{1}{1}
\drawhypervertex{3}{1}
\drawhyperedge{2}{5}
\drawhypervertex{0}{2}
\drawhypervertex{1}{2}
\drawhypervertex{4}{2}
\drawhyperedge{3}{5}
\drawhypervertex{0}{3}
\drawhypervertex{2}{3}
\drawhypervertex{3}{3}
\drawhyperedge{4}{5}
\drawhypervertex{0}{4}
\drawhypervertex{2}{4}
\drawhypervertex{4}{4}
\drawhyperedge{5}{5}
\drawhypervertex{0}{5}
\drawhypervertex{3}{5}
\drawhypervertex{4}{5}
\end{tikzpicture} 
}   
&  \makecell{$5$-vertex $3$-graph with all triples \\ containing one  fixed vertex}
   \\ 
   \hline
\ $F_5$
&
$123, 124, 345$
&
\vc{\begin{tikzpicture}\outercycle{6}{5}
\draw (x0) node[unlabeled_vertex]{};\draw (x1) node[unlabeled_vertex]{};\draw (x2) node[unlabeled_vertex]{};\draw (x3) node[unlabeled_vertex]{};\draw (x4) node[unlabeled_vertex]{};
\drawhyperedge{0}{5}
\drawhypervertex{0}{0}
\drawhypervertex{1}{0}
\drawhypervertex{2}{0}
\drawhyperedge{1}{5}
\drawhypervertex{0}{1}
\drawhypervertex{1}{1}
\drawhypervertex{3}{1}
\drawhyperedge{2}{5}
\drawhypervertex{2}{2}
\drawhypervertex{3}{2}
\drawhypervertex{4}{2}
\end{tikzpicture} 
}
& Generalized triangle
   \\
   \hline
\ $C_5$
&
$123, 234, 345, 145, 125$
&
\vc{\begin{tikzpicture}\outercycle{6}{5}
\draw (x0) node[unlabeled_vertex]{};\draw (x1) node[unlabeled_vertex]{};\draw (x2) node[unlabeled_vertex]{};\draw (x3) node[unlabeled_vertex]{};\draw (x4) node[unlabeled_vertex]{};
\drawhyperedge{0}{5}
\drawhypervertex{0}{0}
\drawhypervertex{1}{0}
\drawhypervertex{2}{0}
\drawhyperedge{1}{5}
\drawhypervertex{1}{1}
\drawhypervertex{2}{1}
\drawhypervertex{3}{1}
\drawhyperedge{2}{5}
\drawhypervertex{2}{2}
\drawhypervertex{3}{2}
\drawhypervertex{4}{2}
\drawhyperedge{3}{5}
\drawhypervertex{0}{3}
\drawhypervertex{3}{3}
\drawhypervertex{4}{3}
\drawhyperedge{4}{5}
\drawhypervertex{0}{4}
\drawhypervertex{1}{4}
\drawhypervertex{4}{4}
\end{tikzpicture} 
} & Tight cycle of length 5
   \\
   \hline  
    \caption{\label{tab:pic}Description of some $3$-graphs. 
    }
    \label{listofforb}
  \end{longtable}
\end{center}
\vspace{-0.5cm}
For an overview of bounds on Tur\'an densities of these and other hypergraphs, see \cite{MR4421399}.

We say that a hypergraph $H$ with vertex set $[k]$ is \emph{exactly forbidden} for a triangle $T$ iff there do not exist $k$ points $p_1,\ldots,p_k\in \mathbb{R}^2$ (not necessarily distinct) such that for every edge $xyz\in E(H)$, the triangle $p_xp_yp_z$ is congruent to $T$. We call a $3$-graph $H$ on $k$ vertices $\emph{dense}$ if there exists a vertex ordering $v_1,v_2,\ldots, v_{k}$ such that for every $3\leq i \leq k $ there exists at least one edge $e_i\in E(H[\{v_1,\ldots,v_i\}])$ containing $v_i$. Note that all hypergraphs in Table~\ref{listofforb} are dense; indeed for $F_{3,2}$ consider the vertex ordering $4,5,1,2,3$, and for all other hypergraphs simply consider the canonical vertex ordering.

Given a point $p\in \mathbb{R}^2$ and some $\varepsilon>0$, we denote by $B_\varepsilon(p)$ the closed ball (disk) of radius $\varepsilon$ centered at $p$, i.e., $B_\varepsilon(p)$ is the set of points $p'\in \mathbb{R}^2$ such that $|pp'|\leq \varepsilon$. Recall that a hypergraph $H$ is \emph{forbidden} for a triangle $T$, if there exists an $\varepsilon=\varepsilon(T,H)>0$ such that for every point set $P$ of size $|V(H)|$, the $3$-graph $\mathcal{H}(T,P,\varepsilon)$ is $H$-free. The following lemma shows that the two different notions of forbidden hypergraphs are essentially the same. Later, it will more convenient to work with the definition of exactly forbidden hypergraphs.
\begin{lemma}
\label{exactforbidden}
Let $H$ be a dense hypergraph and $T$ a triangle. 
A hypergraph $H$ is forbidden for $T$ iff $H$ is exactly forbidden for $T$.
\end{lemma}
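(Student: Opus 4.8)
The statement is an "iff" between two notions. One direction is essentially trivial: if $H$ is exactly forbidden for $T$, then $H$ is forbidden for $T$. Indeed, suppose not; then for every $\varepsilon>0$ there is a point set $P_\varepsilon$ of size $k=|V(H)|$ such that $\mathcal{H}(T,P_\varepsilon,\varepsilon)$ contains a copy of $H$. Taking a sequence $\varepsilon_m\to 0$ and using that an $\varepsilon$-congruent triangle has all three side lengths within an additive $O(\varepsilon)$ of those of $T$, I would rescale/translate so that (say) one fixed edge of the copy of $H$ has its two endpoints pinned, then pass to a convergent subsequence of the $k$-tuples of points (the relevant configurations lie in a compact region once normalized). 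The limit $k$-tuple $p_1,\dots,p_k$ has the property that for every $xyz\in E(H)$ the triangle $p_xp_yp_z$ is congruent to $T$ — contradicting exact forbiddenness. Here the points $p_i$ need not be distinct, which is exactly why the definition of exactly forbidden allows coincidences.

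**The substantive direction** is: if $H$ is forbidden for $T$, then $H$ is exactly forbidden for $T$. Equivalently, contrapositive: if there exist (not necessarily distinct) points $p_1,\dots,p_k$ with $p_xp_yp_z\cong T$ for all $xyz\in E(H)$, then for every $\varepsilon>0$ there is a point set $P$ of size exactly $k$ with $\mathcal{H}(T,P,\varepsilon)\supseteq H$. The obstacle is that the witnessing points $p_1,\dots,p_k$ may coincide, whereas a point set $P$ of size $k$ must have $k$ distinct points; a small generic perturbation of the $p_i$ destroys exact congruence, and one has to check it does so by less than the slack $\varepsilon'=\varepsilon\cdot\min\{a,b,c\}$ allowed in the definition of $\varepsilon$-congruence. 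This is where the hypothesis that $H$ is \emph{dense} is used. The plan is: fix a vertex ordering $v_1,\dots,v_k$ witnessing density, so that each $v_i$ (for $i\ge 3$) lies in an edge $e_i$ of $H[\{v_1,\dots,v_i\}]$. I would build the perturbed point set greedily in this order. Having placed distinct points $q_1,\dots,q_{i-1}$ with $q_{v_a}q_{v_b}q_{v_c}$ $\delta$-close to congruent to $T$ for every already-present edge (for a $\delta$ to be fixed at the end), I place $q_i$ as follows: the edge $e_i=\{v_a,v_b,v_i\}$ with $a,b<i$ forces $q_i$ to lie near one of the (at most two) points completing the triangle $q_{v_a}q_{v_b}q_i\cong T$; since $q_{v_a},q_{v_b}$ are already fixed and distinct and close to $p_{v_a},p_{v_b}$, that locus is nonempty and within $O(\delta)$ of $p_{v_i}$. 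Any point in a sufficiently small neighborhood of such a completing point is within an arbitrarily small additive error of making all the congruences involving $v_i$ hold; and within that neighborhood I can pick $q_i$ distinct from $q_1,\dots,q_{i-1}$ (a neighborhood is infinite, a finite set is not). Iterating, the accumulated error over the $k$ steps is a finite sum of quantities each of which I can make as small as I like, so it stays below $\varepsilon'$; hence $q_1,\dots,q_k$ are distinct and $\mathcal{H}(T,\{q_1,\dots,q_k\},\varepsilon)\supseteq H$, contradicting forbiddenness.

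**Main obstacle and how I would handle it.** The delicate point is the bookkeeping of errors: when I perturb $q_{v_a},q_{v_b}$ (already chosen, slightly off their target positions) and then choose $q_i$ near a completing vertex, the triangle $q_{v_a}q_{v_b}q_i$ is congruent not to $T$ but to a triangle close to $T$, and I need the cumulative effect on \emph{all} edges of $H$ (not just the $e_i$'s) to stay within $\varepsilon'$. I would handle this by a standard $\varepsilon/k$-type argument: choose a tolerance $\delta>0$ small enough (depending on $T$, $H$, and the finite configuration $p_1,\dots,p_k$) that (i) $\delta k < \varepsilon'$ with room to spare, and (ii) whenever $\|q_{v}-p_{v}\|<\delta$ for all $v$, every triple $q_{v_a}q_{v_b}q_{v_c}$ with $v_av_bv_c\in E(H)$ is $\varepsilon$-congruent to $T$ — this uses continuity of the map from point triples to side-length vectors together with the fact that $p_{v_a}p_{v_b}p_{v_c}\cong T$ exactly. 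Then I only need each greedy step to move a point by less than $\delta$ from its target $p_{v_i}$, which the argument above guarantees, and distinctness is free because at each step I am selecting from an open neighborhood. One should also note the edge case $p_{v_a}=p_{v_b}$: if two endpoints of an edge of $H$ coincide in the witnessing configuration, that edge would be a degenerate triangle, impossible if $T$ is nondegenerate — so in fact distinct endpoints for every edge is automatic, and the only coincidences among the $p_i$ are between vertices not sharing an edge, which is precisely the situation the greedy perturbation resolves.
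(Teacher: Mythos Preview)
Your argument is essentially correct, but you have the two directions swapped in terms of where the work lies and where density is used.

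The direction you call ``substantive'' (not exactly forbidden $\Rightarrow$ not forbidden) is in fact immediate from the definition, and density plays no role there. By the definition of $\varepsilon$-congruence, if $p_xp_yp_z$ is congruent to $T$ and each $p_i'\in B_{\varepsilon'}(p_i)$, then $p_x'p_y'p_z'$ is automatically $\varepsilon$-congruent to $T$. So one simply chooses, for each $i$, a point $p_i'\in B_{\varepsilon'}(p_i)$ so that $p_1',\dots,p_k'$ are pairwise distinct (trivially possible since each ball is infinite), and these $k$ distinct points witness that $H$ is not forbidden. This is exactly the paper's one-paragraph argument; no greedy construction, no error bookkeeping, no density hypothesis.

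Conversely, the direction you call ``essentially trivial'' (not forbidden $\Rightarrow$ not exactly forbidden) is where the paper does the real work and where density is genuinely needed. Your compactness sketch is fine in spirit, but the assertion that ``the relevant configurations lie in a compact region once normalized'' requires justification: after pinning the first two points, why are the remaining $k-2$ points bounded? This is precisely what density supplies --- ordering the vertices so that each $v_i$ ($i\ge 3$) lies in an edge with two earlier vertices forces $p_{v_i}$ to lie within $\max\{a,b,c\}+O(\varepsilon)$ of an already-placed point, so inductively all points stay in a fixed compact set. The paper makes this explicit via its iterative ``$i$-partial sequence'' construction, pinning down one point at a time and passing to subsequences; your single Bolzano--Weierstrass step on the whole $k$-tuple is a legitimate alternative, but only once boundedness is established via the density assumption you placed in the wrong half of the proof.
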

\begin{proof}
Let $H$ be a dense $3$-graph on vertex set $[k]$ and let $T$ be a triangle with side lengths $a,b,c$.

If $H$ is not exactly forbidden for $T$, then there exists $p_1,\ldots,p_k\in \mathbb{R}^2$ (not necessarily distinct) such that for every edge $xyz\in E(H)$, the triangle $p_xp_yp_z$ is congruent to $T$. Let $\varepsilon>0$ and set $\varepsilon'=\varepsilon\cdot \min\{a,b,c\}$. Then we can choose $p_i'\in B_{\varepsilon'}(p_i)$ for all $i\in[k]$ such that all points $p_1',\ldots, p_k'\in \mathbb{R}^2$ are distinct. Now, if $xyz\in E(H)$, then the triangle $p_x'p_y'p_z'$ is $\varepsilon$-congruent to $T$. Thus, $H$ is not forbidden for $T$.

For the other direction, assume that $H$ is not forbidden for $T$.  
For $0\leq i \leq k$, a sequence $(v^n)_n$ of vectors of $k$ points $v^n=(v^n_1,\ldots,v^n_k)\in (\mathbb{R}^2)^k$ is called $i$-\emph{partial}, if there exists a sequence of real numbers $\varepsilon_n \downarrow 0$ such that $v^n_xv^n_yv^n_z$ is $\varepsilon_n$-congruent to $T$ for $xyz\in E(H)$ and all $n\in  \mathbb{N}$, and further $v^n_j=v^{1}_j$ for all $n\in  \mathbb{N}$ and $j\leq i$.  

\begin{claim}
There exists a $k$-partial sequence. 
\end{claim}
\begin{proof}
We construct the $k$-partial sequence iteratively.  
Because $H$ is not forbidden for $T$, for every $\varepsilon>0$ there exists points $p_1,\ldots,p_k\in \mathbb{R}^2$ such that $p_{x}p_{y}p_{z}$ forms a triangle that is $\varepsilon$-congruent to $T$ if $xyz\in E(H)$. In particular, there exists a $0$-partial sequence. After shifting, we can assume that the first point in each vector is the origin, and thus we get a $1$-partial sequence. 

Because $H$ is dense, $123\in E(H)$. Since $123\in E(H)$ and after rotating around the origin, we can assume that the second point $v_n^2$ in each vector is on the $x$-axis with positive $x$-coordinate and has distance at most $2\varepsilon_n \cdot \min\{a,b,c\}$ from one of the points $(a,0),(b,0),(c,0)$. After replacing the second point in each vector with a point from $(a,0),(b,0),(c,0)$ which it is the closest to, we get a 1-partial sequence, where the second point takes one out of three values. 
Going over to a subsequence where the second point is constant, we obtain a $2$-partial sequence. 

Now, assume that for some $2\leq i \leq k-1$ there exists an $i$-partial sequence $(v^n)_n$. We will construct an $(i+1)$-partial sequence from it. Since $H$ is dense, there exists an edge $e_{i+1}=xy(i+1)\in E(H)$ where $x,y\in [i]$. Observe that the distance $|v_x^1v_y^1|$ is $a,b$ or $c$, because $v_x^1v_y^1v_{i+1}^n$ is $\varepsilon_n$-congruent for every $n$ and $\varepsilon_n \downarrow 0$. There are at most $4$ points $p\in \mathbb{R}^2$ such that $v^1_xv^1_yp$ is congruent to $T$. After replacing each $v^n_{i+1}$ with a point among those $4$ points it is the closest to, and going over to a subsequence where the second point is constant, we obtain an $(i+1)$-partial sequence. 
\end{proof}
Let $(v^n)_n$ be a $k$-partial sequence. By definition, $v_j^n=v_j^1$ for all $n\in \mathbb{N}$ and $j\in [k]$. The points $v_1^1,\ldots,v_k^1\in \mathbb{R}^2$ have the property that if $xyz\in E(H)$, then $v_x^1v_y^1v_z^1$ is $\varepsilon_n$-congruent to $T$ for all $n$ for some $\varepsilon_n\downarrow 0$, and thus also congruent to $T$. We conclude that $H$ is not exactly forbidden for $T$. 
\end{proof}

\subsection{Elementary geometry}
In this subsection we collect facts and results from elementary Euclidean geometry. For $P \subseteq \mathbb{R}^2$, let $A(P) := \{|xy|: x,y\in P,  x\neq y\}$. The set $P$ is  an $s$-\emph{distance set} if $|A(P)| = s$. An $s$-\emph{distance set} $P$ is  \emph{maximal}, if it is maximal with respect to inclusion. Two subsets in $\mathbb{R}^2$ are \emph{isomorphic} if there exists a similarity transformation from one to the other.
\begin{lemma}\label{1distance}
A 1-distance set has size at most $3$. 
\end{lemma}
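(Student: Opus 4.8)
The plan is to derive a contradiction from the assumption that a $1$-distance set $P$ contains four points, and the quickest route is to reuse Lemma~\ref{4points}, which is already available. So first I would suppose, for contradiction, that $|P|\geq 4$ and pick any four points $Q=\{p_1,p_2,p_3,p_4\}\subseteq P$. Let $a>0$ be the common pairwise distance in $P$ (which is well-defined since $|A(P)|=1$); in particular every pair among $p_1,\dots,p_4$ is at distance exactly $a$.

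Next, I would observe that $Q$ has minimum pairwise distance $a$, so Lemma~\ref{4points} applies and yields $\diam(Q)\geq \sqrt2\,a$. But $\diam(Q)=a$ since all six pairwise distances equal $a$, giving $a\geq\sqrt2\,a$, which is impossible for $a>0$. Hence no such $Q$ exists, i.e.\ $|P|\leq 3$. (If one prefers a self-contained argument instead of invoking Lemma~\ref{4points}: the points $p_1,p_2,p_3$ form an equilateral triangle of side $a$, and a fourth point equidistant at distance $a$ from all three would have to lie on the intersection of the three circles of radius $a$ about $p_1,p_2,p_3$; the circles about $p_1,p_2$ already meet only in $p_3$ and the reflection $p_3'$ of $p_3$ across the line $p_1p_2$, and $|p_3p_3'|=\sqrt3\,a\neq a$, so $p_3'$ fails the third constraint.)

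Finally I would note sharpness: the three vertices of an equilateral triangle form a $1$-distance set of size $3$ after rescaling, so the bound $3$ is attained. There is essentially no obstacle here — the only thing to be careful about is that the four chosen points are genuinely distinct (so that "minimum pairwise distance $a$" is meaningful), which is immediate since $P$ is a set.
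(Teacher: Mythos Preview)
Your argument is correct. The paper actually states Lemma~\ref{1distance} without proof, treating it as an elementary fact, so there is nothing to compare against; your use of Lemma~\ref{4points} (which is proved earlier and independently) is legitimate, and your parenthetical self-contained alternative is also fine.
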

\begin{lemma}[Erd\H{o}s, Kelly~\cite{MR1526679}]
\label{2distances}
A 2-distance set has size at most $5$. Further, if $P$ is a $2$-distance set of size $5$, then $P$ is a regular pentagon.
\end{lemma}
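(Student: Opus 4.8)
The final statement to prove is Lemma~\ref{2distances}: a 2-distance set in the plane has size at most 5, and equality forces a regular pentagon.

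\medskip

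\noindent\textbf{Proof proposal.}

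The plan is to argue by a case analysis on how the points in a 2-distance set $P$ (with distance values $\{r,s\}$, $r<s$) can be configured, combined with a counting argument that bounds $|P|$. First I would recall the standard \emph{graph-theoretic} setup: form the graph $G$ on vertex set $P$ whose edges are the pairs at distance $r$ (the "short" pairs); the complement $\bar G$ records the pairs at distance $s$. For each point $p\in P$, its short-neighborhood $N_r(p)$ lies on a circle of radius $r$ about $p$, and its $r$-distances to each other are each $r$ or $s$; since $s<2r$ is not automatic, I would split into the subcase $s<2r$ (where $N_r(p)$ is itself a subset of a 1-distance-or-2-distance arrangement on a circle) and $s\ge 2r$ (which quickly becomes very restrictive). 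The key local observation is that for any $p$, the points of $N_r(p)$ form a 2-distance set on a circle of radius $r$; points on a circle at two mutual distances can be analyzed via the inscribed chord lengths, and one shows $|N_r(p)|\le 3$ unless the configuration is a regular pentagon's worth of vertices — in which case we are essentially done. Symmetrically $|N_s(p)|$ is controlled. Since the two neighborhoods partition $P\setminus\{p\}$, bounding each by a small constant bounds $|P|$.

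The cleanest route to the constant is probably a degree-counting / eigenvalue-free combinatorial argument: assume $|P|\ge 6$ and derive a contradiction. For each $p$, write $d_r(p)=|N_r(p)|$, $d_s(p)=|N_s(p)|$, so $d_r(p)+d_s(p)=|P|-1\ge 5$. Using Lemma~\ref{1distance} (a 1-distance set has $\le 3$ points) applied to equilateral substructures, and elementary planar geometry (a circle of radius $r$ contains at most a bounded number of points that are pairwise at distance $r$ or $s$), I would establish that $d_r(p)\le 3$ for all $p$, with $d_r(p)=3$ only when those three neighbors form a specific rigid pattern. The same bound holds for $d_s$ by scaling the roles of the two distances (note $s/r$ ratio arguments must be handled: if $d_s(p)\ge 4$ then four points on a circle of radius $s$ at mutual distances in $\{r,s\}$ — again a small finite check). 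Then $|P|-1 = d_r(p)+d_s(p)\le 6$, giving $|P|\le 7$; to push from $7$ down to $5$ one examines the extremal cases $d_r=d_s=3$ globally, i.e. $G$ is $3$-regular on (at most) $7$ vertices, and shows the only geometrically realizable possibility with all pairwise distances in $\{r,s\}$ has $|P|=5$ and is the regular pentagon (the $7$- and $6$-point candidates like $K_{3,3}$-type or prism-type distance graphs fail to embed isometrically in $\mathbb R^2$ with only two distances — one can rule these out by a short coordinate computation or by invoking that the relevant graphs are not realizable). Finally, for the characterization: if $|P|=5$, the distance graph $G$ is a $2$-regular graph on $5$ vertices hence either $C_5$ or $C_3\sqcup C_2$; the latter is impossible (a $C_2$ isn't a thing / a triangle plus an edge with only two distances forces coincidences), so $G=C_5$, and a $5$-cycle in which consecutive vertices are at distance $r$ and non-consecutive at distance $s$ is, by a direct rigidity argument, exactly a regular pentagon.

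\medskip

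The main obstacle I anticipate is \emph{ruling out the intermediate sizes $6$ and $7$} cleanly. The crude local bound $d_r,d_s\le 3$ only yields $|P|\le 7$, so the heart of the proof is showing that no 2-distance set of size $6$ or $7$ exists in the plane. I would handle this by the following sharper argument: if $|P|\ge 6$ then by pigeonhole some point $p$ has $d_r(p)\ge 3$; the $\ge 3$ short-neighbors lie on a radius-$r$ circle and are pairwise at distance $r$ or $s$, which after normalization pins down the angular positions to finitely many options (essentially, arcs subtending chord-length $r$ or $s$); one then checks each option cannot be extended to $6$ points without creating a third distance. This is a finite but slightly fiddly case check — the kind of thing that is "elementary" but where one must be careful not to miss a case (for example the configurations built from a regular pentagon plus its center, which is a $3$-distance set, or two equilateral triangles sharing structure). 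Since the lemma is attributed to Erdős and Kelly, I would also be prepared to simply cite~\cite{MR1526679} for the hardest case distinctions and only reproduce the pentagon-characterization part in detail, as that is what is actually used later in the paper.
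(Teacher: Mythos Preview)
The paper does not give its own proof of this lemma: it is stated with a citation to Erd\H{o}s and Kelly~\cite{MR1526679} and then used as a black box. So there is nothing in the paper to compare your attempt against, and your closing suggestion --- simply cite~\cite{MR1526679} --- is exactly what the paper does.

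That said, as a standalone sketch your argument has real gaps at the two places you flag as delicate. First, the local bound $d_r(p)\le 3$ is asserted but not established: points on a circle of radius $r$ with pairwise distances in $\{r,s\}$ can number more than three (e.g.\ four vertices of a square, or five of a regular pentagon, are $2$-distance sets on a circle), so you genuinely need to use that one of the two allowed distances equals the radius $r$ itself, and this is where the case analysis bites. Second, in the $|P|=5$ characterization you jump to ``the short-distance graph $G$ is $2$-regular'', but nothing you have written forces regularity of $G$; that $G\cong C_5$ is precisely the content of the uniqueness claim, not a hypothesis you may assume. Both issues are repairable by the kind of finite case-check you describe, but as written the proposal is an outline rather than a proof.
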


\begin{theo}[Shinohara~\cite{MR2083454}]
\label{3distances}
A 3-distance set has size at most $7$. If $P$ is a 3-distance set of size $7$, then $P$ is a regular $7$-gon, or a regular $6$-gon together with its center, see Figure~\ref{Fig:shapes2}. 
\end{theo}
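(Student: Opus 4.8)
The plan is to analyze a planar $3$-distance set $P$ through the concentric circles that a single one of its points induces, and to reduce everything to an easy counting bound for \emph{concyclic} few-distance sets. Throughout write $A(P)=\{d_1,d_2,d_3\}$ for the three distances.

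\textbf{A bound for concyclic sets.} I would first isolate the following as a lemma: a concyclic $s$-distance set has at most $2s+1$ points, and for $s=3$ a concyclic set of $7$ points is a regular $7$-gon. For the bound, fix $m$ points on a circle and a distance value $d$; the points at distance exactly $d$ from a given one lie on the intersection of two distinct circles, so there are at most two of them. Hence the graph of pairs realizing $d$ has maximum degree $2$ and thus at most $m$ edges; summing over the $s$ distance classes gives $\binom{m}{2}\le sm$, i.e. $m\le 2s+1$. For the equality case $s=3$, $m=7$, the bound is tight everywhere, so each of the three distance graphs is $2$-regular on $7$ vertices, hence a disjoint union of cycles (a $7$-cycle, or a triangle plus a $4$-cycle). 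A short arc-length analysis — a chord of a fixed circle determines the subtended arc up to reflection, positive arcs summing to $2\pi$ cannot duplicate the larger preimage, and four points cannot be mutually equidistant — rules out every decomposition except the rotationally symmetric one, forcing the $7$-cycle of shortest chords to be the sides of a regular $7$-gon.

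\textbf{From concyclic to general.} Now take an arbitrary $P$ and a point $O\in P$; the other points lie on circles $C_1,C_2,C_3$ of radii $d_1,d_2,d_3$ about $O$, with $m_i$ points on $C_i$ and $|P|=1+m_1+m_2+m_3$. The points on each $C_i$ form a concyclic set whose pairwise distances lie in $\{d_1,d_2,d_3\}$, and $O$ is equidistant from all of them. Call $O$ a \emph{center} if exactly one $m_i$ is nonzero. If $O$ is a center, then $P\setminus\{O\}$ is a concyclic $3$-distance set, so $|P|\le 8$; and $|P|=8$ would put a point equidistant from the vertices of a regular $7$-gon at one of its chord lengths, which is impossible since the only such point is the circumcenter and the circumradius is not among the heptagon's three chord lengths. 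Thus $|P|\le 7$ when $O$ is a center, and for $|P|=7$ one checks that six concyclic points admitting an equidistant point at one of their (at most three) mutual distances must be a regular hexagon — the hexagon-plus-center configuration. If no point of $P$ is a center, I would run a case analysis on the profile $(m_1,m_2,m_3)$: Lemma~\ref{1distance} and Lemma~\ref{2distances} bound any $C_i$ whose induced distances number at most two (size $\le 5$, regular pentagon tight) or at most one (size $\le 3$); each point of $C_i$ has at most two points of any other $C_j$ at a prescribed distance; and combining these local constraints with $\sum_i m_i=|P|-1$ should exclude $|P|\ge 8$ and, for $|P|=7$, force all points concyclic, whence the regular heptagon by the lemma above.

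\textbf{Main obstacle.} The delicate part is the non-center case: the three radii and the chord lengths realized on the three circles satisfy interlocking trigonometric identities, and several near-extremal sporadic configurations — small $2$-distance sets with partial symmetry sitting on a single circle, asymmetric concyclic $3$-distance sextuples, and mixed profiles such as $(m_1,m_2,m_3)=(3,3,1)$ or $(2,2,3)$ — must each be excluded by an explicit angle computation. The equality analysis in the concyclic lemma is the other computational pressure point: while $\binom{8}{2}>3\cdot 8$ kills eight concyclic points immediately, pinning the seven-point case to the regular heptagon requires disposing of every way of partitioning $E(K_7)$ into three $2$-regular distance graphs other than the symmetric one. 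I would therefore prove the concyclic lemma in full first, and then treat the center and non-center cases of the general statement separately.
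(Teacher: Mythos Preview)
The paper does not prove this theorem at all: it is quoted verbatim from Shinohara~\cite{MR2083454} and used as a black box in Lemma~\ref{lagrangian2} and Lemma~\ref{mosttriangles}. There is therefore no ``paper's own proof'' to compare your proposal against; you are attempting something the authors deliberately outsourced.

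As for the proposal itself: your concyclic lemma is correct and its proof is clean --- on a fixed circle each distance class is a graph of maximum degree $\le 2$, giving $\binom{m}{2}\le sm$. The center case is also essentially fine. The real content of Shinohara's theorem, however, is exactly the part you flag as the ``main obstacle'': when no point of $P$ sees all the others on a single circle, the profile analysis on $(m_1,m_2,m_3)$ is not a short exercise. The constraints you list (Lemmas~\ref{1distance} and~\ref{2distances}, and the two-point intersection bound between circles) are not by themselves strong enough to kill $|P|=8$; for instance a profile like $(3,3,1)$ passes all your stated numerical bounds, and ruling it out genuinely requires the ``interlocking trigonometric identities'' you allude to. Shinohara's paper handles this by an exhaustive and fairly lengthy classification, and your sketch does not yet contain the mechanism that replaces it. Likewise, in the center case with $|P|=7$, the claim that six concyclic points with a common equidistant neighbour must form a regular hexagon is not immediate --- there are several concyclic $3$-distance sextuples, and you would need to check each against the extra equidistance constraint.

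In short: the outline is sensible and the concyclic lemma is a good anchor, but what you have is a reduction to a hard case analysis rather than a proof, and that case analysis is precisely the substance of the cited reference.
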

\vspace{-0.4cm}
\begin{figure}[h!]
\begin{subfigure}{.5\textwidth}
\centering
    \begin{tikzpicture}[]

\draw
\foreach \i in {1,...,7}{
(90+360/7*\i:0.8) coordinate(\i) node[vtx]{}
};

\draw(1) -- (2);
\draw (2) -- (3);
\draw (3) -- (4);
\draw (4) -- (5);
\draw (5) -- (6);
\draw (6) -- (7);
\draw (7) -- (1);

\end{tikzpicture}

\end{subfigure}
\begin{subfigure}{.5\textwidth}
\centering
    \begin{tikzpicture}[]

\draw
\foreach \i in {1,...,6}{
(90+360/6*\i:0.8) coordinate(\i) node[vtx]{}
};

\vertex[minimum size = 3pt](0) at (0,0){};

\draw(1) -- (2);
\draw (2) -- (3);
\draw (3) -- (4);
\draw (4) -- (5);
\draw (5) -- (6);
\draw (6) -- (7);
\draw (7) -- (1);

\end{tikzpicture}

\end{subfigure}

\caption{$3$-distance sets of size $7$.}
\label{Fig:shapes2}
\end{figure}
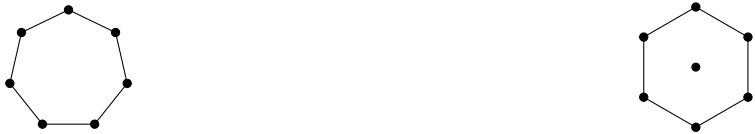
\begin{theo}[Shinohara~\cite{MR2083454}]
\label{3distances6}
Let $P$ be a maximal $3$-distance set of size $6$ such that $A(P)=\{1,b,c\}$, where $1<b< c$. Set $\gamma :=\sqrt{2+\sqrt{3}}$ and  $\tau:= \frac{1+\sqrt{5}}{2}$. Then $$(b,c)\in \left\{(\sqrt{3},2),(2\sin \frac{\pi}{5},2\tau \sin \frac{\pi}{5}),(\sqrt{2},\gamma),(\gamma,\sqrt{2}\gamma)\right\}.$$
\begin{itemize}
\item If $(b,c)=(\sqrt{3},2)$, then $P$ forms an equilateral triangle of side length 2 together with the midpoints of the three sides (see Figure~\ref{Fig:shapes3} (a)).

\item If $(b,c)=(2\sin \frac{\pi}{5},2\tau \sin \frac{\pi}{5})$, then $P$ is a regular 5-gon together with its  center, see Figure~\ref{Fig:shapes3} (b).

\item If $(b,c)=(\gamma,\sqrt{2}\gamma)$, then $P$ is isomorphic to one of the point sets in Figure~\ref{Fig:shapes3} (c) and (d)).

\item If $(b,c)=(\sqrt{2},\gamma)$, then $P$ is isomorphic to one of the point sets in Figure~\ref{Fig:shapes3} (e) and (f)).

\end{itemize}
\end{theo}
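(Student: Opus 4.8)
The plan is to carry out a structured case analysis on the three distance classes. Scale so that the minimum distance equals $1$, so $A(P)=\{1,b,c\}$ with $1<b<c$, and partition the fifteen pairs of $P$ into graphs $G_1,G_b,G_c$ on vertex set $P$, where $G_d$ consists of the pairs at distance $d$. The first structural observations are: each $G_d$ is $K_4$-free, since a monochromatic $K_4$ would be a four-point $1$-distance set, impossible by Lemma~\ref{1distance}; any monochromatic triangle is equilateral; the neighbourhood of a vertex $p$ in $G_d$ lies on the circle of radius $d$ about $p$ and is itself a set with at most three pairwise distances, hence has at most five points, with the extremal cases governed by Lemma~\ref{2distances} and Theorem~\ref{3distances}; and every five-point subset of $P$ is a $2$- or $3$-distance set, so (when genuinely $3$-distance) it must be one of the configurations occurring inside the size-$7$ classification of Theorem~\ref{3distances}, and (when $2$-distance) it is classified by Lemma~\ref{2distances}.

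The core of the argument is a case distinction on the ``colour degree'' of a well-chosen vertex: for $p\in P$ let $(x_p,y_p,z_p)$ count the points at distance $1,b,c$ from $p$, so that $x_p+y_p+z_p=5$; since some colour class carries at least five of the fifteen edges, some vertex has degree at least two in it, and combining this with the constraints above limits the possible colour-degree vectors. The productive subcases are: (i) some vertex has four or five neighbours at a common distance $d$ — then those neighbours are four or five co-circular points spanning at most three distances, which by Lemma~\ref{2distances}/Theorem~\ref{3distances} are highly constrained (a regular pentagon in the extreme case), and reattaching $p$ pins down the whole set, yielding the regular pentagon with its centre; (ii) some colour class contains a triangle, necessarily equilateral, of side $c$ (the largest distance) — then the remaining three points lie in its interior, and analysing how they attach to the vertices and to one another, using $K_4$-freeness of each colour, produces the equilateral triangle with the midpoints of its sides; (iii) the remaining, most delicate configurations, where no colour class is ``large'' — here one fixes coordinates for a monochromatic path or rhombus and solves the resulting system of distance equations, which forces $b\in\{\sqrt2,\gamma\}$ with $\gamma=\sqrt{2+\sqrt3}=2\cos15^\circ$, and correspondingly $c\in\{\gamma,\sqrt2\,\gamma\}$ (note $\sqrt2\,\gamma=1+\sqrt3$), together with the two geometric realisations for each. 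Throughout, maximality is used to discard the size-$6$ subsets of the two size-$7$ sets of Theorem~\ref{3distances}, since those can be extended by a seventh point.

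Finally, for each surviving pair $(b,c)$ one checks directly that the claimed configuration is realisable and is the unique maximal one up to similarity, by writing down coordinates and verifying the distance multiset; in particular one verifies the identities $\gamma^2=2+\sqrt3$ and $2\gamma^2=(1+\sqrt3)^2$ that make the two exotic families in Figure~\ref{Fig:shapes3} consistent. The main obstacle is the sheer length and bookkeeping of the case analysis, above all being genuinely exhaustive over the colour-degree vectors so that no further ratio $(b,c)$ escapes detection, and then pinning down the two $\gamma$-families by coordinate computations where the trigonometric identities are easy to get wrong. A secondary subtlety is applying maximality correctly, so that non-maximal size-$6$ $3$-distance sets (such as six vertices of a regular $7$-gon) are excluded without accidentally discarding a genuine example.
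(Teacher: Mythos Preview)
The paper does not contain a proof of this statement: Theorem~\ref{3distances6} is quoted from Shinohara~\cite{MR2083454} and used as a black box, so there is no ``paper's own proof'' to compare your proposal against. Your outline is a plausible sketch of the sort of exhaustive case analysis Shinohara in fact carries out, but as written it is only a plan, not a proof: the substantive work lies precisely in the parts you summarise with phrases like ``solves the resulting system of distance equations'' and ``limits the possible colour-degree vectors,'' and none of that work is present here. If your goal is to supply an independent proof for the paper, you would need to actually execute the case analysis; if your goal is simply to use the result, citing Shinohara (as the paper does) is appropriate.
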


\begin{figure}[h!]
\begin{subfigure}{.33\textwidth}
\centering
    \begin{tikzpicture}[]

\vertex[minimum size = 3pt](0) at (-1,0){};
\vertex[minimum size = 3pt](1) at (1,0){};
\vertex[minimum size = 3pt](2) at (0,0){};
\vertex[minimum size = 3pt](3) at (0,1.732){};
\vertex[minimum size = 3pt](4) at (0.5,0.866){};
\vertex[minimum size = 3pt](5) at (-0.5,0.866){};

\draw(0) -- (1);
\draw (1) -- (3);
\draw (0) -- (3);
\draw(4) -- (5);
\draw (4) -- (2);
\draw (5) -- (2);

\draw[red](1) -- (5);
\draw[red](4) -- (0);
\draw[red](3) -- (2);

\vertex[rectangle, minimum size = 4pt] at (0,0){};
\vertex[rectangle, minimum size = 4pt] at (0.5,0.866){};

\end{tikzpicture}
\caption*{(a)}
\end{subfigure}
\begin{subfigure}{.33\textwidth}
\centering
    \begin{tikzpicture}[]

\draw
\foreach \i in {1,...,5}{
(90+360/5*\i:1) coordinate(\i) node[vtx]{}
};

\vertex[minimum size = 3pt](0) at (0,0){};

\draw[red](1) -- (2);
\draw[red] (2) -- (3);
\draw[red] (3) -- (4);
\draw[red] (4) -- (5);
\draw[red] (5) -- (1);
\draw (0) -- (1);
\draw (0) -- (2);
\draw (0) -- (3);
\draw (0) -- (4);
\draw (0) -- (5);

\vertex[rectangle, minimum size = 4pt] at (0,1){};
\vertex[rectangle, minimum size = 4pt] at (0,0){};

\end{tikzpicture}
\caption*{(b)}
\end{subfigure}
\begin{subfigure}{.33\textwidth}
\centering
    \begin{tikzpicture}[]

\vertex[rectangle, minimum size = 4pt](0) at (-0.5,0){};
\vertex[rectangle, minimum size = 4pt](1) at (0.5,0){};
\vertex[minimum size = 3pt](2) at (0, 0.8660){};
\vertex[minimum size = 3pt](3) at (0, 1.8660){};
\vertex[minimum size = 3pt](4) at (-1.366,-0.5){};
\vertex[minimum size = 3pt](5) at (1.366,-0.5){};

\draw(1) -- (2);
\draw (2) -- (0);
\draw (0) -- (1);
\draw (0) -- (4);
\draw (1) -- (5);
\draw (2) -- (3);

\draw[blue] (4) -- (5);
\draw[blue] (3) -- (5);
\draw[blue] (3) -- (4);

\end{tikzpicture}
\caption*{(c)}
\end{subfigure}
\newline

\begin{subfigure}{.33\textwidth}
\centering
    \begin{tikzpicture}[]

\vertex[rectangle, minimum size = 4pt](0) at (-0.5,0){};
\vertex[rectangle,  minimum size = 4pt](1) at (0.5,0){};
\vertex[minimum size = 3pt](2) at (0, 0.8660){};
\vertex[minimum size = 3pt](3) at (0, -1.866){};
\vertex[minimum size = 3pt](4) at (-1.366,-0.5){};
\vertex[minimum size = 3pt](5) at (1.366,-0.5){};

\draw(1) -- (2);
\draw (2) -- (0);
\draw (0) -- (1);
\draw (0) -- (4);
\draw (1) -- (5);

\draw[red] (3) -- (0);
\draw[red] (3) -- (1);
\draw[red] (3) -- (4);
\draw[red] (3) -- (5);
\draw[red] (2) -- (5);
\draw[red] (2) -- (4);

\draw[red] (0) -- (5);
\draw[red] (1) -- (4);

\end{tikzpicture}
\caption*{(d)}
\end{subfigure}
\begin{subfigure}{.33\textwidth}
\centering
    \begin{tikzpicture}[scale=1.2]

\vertex[rectangle, minimum size = 4pt](0) at (-1,0){};
\vertex[minimum size = 3pt](1) at (1,0){};
\vertex[rectangle, minimum size = 4pt](2) at (0, 1.732){};
\vertex[minimum size = 3pt](3) at (0, -0.268){};
\vertex[minimum size = 3pt](4) at (-0.732,1){};
\vertex[minimum size = 3pt](5) at (0.732,1){};

\draw[blue] (1) -- (2);
\draw[blue] (2) -- (0);
\draw[blue] (0) -- (1);
\draw[blue] (1) -- (4);
\draw[blue] (2) -- (3);
\draw[blue] (0) -- (5);

\draw(0) -- (3);
\draw (0) -- (4);
\draw (1) -- (3);
\draw (1) -- (5);
\draw (2) -- (4);
\draw (2) -- (5);

\end{tikzpicture}
\caption*{(e)}
\end{subfigure}
\begin{subfigure}{.33\textwidth}
\centering
    \begin{tikzpicture}[scale=1.4]

\vertex[minimum size = 3pt](0) at (-1,0){};
\vertex[minimum size = 3pt](1) at (1,0){};
\vertex[rectangle, minimum size = 4pt](2) at (0, 1.732){};
\vertex[minimum size = 3pt](3) at (0, 0.268){};
\vertex[minimum size = 3pt](4) at (-0.732,1){};
\vertex[rectangle, minimum size = 4pt](5) at (0.732,1){};

\draw[blue] (1) -- (2);
\draw[blue] (2) -- (0);
\draw[blue] (0) -- (1);
\draw[blue] (1) -- (4);

\draw[blue] (0) -- (5);

\draw(0) -- (3);
\draw (0) -- (4);
\draw (1) -- (3);
\draw (1) -- (5);
\draw (2) -- (4);
\draw (2) -- (5);
\draw(4) -- (3);
\draw(5) -- (3);

\end{tikzpicture}
\caption*{(f)}
\end{subfigure}

\caption{Maximal 3-distance sets of size 6.}
\medskip
\small
Edges with length 1 are colored black, edges with length $b$ are colored red and edges with length $c$ are colored blue. For aesthetic reasons one of the colors is omitted in each of the drawings. In each of the drawings we marked two vertices which are not contained in a triangle with side lengths $1,b,c$. 
\label{Fig:shapes3}
\end{figure}
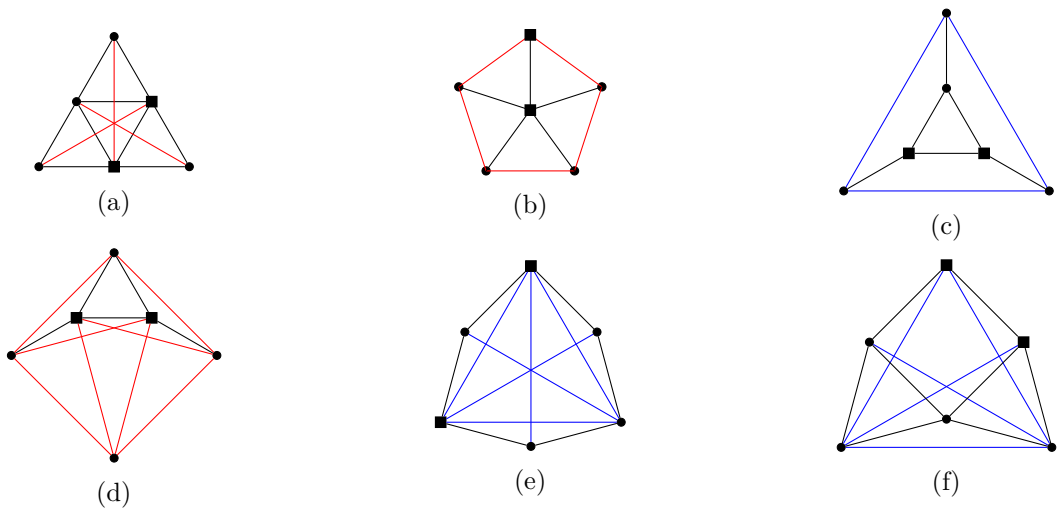

\subsection{Determining forbidden hypergraphs}

\begin{lemma}\label{f32}
Let $T$ be a triangle which is not of type $(90^\circ,60^\circ,30^\circ)$, then $F_{3,2}$ is forbidden for $T$. 
\end{lemma}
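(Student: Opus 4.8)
The plan is to show $F_{3,2}$ is \emph{exactly forbidden} for $T$ (by Lemma~\ref{exactforbidden} this suffices, since $F_{3,2}$ is dense via the ordering $4,5,1,2,3$). So suppose for contradiction that there are points $p_1,\dots,p_5 \in \mathbb{R}^2$ (not necessarily distinct) such that each of the triangles $p_1p_2p_3$, $p_1p_4p_5$, $p_2p_4p_5$, $p_3p_4p_5$ is congruent to $T$. The last three edges share the pair $\{4,5\}$; write $d = |p_4p_5|$, which must be one of the side lengths $a,b,c$ of $T$. Given a segment $p_4p_5$ of a fixed length $d$ equal to a side of $T$, the set of points $q$ with $p_4p_5q \cong T$ is finite --- it consists of at most four points (two choices of which vertex of $T$ maps to the apex, times two reflections across the line $p_4p_5$), and often fewer. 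Hence $p_1,p_2,p_3$ each lie in this small finite set $S$ of candidate apex positions, and moreover $p_1p_2p_3 \cong T$, so $\{p_1,p_2,p_3\}$ is a triangle congruent to $T$ with all three vertices drawn from $S$.

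The core of the argument is therefore a finite case analysis: enumerate, over the three possible values of $d \in \{a,b,c\}$, the point set $S = S(d)$, and check that no three points of $S$ form a triangle congruent to $T$ --- \emph{unless} $T$ is of type $(90^\circ,60^\circ,30^\circ)$. Concretely, place $p_4 = (-d/2,0)$, $p_5=(d/2,0)$. If $d$ is the side opposite the angle $\theta$ of $T$, the apex lies on two rays from each of $p_4,p_5$ making the other two angles of $T$ with the base, giving the finitely many points of $S$; by symmetry it suffices to look at the upper half-plane and then double. One then computes the pairwise distances among the points of $S$ and asks whether some triple realizes the multiset of side lengths $\{a,b,c\}$. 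This is a bounded computation (at most $\binom{\le 12}{3}$ triples across the three cases, and most are killed immediately because the relevant distances are visibly too large or too small, or because two of the three candidate points coincide or are collinear). I expect the $(90^\circ,60^\circ,30^\circ)$ triangle to arise exactly here: with $T$ the half-equilateral triangle, the candidate apex points for an appropriate choice of base include three vertices of a unit equilateral configuration that do span a copy of $T$, which is precisely why that type must be excluded.

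The main obstacle is organizing the case analysis cleanly rather than any single hard step: one must be careful that the ``at most four apex points'' count is stated correctly when $T$ is isosceles (fewer distinct points) or when the triangle being formed is allowed to be degenerate/reflected, and one must make sure the excluded type is genuinely the \emph{only} exception, i.e., that in every other case all $\binom{|S|}{3}$ triples fail. A convenient way to structure this is to first reduce to the subcase where $p_1,p_2,p_3$ are distinct and non-collinear (if two coincide, $p_1p_2p_3$ is degenerate, not $\cong T$; the collinear case is likewise impossible), then note all three lie within distance $2\max(a,b,c)$ of the fixed segment $p_4p_5$, so they lie in a bounded region, and finally invoke the explicit coordinates of $S$. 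Alternatively --- and this may be cleaner --- one can phrase the whole thing in terms of the distance-set lemmas already available: the five points $p_1,\dots,p_5$ together use only the three distances $\{a,b,c\}$ among the pairs forced by the edges, so after checking which pairs are forced one can try to match the configuration against the classification of small $3$-distance sets in Theorem~\ref{3distances6} and rule out all of them except what the $(90^\circ,60^\circ,30^\circ)$ case produces. I would try the direct coordinate computation first and fall back on the distance-set classification if the bookkeeping gets unwieldy.
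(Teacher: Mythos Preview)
Your plan is correct and follows essentially the same route as the paper: reduce to ``exactly forbidden'' via Lemma~\ref{exactforbidden}, fix the segment $p_4p_5$, observe that $p_1,p_2,p_3$ must lie in the finite apex set $S$, and then check that no triple from $S$ is congruent to $T$ unless $T$ is of type $(90^\circ,60^\circ,30^\circ)$.

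The one structural observation you are missing, and which turns your ``bounded computation over $\binom{\leq 12}{3}$ triples'' into a two-line argument, is that the apex set $S$ is always a \emph{rectangle}. Once you have disposed of the equilateral and isosceles cases (the paper does this up front using Lemmas~\ref{1distance} and~\ref{2distances}: in the isosceles case the five points would be a $2$-distance set of size~5, hence a regular pentagon, and $\mathcal{H}(T,P)$ for a pentagon is $C_5$ or empty, never containing $F_{3,2}$), the triangle is scalene, so for each choice of $d=|p_4p_5|$ there are exactly four apex points, symmetric about both the line $p_4p_5$ and its perpendicular bisector --- i.e.\ a rectangle. Then the case split collapses: if $T$ has no right angle, no three vertices of a rectangle can form $T$, done; if $T$ is a right triangle and $d$ is a leg, the rectangle has one side of length $d$ and visibly contains no copy of $T$; if $d$ is the hypotenuse, forcing a triple of rectangle vertices to be congruent to $T$ pins down the rectangle's aspect ratio and leads to the regular-hexagon configuration, whence $T$ is $(90^\circ,60^\circ,30^\circ)$. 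Your fallback idea of using the $3$-distance-set classification would also work in principle (since the shadow graph of $F_{3,2}$ is complete, all ten pairwise distances lie in $\{a,b,c\}$), but note that Theorem~\ref{3distances6} classifies \emph{six}-point maximal sets, not five-point ones, so that route would require more than what the paper provides.
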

\begin{proof}
Recall that $E(F_{3,2})=\{123,145,245,345\}$.
Let $T$ be a triangle with side lengths $a,b,c$ (not necessarily distinct) which is not of type $(90^\circ,60^\circ,30^\circ)$. By Lemma~\ref{exactforbidden} it suffices to show that $F_{3,2}$ is exactly forbidden for $T$. Assume, towards contradiction, that it is not the case. Then there exist points $p_1,p_2,p_3,p_4,p_5\in \mathbb{R}^2$ (not necessarily distinct)
 such that for every edge $xyz\in E(F_{3,2})$, the triangle $p_xp_yp_z$ is congruent to $T$. Since every pair of vertices in $F_{3,2}$ is contained in some edge, the five points $p_1,\ldots,p_5$ are distinct. By Lemma~\ref{1distance} it is not possible that $a=b=c$. If two of the side lengths are equal, then the five points form a regular pentagon by Lemma~\ref{2distances}. For any triangle $T$, the $3$-graph formed by taking edges when the corresponding points form a congruent triangle to $T$ is either isomorphic to $C_5$ or the empty graph. Neither contains a copy of $F_{3,2}$, a contradiction. We conclude that the side lengths $a,b,c$ are pairwise distinct.

{\bf Case:} $T$ is a right triangle.\\
Assume that the right angle is opposite of the side with length $a$. Then $a>b$ and $a>c$.

First, we assume $|p_4p_5|=c$ (the case that $|p_4p_5|=b$ is symmetric). There are four points $a_1,a_2,a_3,a_4\in\mathbb{R}^2$ in the plane which together with $p_4p_5$ form a triangle congruent to $T$. Those four points form a rectangle with side lengths $c$ and $2b$, see Figure~\ref{Fig:F32configa} for an illustration. Therefore, when choosing three out of those four points, the corresponding triangles are not congruent to $T$. Since $145,245,345$ are edges in $F_{3,2}$, we have that $p_1,p_2,p_3\in \{a_1,a_2,a_3,a_4\}$. This contradicts that $123\in E(F_{3,2})$.

Next, assume that $|p_4p_5|=a$. Since $a,b,c$ are pairwise distinct, there are four points $a_1,a_2,a_3,a_4$ in the plane which together with $p_4p_5$ form a triangle congruent to $T$. They form a rectangle with two opposite sides parallel to $p_4p_5$. Since $145,245,345$ are edges in $F_{3,2}$, we have that $p_1,p_2,p_3\in \{a_1,a_2,a_3,a_4\}$. Since $123\in E(F_{3,2})$, the side lengths of the rectangle are $b$ and $c$. After possibly renaming the four points, we have that $a_1a_2$ is parallel to $p_4p_5$ and $a_3a_4$, and $|a_1p_4|<|a_1p_5|$. See Figure~\ref{Fig:F32configb} for an illustration.  
\begin{figure}[h!]
\begin{subfigure}{.5\textwidth}
\centering
    \begin{tikzpicture}
\filldraw (2,0) circle (2pt);
\filldraw (0,0) circle (2pt);
\filldraw (0,1) circle (2pt);
\filldraw (2,1) circle (2pt);
\filldraw (0,-1) circle (2pt);
\filldraw (2,-1) circle (2pt);
\draw (0,0) -- (2,0);
\node[] at (-0.4,0) {$p_4$};
\node[] at (2.5,0) {$p_5$};
\node[] at (-0.4,1) {$a_1$};
\node[] at (2.5,1) {$a_2$};
\node[] at (-0.4,-1) {$a_4$};
\node[] at (2.5,-1) {$a_3$};
\end{tikzpicture}
\caption{\ $|p_4p_5|=c$.}
\label{Fig:F32configa}
\end{subfigure}
\begin{subfigure}{.5\textwidth}
\centering
    \begin{tikzpicture}
\filldraw (2,0) circle (2pt);
\filldraw (0,0) circle (2pt);
\filldraw (0.5,1) circle (2pt);
\filldraw (1.5,1) circle (2pt);
\filldraw (0.5,-1) circle (2pt);
\filldraw (1.5,-1) circle (2pt);
\draw (0,0) -- (2,0);
\node[] at (-0.4,0) {$p_4$};
\node[] at (2.5,0) {$p_5$};
\node[] at (0.1,1) {$a_1$};
\node[] at (1.9,1) {$a_2$};
\node[] at (0.1,-1) {$a_4$};
\node[] at (1.9,-1) {$a_3$};
\end{tikzpicture}
\caption{\ $|p_4p_5|=a$.}
\label{Fig:F32configb}
\end{subfigure}
\caption{Arrangement of points $p_4,p_5,a_1,a_2,a_3,a_4$.}
\end{figure}
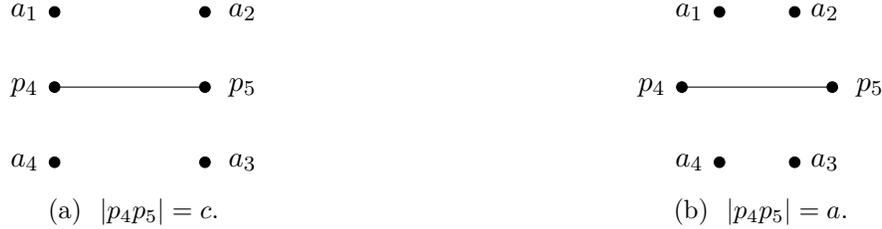
The triangles $a_1p_4p_5$, $a_2p_4p_5$ and $a_1a_2a_3$ are congruent to $T$ and thus $|a_1a_2|=|a_1p_4|$. This implies that the vertices $p_4,p_5,a_1,a_2,a_3,a_4$ form a regular hexagon. But then $T$ is of type $(90^\circ,60^\circ,30^\circ)$, a contradiction.

{\bf Case:} $T$ does not have a right angle.\\
Again, there are four points $a_1,a_2,a_3,a_4\in\mathbb{R}^2$ in the plane which together with $p_4p_5$ form a triangle congruent to $T$. Those four points form a rectangle. Since $145,245,345 \in E(F_{3,2})$, we have that $p_1,p_2,p_3\in \{a_1,a_2,a_3,a_4\}$. Since $123\in E(F_{3,2})$, the triangle $p_1p_2p_3$ is congruent to $T$. This is not possible since $T$ does not have a right angle. 

We conclude that $F_{3,2}$ is exactly forbidden for every triangle not of type $(90^\circ,60^\circ,30^\circ)$.
\end{proof}
\begin{lemma}\label{j4}
The hypergraph $J_4$ is forbidden for every triangle $T$. 
\end{lemma}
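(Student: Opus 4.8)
The plan is to apply Lemma~\ref{exactforbidden}: the hypergraph $J_4$ is dense, so it is forbidden for $T$ if and only if it is exactly forbidden for $T$. Hence it suffices to show that there are no points $p_1,\dots,p_5\in\mathbb{R}^2$ (not necessarily distinct) for which all six triangles $p_1p_ip_j$ with $2\le i<j\le 5$ are congruent to $T$; these six triples are precisely the edges of $J_4$, namely all triples through the vertex $1$. Assume such points exist. Two quick observations: the five points are pairwise distinct, since $p_x=p_y$ would make one of the required triangles degenerate; and for each $i\in\{2,3,4,5\}$ the distance $|p_1p_i|$ equals one of the side lengths $a,b,c$ of $T$, because $p_1p_ip_j\cong T$ for any choice of $j\in\{2,3,4,5\}\setminus\{i\}$.

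The key step is a pigeonhole argument on the four ``spoke lengths'' $|p_1p_2|,|p_1p_3|,|p_1p_4|,|p_1p_5|$: since these lie in the three-element set $\{a,b,c\}$, two of them coincide, say $|p_1p_s|=|p_1p_t|$. Then $p_1p_sp_t$ is a triangle congruent to $T$ with two equal sides at the vertex $p_1$, so $T$ is isosceles. In particular, if $T$ is scalene we are already done.

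It remains to treat an isosceles $T$, with side lengths $a,a,b$ (allowing $a=b$). Then every one of the $\binom{5}{2}$ pairwise distances among $p_1,\dots,p_5$ is a side length of $T$ and hence lies in $\{a,b\}$, so $P:=\{p_1,\dots,p_5\}$ is an $s$-distance set of size $5$ with $s\le 2$. By Lemma~\ref{1distance} it cannot be a $1$-distance set, so $s=2$ and Lemma~\ref{2distances} forces $P$ to be a regular pentagon. To finish, I would exploit the geometry of the regular pentagon: with $v:=p_1$ and the other four vertices listed in cyclic order, the triangle spanned by $v$ and its two cyclic neighbours among these four has side multiset $\{\mathrm{side},\mathrm{side},\mathrm{diagonal}\}$, while the triangle spanned by $v$, one of these neighbours, and the vertex opposite that neighbour has side multiset $\{\mathrm{side},\mathrm{diagonal},\mathrm{diagonal}\}$. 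Since the side length and the diagonal length of a regular pentagon differ, these two triangles are not congruent, so they cannot both be congruent to $T$ — contradicting that all six triangles $p_1p_ip_j$ are congruent to $T$.

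The main content is the pigeonhole reduction to the isosceles case; I do not expect a genuine obstacle beyond that. The only place demanding a bit of care is the regular-pentagon step, where one must fix a convenient relabelling of $p_2,\dots,p_5$ and verify that the two sample triangles through $v$ really have different side multisets — but this is routine given Lemmas~\ref{1distance} and~\ref{2distances}.
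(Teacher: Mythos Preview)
Your proof is correct and follows essentially the same route as the paper: reduce via Lemma~\ref{exactforbidden}, pigeonhole on the four distances $|p_1p_i|$ to dispose of the scalene case, and in the isosceles case use Lemmas~\ref{1distance} and~\ref{2distances} to force a regular pentagon. The only cosmetic difference is the pentagon endgame: the paper observes that $\mathcal{H}(T,P)$ is either empty or isomorphic to $C_5$, neither of which contains $J_4$, whereas you directly exhibit two triangles through $p_1$ with distinct side multisets; both finishes are equally short.
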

\begin{proof}
Recall that $E(J_4)=\{123,124,125,134,135,145\}$.  By Lemma~\ref{exactforbidden} it is sufficient to show that $J_4$ is exactly forbidden for $T$. Assume, for a contradiction, that $J_4$ is not exactly forbidden for $T$. Then there exist points $p_1,p_2,p_3,p_4,p_5\in \mathbb{R}^2$
 such that for every edge $xyz\in E(J_4)$, the triangle $p_xp_yp_z$ is congruent to $T$.

First, assume that the side lengths $a,b,c$ of $T$ are different.
 Since every pair $ij$ is contained in some edge in $J_4$, we have that  
the distances $|p_ip_j|\in \{a,b,c\}$ for $i\neq j\in[5]$. By the pigeonhole principle, and without loss of generality, there exists $i\neq j\in \{2,3,4,5\}$ such that $|p_ip_1|=|p_jp_1|=a$. Now, $p_1p_ip_j$ does not form a triangle congruent to $T$, a contradiction.

Now, assume that the side lengths of $T$ are $a,a,b\in \mathbb{R}$. If $a=b$, then all five points have pairwise distance $a$ from each other, which is not possible by Lemma~\ref{1distance}. Now, let $a\neq b$. Five points can only have pairwise distance $a$ or $b$ if they are arranged in a regular pentagon. The auxiliary hypergraph $\mathcal{H}(T,P)$ is either the empty graph or isomorphic to $C_5$ which does not contain a copy of $J_4$, a contradiction. We conclude that $J_4$ is exactly forbidden for $T$. 
\end{proof}

\begin{lemma}\label{k4}
Let $T$ be a triangle with no right angle, then $K_4^3$ is forbidden for $T$. 
\end{lemma}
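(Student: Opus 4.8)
The plan is to proceed exactly as in the proofs of Lemmas~\ref{f32} and~\ref{j4}: by Lemma~\ref{exactforbidden} it suffices to show that $K_4^3$ is exactly forbidden for $T$. Suppose not; then there exist points $p_1,p_2,p_3,p_4\in\mathbb{R}^2$ such that every triple among them spans a triangle congruent to $T$. Since every pair $ij$ lies in some edge of $K_4^3$, all six pairwise distances belong to $\{a,b,c\}$, so $P=\{p_1,p_2,p_3,p_4\}$ is a (at most) $3$-distance set of size $4$, and moreover every one of its four triples realizes the same multiset of side lengths $\{a,b,c\}$.

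First I would dispose of the degenerate distance patterns. If $a=b=c$, then $P$ is a $1$-distance set of size $4$, contradicting Lemma~\ref{1distance}. If exactly two of $a,b,c$ coincide, then $P$ is a $2$-distance set of size $4$, which by Lemma~\ref{2distances} can be extended to a regular pentagon; but in a regular pentagon no triple has side lengths $\{a,a,b\}$ with the ``$b$'' being the diagonal appearing only once --- more directly, one checks that among the $\binom{4}{2}=6$ pairs, each value must occur exactly twice (each triple uses each length once, there are four triples, and each pair lies in two triples, giving $4\cdot 1 = 6$ length-slots, i.e.\ two per pair... ), forcing a $2$-distance set where both distances occur exactly thrice, which has no realization on $4$ points with every triple scalene-type $\{a,a,b\}$. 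Cleaner: with two equal side lengths $T$ is isosceles, and I will instead argue directly as below that the three distinct-length case already fails, then note the isosceles case reduces similarly or is handled by the same counting. The safest route is the direct geometric argument that works uniformly: consider the triangle $p_1p_2p_3\cong T$, which is scalene when $a,b,c$ are distinct and (since $T$ has no right angle) in particular has no $90^\circ$ angle; the point $p_4$ must form a triangle congruent to $T$ with each of the three sides $p_1p_2$, $p_1p_3$, $p_2p_3$.

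The heart of the proof, then, is this: fix a segment, say $p_1p_2$, of length in $\{a,b,c\}$. The locus of points $q$ such that $qp_1p_2\cong T$ consists of at most four points (two isosceles-type configurations reflected across the line $p_1p_2$, as used in the proof of Lemma~\ref{exactforbidden}). I would show that no single point $p_4$ can simultaneously lie in the corresponding loci for all three sides of $p_1p_2p_3$ unless $T$ has a right angle. Concretely: if $p_4$ together with the edge of length $a$ forms a copy of $T$, and also with the edge of length $b$, etc., then by a short angle-chase at the vertices of $p_1p_2p_3$ one forces two of the triangle's angles to be supplementary or to sum to a value that pins down a $90^\circ$ angle --- this is where the hypothesis ``$T$ has no right angle'' is used, mirroring how the right-angle case had to be excluded for $F_{3,2}$. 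The cleanest implementation is to observe that $K_4^3$ contains $K_4^{3-}$ minus nothing --- that is, $K_4^3$ contains the configuration handled combinatorially: among $p_1,p_2,p_3,p_4$ with all six distances in $\{a,b,c\}$, if the lengths are distinct then some vertex is incident to two edges of the same length (pigeonhole on $3$ edges at a vertex into $3$ length-classes fails, so actually each vertex sees each length exactly once, meaning $P$ is a graph decomposition into three perfect matchings of $K_4$; each such matching is a pair of opposite edges, so the three lengths sit on the three pairs of opposite edges of a ``complete quadrilateral''), and then the two triangles sharing a diagonal force $T$ to be right-angled.

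The main obstacle I anticipate is the last structural step: showing that a $4$-point configuration whose three pairs of opposite edges have the three distinct lengths $a,b,c$ must be a rectangle (hence $T$ right-angled). I would handle it by coordinates --- place the configuration so that one pair of opposite edges is symmetric about an axis, deduce from the equality of the other two opposite-edge lengths that the configuration is a parallelogram, and then from a diagonal argument that it is in fact a rectangle --- and then separately verify the isosceles cases ($a=b\neq c$ and $a\neq b=c$) by the $2$-distance-set argument via Lemma~\ref{2distances} together with the observation that the auxiliary $3$-graph on a regular pentagon is $C_5$, which contains no $K_4^3$. I expect the parallelogram-to-rectangle deduction to be the only genuinely nontrivial computation, and everything else to be bookkeeping.
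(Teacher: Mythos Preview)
Your scalene case is essentially the paper's argument: since every triple is congruent to the scalene $T$, no vertex can be incident to two edges of the same length, so each of the three length-classes is a perfect matching of $K_4$; that is, $|p_1p_2|=|p_3p_4|$, $|p_1p_3|=|p_2p_4|$, $|p_1p_4|=|p_2p_3|$. You then flag ``parallelogram $\Rightarrow$ rectangle'' as the main obstacle and plan a coordinate computation, but none is needed: in the convex quadrilateral on these four points, two of the matchings are the pairs of opposite sides and the third is the pair of diagonals, so both pairs of opposite sides are equal (hence a parallelogram) and the diagonals are equal (hence a rectangle). That is the paper's one-line finish.

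The genuine gap is in your isosceles case. Lemma~\ref{2distances} says that a $2$-distance set of size~$5$ is a regular pentagon; it does \emph{not} say that a $2$-distance set of size~$4$ embeds in one, and in fact most do not (a square, for instance). So the ``extend to a pentagon, then note $\mathcal{H}(T,P)\cong C_5$ has no $K_4^3$'' route does not go through. The correct argument is the multiplicity count you began and then dropped: with side lengths $a,a,b$ each of the four triples contributes two $a$'s and one $b$, giving $8$ and $4$ incidences respectively; since each of the six pairs lies in two triples, exactly four pairs have length $a$ and two have length $b$. The two $b$-edges cannot share a vertex (that triple would then have two sides of length $b$), so they form a matching and the four $a$-edges form a $4$-cycle. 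Thus the configuration is a rhombus with equal diagonals, i.e.\ a square, and $T$ is right-angled --- exactly as in the paper.
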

\begin{proof}
Let $T$ be a triangle with side lengths $a,b,c$. By Lemma~\ref{exactforbidden} it suffices to show that $K_4^3$ is exactly forbidden for $T$. Assume, towards contradiction, that $K_4^3$ is not exactly forbidden for $T$. Then there exist points $p_1,p_2,p_3,p_4\in \mathbb{R}^2$
 such that each of the triangles $p_1p_2p_3,p_1p_2p_4,p_1p_3p_4,p_2p_3p_4$ is congruent to $T$. Note that $a=b=c$ is not possible, since by Lemma~\ref{1distance} there cannot be four points with pairwise the same distance. 

{\bf Case:} The side lengths $a,b,c$ are all different. \\
Each of the four triangles among $p_1,p_2,p_3,p_4$ contains each length exactly once. Thus, $|p_1p_2|=|p_3p_4|$, $|p_1p_3|=|p_2p_4|$ and $|p_1p_4|=|p_2p_3|$. Therefore, the convex hull of the four points is a parallelogram, whose diagonals have the same length, hence it is a rectangle. We conclude that each of the four triangles among the four points has a right angle, contradicting the assumption  that $T$ does not have a right angle.

{\bf Case:} Two of the side lengths $a,b,c$ are the same.\\
Say the lengths of the sides are $a,a,b$. Then among the four points $p_1,p_2,p_3,p_4$, the length $a$ has multiplicity four and $b$ has multiplicity two, the length $b$ sides forming a matching. Therefore the four points need to form a square, contradicting the assumption that $T$ does not have right angle.  

We conclude that $K_4^3$ is exactly forbidden for every triangle $T$ not having a right angle. 
\end{proof}

\begin{lemma}\label{k4minus}
Let $T$ be a triangle which has no right angle and is not of type $(120^\circ,3 0^\circ,$ $ 30^\circ)$. Then $K_4^{3-}$ is forbidden for $T$. 
\end{lemma}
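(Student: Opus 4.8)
The plan is to reduce, via Lemma~\ref{exactforbidden}, to showing that $K_4^{3-}$ (with edges $123,124,134$) is \emph{exactly} forbidden for $T$: there is no placement $p_1,p_2,p_3,p_4\in\mathbb{R}^2$ for which all three triangles $p_1p_2p_3$, $p_1p_2p_4$, $p_1p_3p_4$ are congruent to $T$. Suppose such points existed; write $a,b,c$ for the side lengths of $T$ and $d_{ij}=|p_ip_j|$. Since every pair $\{i,j\}\subseteq[4]$ lies in one of the three edges, every $d_{ij}$ is one of $a,b,c$, and $a=b=c$ is impossible by Lemma~\ref{1distance} (it would force all six distances equal). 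The structural heart of the argument is the observation that in essentially all remaining cases the "missing" fourth triangle $p_2p_3p_4$ is \emph{also} forced to be congruent to $T$, so the configuration is in fact a geometric copy of $K_4^3$; since $T$ has no right angle, this contradicts Lemma~\ref{k4}. The one configuration in which this forcing breaks down is exactly type $(120^\circ,30^\circ,30^\circ)$, which I would peel off first.

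Concretely I would split into two cases. If $a,b,c$ are pairwise distinct, then each of the three given triangles has three distinct side lengths, so $d_{12},d_{13},d_{14}$ are pairwise distinct and hence $\{d_{12},d_{13},d_{14}\}=\{a,b,c\}$; say $d_{12}=a$, $d_{13}=b$, $d_{14}=c$. Reading off the third side of each triangle then pins down $d_{23}=c$, $d_{24}=b$, $d_{34}=a$, so $p_2p_3p_4$ has side-length multiset $\{a,b,c\}$ and is congruent to $T$ — yielding a $K_4^3$ and contradicting Lemma~\ref{k4}. If exactly two side lengths coincide, say the side lengths are $a,a,b$ with $a\neq b$, then each given triangle has exactly one side of length $b$. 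I would first note that at most one of $d_{12},d_{13},d_{14}$ can equal $b$: if $d_{1i}=d_{1j}=b$ for distinct $i,j\in\{2,3,4\}$, the triangle $p_1p_ip_j$ would have two $b$-sides, impossible. If none of them equals $b$, then $d_{23}=d_{24}=d_{34}=b$, so $p_2,p_3,p_4$ form an equilateral triangle of side $b$ and $p_1$ is at distance $a$ from each of its vertices; being its circumcenter forces $a=b/\sqrt{3}$, i.e. $T$ is of type $(120^\circ,30^\circ,30^\circ)$, which is excluded. Otherwise exactly one of $d_{12},d_{13},d_{14}$ equals $b$, and since the edges $123,124,134$ are symmetric in the vertices $2,3,4$ I may assume $d_{14}=b$, $d_{12}=d_{13}=a$; the multiset condition in each triangle then forces $d_{23}=b$, $d_{24}=d_{34}=a$, so again $p_2p_3p_4\cong T$ and we get a $K_4^3$, contradicting Lemma~\ref{k4}.

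So in every surviving case the four points realize all four triples as triangles congruent to $T$, contradicting Lemma~\ref{k4} (this is where the hypothesis that $T$ is not right-angled is used), which completes the proof. The main obstacle — the only step requiring genuine care rather than bookkeeping — is the isosceles case: one must correctly rule out two $b$-edges emanating from $p_1$, and then recognize the remaining degenerate configuration (the circumcenter-of-an-equilateral-triangle situation) as \emph{precisely} the excluded type $(120^\circ,30^\circ,30^\circ)$, since that is exactly the obstruction the hypothesis was designed to remove. Everything else follows from multiset reasoning on side lengths together with the already-established Lemmas~\ref{exactforbidden}, \ref{1distance} and~\ref{k4}.
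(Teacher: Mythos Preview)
Your proof is correct and follows essentially the same route as the paper: reduce via Lemma~\ref{exactforbidden}, split into the scalene and isosceles cases, and in the scalene case force $p_2p_3p_4\cong T$ to invoke Lemma~\ref{k4}; in the isosceles case, identify the ``all three $d_{1i}$ equal $a$'' subconfiguration as the excluded $(120^\circ,30^\circ,30^\circ)$ type. The only difference is in the remaining isosceles subcase ($d_{14}=b$, $d_{12}=d_{13}=a$): you again force $p_2p_3p_4\cong T$ and cite Lemma~\ref{k4}, whereas the paper observes directly that the four points form a parallelogram with equal diagonals, hence a rectangle, contradicting the no-right-angle hypothesis --- but this is just the relevant piece of the proof of Lemma~\ref{k4} unpacked, so the two arguments are essentially the same.
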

\begin{proof}
Let $T$ be a triangle not of type $(120^\circ,30^\circ,30^\circ)$ with side lengths $a,b,c$. By Lemma~\ref{exactforbidden} it is sufficient to show that $K_4^{3-}$ is exactly forbidden for $T$. Assume, towards contradiction, that $K_4^{3-}$ is not exactly forbidden for $T$. Then there exist four points $p_1,p_2,p_3,p_4\in \mathbb{R}^2$
 such that the triangles $p_1p_2p_3$, $p_1p_2p_4$ and $p_1p_3p_4$ are congruent to $T$.
 Since every pair $ij$ is contained in some edge in $K_4^{3-}$, we have that the 
the distances $|p_i-p_j|\in \{a,b,c\}$ for $i\neq j\in[4]$. Note that $a=b=c$ is not possible, since by Lemma~\ref{1distance} there do not exist four points with the same pairwise distances between all pairs of  points.

{\bf Case:} All side lengths are different. \\
In this case, $|p_1p_2|,|p_1p_3|$ and $|p_1p_4|$ are all different, and therefore $p_2p_3p_4$ would also span a triangle congruent to $T$. This contradicts that $K_4^{3}$ is exactly forbidden by Lemmas~\ref{exactforbidden} and \ref{k4}.

{\bf Case:} Two of the side lengths are the same. \\
Assume that the side lengths of $T$ are $a,a,b$.
If $|p_1p_2|=|p_1p_3|=|p_1p_4|$, then $|p_2p_3|=|p_2p_4|=|p_3p_4|$ because $123,124,134\in E(K_4^{3-})$. In this case, $p_2p_3p_4$ spans an equilateral triangle with $p_1$ being its center. We conclude that $T$ is of type $(120^\circ,30^\circ,30^\circ)$, a contradiction. 
Otherwise, without loss of generality, $|p_1p_2|=|p_1p_3|=a$ and $|p_1p_4|=b$. Then, $|p_2p_3|=b$ and $|p_2p_4|=|p_3p_4|=a$. Therefore, the convex hull of the four points is a parallelogram, whose diagonals have the same length, hence it is a rectangle. Each of the four triangles among the four points has a right angle, contradicting that $T$ does not contain a right angle.

Therefore, $K_4^{3-}$ is exactly forbidden for $T$.
\end{proof}

\begin{lemma}
\label{C5}
Let $T$ be a triangle which is not of type $(108^\circ,36^\circ,36^\circ)$, $(72^\circ,72^\circ,36^\circ)$ or $(120^\circ,30^\circ,$ $30^\circ)$. Then $C_5$ is forbidden for $T$. 
\end{lemma}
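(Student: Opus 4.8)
The plan is to apply Lemma~\ref{exactforbidden} and show instead that $C_5$ is \emph{exactly} forbidden for such $T$. Recall $E(C_5)=\{123,234,345,145,125\}$, which is the tight $5$-cycle $\{p_ip_{i+1}p_{i+2} : i\in\mathbb{Z}_5\}$, and observe that every pair of vertices of $C_5$ lies in some edge. So, suppose for contradiction that there are points $p_1,\dots,p_5\in\mathbb{R}^2$ such that every triangle $p_ip_{i+1}p_{i+2}$ (indices mod $5$) is congruent to $T$. Writing $a,b,c$ for the side lengths of $T$, all ten distances $|p_ip_j|$ lie in $\{a,b,c\}$; in particular the $p_i$ are pairwise distinct and $P:=\{p_1,\dots,p_5\}$ is an $s$-distance set with $s\le 3$. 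If $s=1$ this contradicts Lemma~\ref{1distance}. If $T$ is isosceles then every congruent copy of $T$ has its side lengths in a $2$-element set, forcing $s\le 2$.

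If $s=2$, then by Lemma~\ref{2distances} $P$ is a regular pentagon. Up to congruence the only triangles spanned by triples of vertices of a regular pentagon are the ``three consecutive vertices'' triangle, of type $(108^\circ,36^\circ,36^\circ)$, and the remaining one, of type $(72^\circ,72^\circ,36^\circ)$. Since $T$ is of neither type, $\mathcal{H}(T,P)$ has no edge and hence cannot contain $C_5$, a contradiction.

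It remains to treat $s=3$. Then $T$ is scalene (by the isosceles remark) and $A(P)=\{a,b,c\}$ with $a,b,c$ pairwise distinct. Each triangle $p_ip_{i+1}p_{i+2}$ is congruent to the scalene $T$, so its three sides realize $a,b,c$ each exactly once; in particular $|p_ip_{i+1}|\ne|p_{i+1}p_{i+2}|$. Thus the length function on the five ``boundary'' edges $p_1p_2,p_2p_3,p_3p_4,p_4p_5,p_5p_1$ is a proper $3$-colouring of $C_5$, and the length of each ``diagonal'' $p_ip_{i+2}$ is forced to be the single colour missing from $\{|p_ip_{i+1}|,|p_{i+1}p_{i+2}|\}$. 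Up to the dihedral symmetry of $C_5$ and a relabelling of $\{a,b,c\}$ there is exactly one proper $3$-colouring (its colour classes have sizes $2,2,1$), which we may take to be $(|p_1p_2|,|p_2p_3|,|p_3p_4|,|p_4p_5|,|p_5p_1|)=(a,b,a,b,c)$. This determines all ten distances; in particular $|p_1p_3|=|p_3p_5|=|p_5p_1|=c$, so $p_1p_3p_5$ is equilateral of side $c$, while $|p_2p_3|=|p_2p_5|=b$ and $|p_4p_1|=|p_4p_3|=a$.

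To finish, note that $p_2$ is equidistant from $p_3$ and $p_5$, hence lies on the perpendicular bisector of $p_3p_5$, which — since $p_1p_3p_5$ is equilateral — is the line through $p_1$ and the circumcentre of $p_1p_3p_5$; together with $|p_1p_2|=a$ this leaves exactly two positions for $p_2$. A direct computation using the height $\tfrac{\sqrt3}{2}c$ of the equilateral triangle and the law of cosines shows that in one case the interior angle of $T$ between its sides of lengths $a$ and $c$ equals $30^\circ$, and in the other $150^\circ$. Symmetrically, $p_4$ is equidistant from $p_1$ and $p_3$, so it lies on the line through $p_5$ and the circumcentre, and with $|p_4p_5|=b$ the same computation forces the interior angle of $T$ between its sides of lengths $b$ and $c$ to be $30^\circ$ or $150^\circ$. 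Two interior angles of $T$ cannot sum to at least $150^\circ+30^\circ=180^\circ$, and if both equal $30^\circ$ then $T$ is of type $(120^\circ,30^\circ,30^\circ)$, which is excluded. This contradiction proves that $C_5$ is exactly forbidden for $T$, and hence forbidden for $T$ by Lemma~\ref{exactforbidden}. The bulk of the work — and the step I expect to be the main obstacle — is the final coordinate computation locating $p_2$ and $p_4$ inside the equilateral triangle $p_1p_3p_5$; the $s\le 2$ cases and the colouring argument are routine.
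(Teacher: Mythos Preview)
Your proof is correct and follows essentially the same route as the paper: reduce via Lemma~\ref{exactforbidden}, handle the isosceles/equilateral cases with Lemmas~\ref{1distance} and~\ref{2distances}, force the pattern $(a,b,a,b,c)$ on the boundary edges so that $p_1p_3p_5$ is equilateral of side $c$, and then use the perpendicular bisectors through $p_1$ and $p_5$ to pin the remaining two angles to $\{30^\circ,150^\circ\}$. The only difference is cosmetic: where you invoke a ``direct computation with the height $\tfrac{\sqrt3}{2}c$ and the law of cosines'', the paper simply notes that in an equilateral triangle the perpendicular bisector of one side meets each of the other two sides at $30^\circ$, which immediately gives the $30^\circ/150^\circ$ dichotomy without any computation.
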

\begin{proof}
Recall that $E(C_5)=\{123,234,345,451,512\}$.
Let $T$ be a triangle with side lengths $a,b,c$, which is not of type $(108^\circ,36^\circ,36^\circ)$, $(72^\circ,72^\circ,36^\circ)$ or $(120^\circ,30^\circ,30^\circ)$. Again, by Lemma~\ref{exactforbidden}, it is sufficient to show that $C_5$ is exactly forbidden for $T$.
Assume, towards contradiction, that $C_5$ is not exactly forbidden for $T$. Then there exist points $p_1,p_2,p_3,p_4,p_5\in \mathbb{R}^2$
 such that for every edge $xyz\in E(C_5)$, the triangle $p_xp_yp_z$ is congruent to $T$. Since every pair $ij$ is contained in some edge in $C_5$, we have that 
the distances $|p_ip_j|\in \{a,b,c\}$ for $i\neq j\in[5]$. Note that $a=b=c$ is not possible, since by Lemma~\ref{1distance} there cannot be five points with pairwise the same distance. 

Next, assume that two of the side lengths are the same. By Lemma~\ref{2distances}, the five points form a regular pentagon. We conclude that $T$ is of type $(108^\circ,36^\circ,36^\circ)$ or $(72^\circ,72^\circ,36^\circ)$, a contradiction.

Finally, assume that the three side lengths $a,b,c$ are different. Let $\alpha,\beta,\gamma$ be the angles of $T$, where $\alpha$ is opposite the side of length $a$, $\beta$ is opposite the side of length $b$ and $\gamma$ is opposite the side of length $c$. For some labeling of the side lengths, we can assume (recall the structure of $C_5$):
\begin{align*}
|p_1p_2|=a, \quad |p_2p_3|=b, \quad |p_3p_4|=a, \quad |p_4p_5|=b, \quad |p_1p_5|=c.
\end{align*}
This forces the rest of the distances:
\begin{align*}
|p_1p_3|=c, \quad |p_2p_4|=c, \quad |p_3p_5|=c,\quad |p_4p_1|=a, \quad |p_5p_2|=b.
\end{align*}
Now, the triangle $p_1p_3p_5$ is equilateral with side length $c$. See Figure~\ref{C5fig} for an illustration of the following argument.

\begin{figure}[h!]
\begin{center}
    \begin{tikzpicture}

\vertex[minimum size = 3pt, label={[xshift=-0.12cm, yshift=0cm]$p_{1}$}](0) at (0,0){};
\vertex[minimum size = 3pt, label=below:{$p_{3}$} ](1) at (2,0){};
\vertex[minimum size = 3pt, label=left:{$p_{5}$}](2) at (1,1.732) {};

\draw (0) -- (1);
\draw(0) -- (2);
\draw (2) -- (1);

\draw (1,1.732) -- ++(90:1.2cm);
\draw (1,1.732) -- ++(270:3cm);
\node at (1.3,-1) {$L_1$};
\draw (0,0) -- ++(30:3.8cm);
\draw (0,0) -- ++(210:1.4cm);
\node at (2.9,2) {$L_2$};
   \draw [domain=-60:90] plot ({0.8*cos(\x)+1}, {0.8*sin(\x)+1.732});
\node at (1.4,1.732) {\footnotesize $150^\circ$};

   \draw [domain=360:210] plot ({0.8*cos(\x)}, {0.8*sin(\x)});
\node at (0.1,-0.35) {\footnotesize $150^\circ$};

\end{tikzpicture}
\end{center}
\caption{Arrangement of points $p_1,p_2,p_3,p_4,p_5$ in the proof of Lemma~\ref{C5}.}
\label{C5fig}
\end{figure}
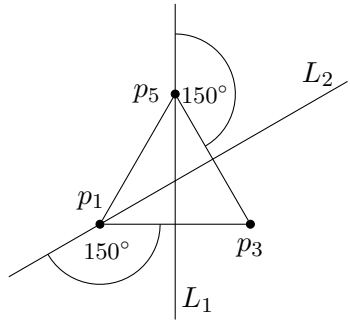

Since $|p_4p_1|=|p_4p_3|=a$, the point $p_4$ needs to be on the line $L_1$, which passes through $p_5$ and the midpoint of $p_1p_3$. This line $L_1$ intersects the line $p_3p_5$ at an angle of $30^\circ$. Since $p_3p_4p_5$ is congruent to $T$, we get $\alpha=30^\circ$ or $\alpha=150^\circ$. 

Similarly, since $|p_2p_3|=|p_2p_5|=b$, the point $p_2$ needs to be on the line $L_2$, which passes through $p_1$ and the midpoint of $p_3p_5$. This line $L_2$ intersects the line $p_1p_3$ at an angle of $30^\circ$. Since $p_1p_2p_3$ is congruent to $T$, we get $\beta=30^\circ$ or $\beta=150^\circ$.

Finally, because $\alpha+\beta+\gamma=180^\circ$, we conclude that $\alpha=\beta=30^\circ$ and $\gamma=120^\circ$, a contradiction. We conclude that $C_5$ is exactly forbidden for $T$. 
\end{proof}

\subsection{Some observations about small 3-graphs}
In this subsection we collect  several observations about $5$-vertex 3-graphs.
\begin{lemma}
\label{5vertexK_4^3-}
Let $H$ be a $K_4^{3-}$-free $3$-graph on $5$ vertices. If $H$ contains at least 3 edges, then $H$ is isomorphic to one of the following 3-graphs.
\begin{align*}
&E(C_5)=\{123,234,345,451,512\}, \quad \quad
&&E(C_5^-)=\{123,234,345,451\},\\
&E(F_{3,2})=\{123,124,125,345\}, \quad \quad
&&E(H_1)=\{123,124,135,145\},\\
&E(F_5)=\{123,124,345\}, \quad \quad
&&E(H_2)=\{123,124,135\}, \\
&E(H_3)=\{123,124,125\}. 
\end{align*}
\end{lemma}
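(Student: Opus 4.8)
The statement is a finite combinatorial classification: list all $K_4^{3-}$-free $3$-graphs on $5$ labelled (up to isomorphism) vertices with at least $3$ edges. Since a $3$-graph on $5$ vertices has at most $\binom{5}{3}=10$ edges, this is in principle a finite case check, and the natural approach is a structured exhaustion organized by the number of edges and by which pairs of edges intersect in two vertices. The plan is to exploit the $K_4^{3-}$-freeness heavily: two edges sharing exactly two vertices, say $xyz$ and $xyw$, must not be ``completed'' to three edges on $\{x,y,z,w\}$, i.e.\ neither $xzw$ nor $yzw$ may be present. More generally, for any four vertices, at most two of the four triples they span can be edges, and if exactly two are present they must share two vertices (a ``cherry''), not form a ``path'' or a ``matching'' of triples.

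\textbf{Key steps.} First I would set up the skeleton: fix the vertex set $[5]$ and consider the ``shadow'' structure of how edges overlap. I would split into cases according to whether some pair of vertices has codegree $\geq 2$ (i.e.\ lies in two edges) — if every pair lies in at most one edge, the $3$-graph is linear and the count is easy, giving only sparse configurations which with $\geq 3$ edges yield things like $F_5$ and $H_2, H_3$ or $C_5$-type configs; otherwise fix an edge $e=123$ and an edge $e'=124$ sharing the pair $12$. Then $K_4^{3-}$-freeness forbids $134$ and $234$. Next I would ask what other edges can be added: any new edge meeting $\{1,2,3,4\}$ in a triple is constrained, and edges using vertex $5$ must themselves avoid creating a $K_4^{3-}$ with $e$ or $e'$. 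Working through the subcases — zero, one, two, or three further edges — and repeatedly applying the four-vertex constraint, one arrives at exactly the seven listed graphs $C_5, C_5^-, F_{3,2}, H_1, F_5, H_2, H_3$. Finally I would double-check that each of these seven is genuinely $K_4^{3-}$-free (quick inspection of every $4$-subset) and that they are pairwise non-isomorphic (e.g.\ by edge count, degree sequence, and the codegree sequence — the pair-codegrees distinguish $C_5^-$ from $F_{3,2}$ from $H_1$, etc.).

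\textbf{Main obstacle.} The genuine difficulty is purely bookkeeping: making the case analysis exhaustive without missing a configuration and without double-counting isomorphic ones. The cleanest way to keep it honest is to always normalize by picking a ``canonical'' cherry $123,124$ first (every $3$-graph with $\geq 2$ edges and a codegree-$2$ pair has one), then branch on the link structure at vertex $5$ and on how many of the remaining triples within $\{1,2,3,4,5\}$ are used, each time invoking the lemma that on any $4$ vertices at most two triples (forming a cherry) are edges. A secondary subtlety is the fully linear case (all pair-codegrees $\le 1$): here one must separately verify that with exactly $3,4,5$ edges the only $K_4^{3-}$-free linear $3$-graphs on $5$ vertices are $F_5$-like or $C_5/C_5^-$-like — a short argument since $5$ vertices admit very few linear $3$-graphs with that many edges. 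Once the branching is laid out this way, no step requires real computation, only careful enumeration, so I expect the write-up to be a page of organized case distinctions ending with the explicit isomorphism check.
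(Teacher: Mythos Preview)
Your approach is essentially the same as the paper's: both proceed by locating a ``cherry'' $123,124$ (two edges sharing a pair), using $K_4^{3-}$-freeness to forbid $134$ and $234$, and then case-splitting on which further triples can be added. The paper organizes the bookkeeping by edge count --- first classifying all $K_4^{3-}$-free $5$-vertex $3$-graphs with exactly $3$ edges (obtaining $F_5,H_2,H_3$), then for each of these asking which single edge can be added without creating a $K_4^{3-}$ (obtaining $C_5^-,F_{3,2},H_1$), and finally which edge can be added to those (obtaining only $C_5$) --- which keeps the enumeration short and makes exhaustiveness transparent.

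One genuine confusion in your plan that you should fix: the ``linear'' case (every pair of codegree at most $1$) is in fact \emph{vacuous} once you have $\ge 3$ edges on $5$ vertices. Three pair-disjoint triples would cover $9$ of the $10$ pairs; if $ab$ is the missing pair then each of $a,b$ lies in at most $3$ covered pairs, so has degree $\le 1$, while each other vertex has degree $\le 2$, giving total degree $\le 8<9$, a contradiction. In particular none of $F_5,H_2,H_3,C_5,C_5^-$ is linear --- each visibly contains a pair of codegree $\ge 2$ --- so all seven target graphs will emerge from your cherry case, not from the linear branch. This does not break your argument, but your description of what the linear branch yields is wrong and should be replaced by the observation that the branch is empty.
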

\begin{proof}
First, let $H$ be a $K_4^{3-}$-free $3$-graph on $5$ vertices with exactly $3$ edges. Then $H$ contains $4$ vertices spanning exactly two edges. Without loss of generality, let $123,124\in E(H)$. Since $H$ is $K_4^{3-}$-free, hence $134,234\notin E(H)$. If $345\in E(H)$, then $H=F_5$. If $125\in E(H)$, then $H=H_3$. If $135\in E(H)$, $145\in E(H)$, $235\in E(H)$ or $245\in E(H)$, then $H$ is isomorphic to $H_2$. 

Next, let $H$ be a $K_4^{3-}$-free $3$-graph on $5$ vertices with exactly 4 edges. Then $H$ contains a copy of $F_5,H_2$ or $H_3$. If $H$ contains a copy of $F_5$, then $H$ is isomorphic to $F_{3,2}$ or $C_5^-$, because if $125$ is added to $F_5$, a copy of $F_{3,2}$ is created, and if $145,135,235$ or $245$ is added to $F_5$, a copy of $C_5^-$ is created. If $H$ contains a copy of $H_2$, then $H$ is isomorphic to $H_1$ or $C_5^-$, because if $145$ is added to $H_2$, $H_1$ is created, and if $235,245$ or $345$ is added to $H_2$, a copy of $C_5^-$ is created, and if $125$ is added to $H_2$ a copy of $K_4^{3-}$ is created. If $H$ contains a copy of $H_3$, then $H$ is isomorphic to $F_{3,2}$, because $345$ is the only edge which can be added to $H_3$ without creating a copy of $K_4^{3-}$.

There is no edge which can be added to $F_{3,2}$ or $H_1$ without creating a copy of $K_4^{3-}$. The only edge which can be added to $C_5^-$ without creating a copy of $K_4^{3-}$ is $512$, in which case a $C_5$ is created. Thus, up to isomorphism, $C_5$ is the unique $K_4^{3-}$-free $3$-graph on $5$ vertices with exactly $5$ edges. 

Trivially, there is no $K_4^{3-}$-free $3$-graph on $5$ vertices with more than 5 edges. 
\end{proof}

Given a 3-graph $H$, its \emph{shadow graph} $\delta H$ is defined to be the graph on vertex set $V(H)$ where a pair of vertices is an edge iff it if contained in an edge in $H$. We say that $H$ has \emph{complete shadow graph} if $\delta H$ is a clique on $|V(H)|$ vertices. The \emph{link graph} of a vertex $x$ in $H$ is the graph on vertex set $V(H)\setminus\{x\}$ where a pair of vertices $ab$ is an edge iff $abx\in E(H)$.

\begin{lemma}
\label{5vertexK_4^3-2}
Let $H$ be a $5$-vertex $K_4^{3-}$-free $3$-graph such that $\delta H$ is complete. Then $H$ is isomorphic to $C_5$ or $F_{3,2}$.
\end{lemma}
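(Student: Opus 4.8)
The plan is to reduce immediately to the classification in Lemma~\ref{5vertexK_4^3-}. First I would observe that a complete shadow graph $\delta H$ on $5$ vertices has $\binom{5}{2}=10$ edges, and since each hyperedge of $H$ contributes at most $3$ pairs to $\delta H$, the $3$-graph $H$ must have at least $\lceil 10/3\rceil = 4$ edges. In particular $H$ has at least $3$ edges, so Lemma~\ref{5vertexK_4^3-} applies and $H$ is isomorphic to one of the seven listed $3$-graphs; keeping only those with at least $4$ edges leaves the four candidates $C_5$, $C_5^-$, $F_{3,2}$, and $H_1$.

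Next I would simply compute the shadow graph of each of these four candidates and discard the ones whose shadow is not a clique on $5$ vertices. For $C_5^- = \{123,234,345,451\}$ the pair $25$ is not covered by any edge, and for $H_1 = \{123,124,135,145\}$ the pairs $25$ and $34$ are not covered; hence neither has complete shadow. On the other hand, for $C_5 = \{123,234,345,451,512\}$ and for $F_{3,2} = \{123,124,125,345\}$ a direct check shows every one of the $10$ pairs occurs in some edge, so both have complete shadow graph. This leaves exactly $C_5$ and $F_{3,2}$, which is the claim.

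I do not expect any real obstacle here: the only mild point is making sure the edge-count lower bound is genuinely $4$ (so that we land among the denser members of the list of Lemma~\ref{5vertexK_4^3-}), and the rest is a finite verification over four explicit $3$-graphs. If one prefers to avoid citing Lemma~\ref{5vertexK_4^3-} at full strength, an alternative route is to argue directly: each vertex of $H$ has degree $\geq 1$, and if some vertex $v$ had link graph containing two incident edges then, completing the shadow around the other three vertices while staying $K_4^{3-}$-free quickly pins down the structure; but invoking the previous lemma is cleaner and shorter.
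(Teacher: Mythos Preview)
Your proposal is correct and follows essentially the same approach as the paper: derive the lower bound of $4$ edges from $\lceil 10/3\rceil$, invoke Lemma~\ref{5vertexK_4^3-} to reduce to the four candidates $C_5$, $C_5^-$, $F_{3,2}$, $H_1$, and then eliminate $C_5^-$ and $H_1$ by checking their shadow graphs. Your version is slightly more explicit (naming the uncovered pairs), but the argument is the same.
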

\begin{proof}
 Since $\binom{5}{2}=10$ and because $\delta H$ is complete, we conclude that $H$ contains at least $\left\lceil \frac{10}{3}\right\rceil =4$ edges. By Lemma~\ref{5vertexK_4^3-}, $H$ is isomorphic to $C_5, \ F_{3,2},\ C_5^-$ or $H_1$. The hypergraphs $C_5^-$ and $H_1$ do not have complete shadow graph. Thus, $H$ is isomorphic to $C_5$ or $F_{3,2}$. 
\end{proof}

\begin{lemma}
\label{5vertexK_4^3-C5F32}
There does not exist a $5$-vertex $\{K_4^{3-},C_5,F_{3,2}\}$-free 3-graph $H$ with complete shadow graph. 
\end{lemma}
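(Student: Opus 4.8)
The plan is to observe that this statement is an immediate consequence of Lemma~\ref{5vertexK_4^3-2}. Suppose, for contradiction, that $H$ is a $5$-vertex $\{K_4^{3-},C_5,F_{3,2}\}$-free $3$-graph with complete shadow graph. In particular, $H$ is $K_4^{3-}$-free and has complete shadow graph, so Lemma~\ref{5vertexK_4^3-2} applies and tells us that $H$ is isomorphic to $C_5$ or to $F_{3,2}$. But $H$ was assumed to be both $C_5$-free and $F_{3,2}$-free, so it cannot be isomorphic to either of them --- a contradiction. Hence no such $H$ exists.

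There is no real obstacle here: the work has already been done in the preceding lemmas (especially the case analysis in Lemma~\ref{5vertexK_4^3-} classifying $K_4^{3-}$-free $5$-vertex $3$-graphs with at least three edges, and its specialization Lemma~\ref{5vertexK_4^3-2} under the complete-shadow hypothesis). The only thing to check is that the hypotheses of Lemma~\ref{5vertexK_4^3-2} are indeed implied by those assumed here, which is immediate. If one preferred a self-contained argument, one could instead repeat the counting step (complete shadow on $5$ vertices forces at least $\lceil 10/3\rceil = 4$ edges) and the short enumeration from Lemma~\ref{5vertexK_4^3-} to see that the only $K_4^{3-}$-free options with complete shadow are $C_5$ and $F_{3,2}$, both now excluded; but invoking Lemma~\ref{5vertexK_4^3-2} directly is cleanest.
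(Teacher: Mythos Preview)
Your proof is correct and essentially the same as the paper's: both rest on the classification in Lemma~\ref{5vertexK_4^3-}. The only cosmetic difference is that you invoke the packaged Lemma~\ref{5vertexK_4^3-2} directly, whereas the paper returns to Lemma~\ref{5vertexK_4^3-} and re-checks that the remaining candidates $C_5^-$ and $H_1$ lack a complete shadow graph.
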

\begin{proof}
 Let $H$ be a $\{K_4^{3-},C_5,F_{3,2}\}$-free 3-graph with 5 vertices such that $\delta H$ is complete. Because $\delta H$ is complete, the 3-graph $H$ contains at least $4$ edges. By Lemma~\ref{5vertexK_4^3-}, $H$ is isomorphic to $C_5^-$ or $H_1$. Neither of those hypergraphs has a complete shadow graph, a contradiction. 
\end{proof}

\begin{lemma}
\label{5vertexK_4^3}
Let $H$ be a $5$-vertex $\{C_5,J_4\}$-free $3$-graph, which contains a copy of $K_4^3$ and has complete shadow graph. Then $H$ is isomorphic to $H_4$, defined as
$$V(H_4)=[5], \quad E(H_4)=\{123,234,134,124,514,523\}.$$
\end{lemma}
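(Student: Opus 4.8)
The plan is to fix a copy of $K_4^3$ in $H$, say on the vertex set $\{1,2,3,4\}$, and let $5$ denote the remaining vertex. Since every triple inside $\{1,2,3,4\}$ is an edge, $H$ is completely determined by the link graph $L$ of vertex $5$, regarded as a graph on $\{1,2,3,4\}$, where $ij\in E(L)$ iff $ij5\in E(H)$. The goal is to show that $L$ must be a perfect matching on $\{1,2,3,4\}$; since every permutation of $\{1,2,3,4\}$ is an automorphism of $K_4^3$, this immediately yields $H\cong H_4$.

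First I would extract the constraints on $L$. Because $\delta H$ is complete, each pair $\{i,5\}$ lies in some edge of $H$, so $L$ has no isolated vertex. If some $i\in\{1,2,3,4\}$ had degree $3$ in $L$, then the link graph of $i$ in $H$ would be complete on the other four vertices (the three pairs inside $\{1,2,3,4\}$ coming from $K_4^3$, the remaining three from $L$), so the six edges of $H$ through $i$ would form a copy of $J_4$ with center $i$, contradicting that $H$ is $J_4$-free. Hence $L$ has minimum degree at least $1$ and maximum degree at most $2$, and a short check shows that, up to relabeling $\{1,2,3,4\}$, the graph $L$ is a perfect matching $M_2$, a path with $E(L)=\{12,23,34\}$, or a $4$-cycle with $E(L)=\{12,23,34,41\}$.

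It remains to eliminate the last two possibilities using $C_5$-freeness, which I would do by exhibiting an explicit tight $5$-cycle inside $H$ in each case. In the path case $E(H)=\{123,124,134,234,125,235,345\}$, and the five triples $\{124,125,134,235,345\}$ --- which form the tight cycle with cyclic vertex order $(5,2,1,4,3)$ --- all lie in $E(H)$, so $H$ contains $C_5$, a contradiction. In the $4$-cycle case $E(H)=\{123,124,134,234,125,235,345,145\}$, and the five triples $\{123,125,145,234,345\}$ --- the tight cycle with cyclic vertex order $(1,5,4,3,2)$ --- all lie in $E(H)$, again giving a copy of $C_5$. Therefore $L\cong M_2$, i.e.\ $E(H)=E(K_4^3)\cup\{ab5,cd5\}$ for a partition of $\{1,2,3,4\}$ into pairs $\{a,b\}$ and $\{c,d\}$, which is exactly $H_4$ up to relabeling.

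The argument is a finite case analysis throughout, and the only point needing care is the bookkeeping in the last paragraph: verifying that the listed quintuples of triples are genuinely tight $5$-cycles and are genuinely contained in the relevant $H$. A slightly slicker alternative uses that $C_5$ is self-complementary as a $3$-graph, so that $H$ contains a copy of $C_5$ precisely when the set of non-edges of $H$ is contained in some tight $5$-cycle; in the path and $4$-cycle cases there are only three, respectively two, non-edges, which makes this transparent. Finally I would check that $H_4$ itself is admissible --- it contains $K_4^3$, has complete shadow graph, and, since its vertex $5$ has degree $2$ whereas every vertex of a copy of $C_5$ or $J_4$ has degree at least $3$ inside that copy, it contains no copy of $C_5$ or $J_4$ --- confirming that the classification is non-vacuous.
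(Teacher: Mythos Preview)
Your proof is correct and follows essentially the same approach as the paper: fix the $K_4^3$ on $\{1,2,3,4\}$, analyze the link graph $L$ of vertex $5$, use completeness of the shadow to force minimum degree $1$ in $L$, use $J_4$-freeness to force maximum degree $2$, and use $C_5$-freeness to eliminate the non-matching possibilities. The only organizational difference is that the paper observes directly that $L$ contains no path on four vertices (which simultaneously rules out both $P_4$ and $C_4$), whereas you enumerate the three candidate graphs and exhibit an explicit tight $5$-cycle in the two bad cases; your self-complementarity remark is a nice extra.
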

\begin{proof}
Let $V(H)=[5]$ such that the vertices $1,2,3,4$ form a copy of $K_4^3$ in $H$. Since $H$ is $J_4$-free, the link graph of vertex $5$ does not contain a vertex of degree 3. Indeed, if some $a\in [4]$ has degree 3 in the link graph of vertex $5$, then $ab5\in E(H)$ for every $b\in [4]\setminus\{a\}$. However, since $1,2,3,4$ form a copy of $K_4^3$, we then have that $acd\in E(H)$ for every distinct $c,d\in [5]\setminus \{a\}$ and thus $H$ contains a copy of $J_4$, a contradiction.

Further, since $H$ is $C_5$-free, the link graph of vertex $5$ does not contain a path on 4 vertices. Indeed, if 5 contains a path $a,b,c,d$ in its link graph, then $5ab,5bc,5cd\in E(H)$. Since $1,2,3,4$ form a copy of $K_4^3$, we also have $bad,acd\in E(H)$. Therefore, $H$ contains a copy of $C_5$, a contradiction.

Because the shadow graph of $H$ is complete, every vertex of the link graph of $5$ has degree at least 1. Indeed, if $a\in [4]$ has degree 0 in the link graph of $5$, then $ab5\notin E(H)$ for every $b\in [4]\setminus \{a\}$. Then the pair $a5$ is not an edge in the shadow graph of $H$, a contradiction.

The only 4-vertex graph with minimum degree 1 and maximum degree 2, which does not contain a path on 4 vertices, is a matching of size 2. Therefore, we conclude that the link graph of $5$ is a matching of size $2$ and thus $H$ is isomorphic to $H_4$.
\end{proof}

\section{Lagrangian Method}
\label{lagrang}

Let $H$ be an $n$-vertex $3$-graph.
The \emph{Lagrangian polynomial} of $H$ is
\begin{align*}
\lambda_H(x_1,\ldots,x_{n}):= \sum_{ijk \in H} x_ix_jx_k,
\end{align*}
and the \emph{Lagrangian} of $H$ is
\begin{align*}
\lambda(H):= \max \{\lambda_H(x_1,\ldots,x_{n}): \ (x_1,x_2,\ldots,x_{n})\in \Delta_{k} \},
\end{align*}
where $\Delta_{n}=\{(x_1,x_2,\ldots,x_n)\in [0,1]^n: x_1+x_2+\ldots+x_n=1\}$ is the standard $(n-1)$-simplex in $\mathbb{R}^n$. The Lagrangian is an important function in determining $\pi(H)$, see \cite{MR771722}.

The following lemma reduces determining the function $h(n,T)$ to a weighted optimization problem with at most 20 variables.  
\begin{lemma}
 \label{lagrangian}
Let $T$ be a triangle and $n$ be a positive integer. Then there exists $\varepsilon_0>0$ such that for every $0<\varepsilon<\varepsilon_0$, there exists a point set $P$ of size $|P|\leq 20$ such that  $\mathcal{H}(T,P,\varepsilon)$ has complete shadow graph and 
\begin{align*}
h(n,T)\leq n^3 \lambda(\mathcal{H}(T,P,\varepsilon)).
\end{align*}
\end{lemma}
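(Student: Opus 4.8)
The plan is to start with an $\varepsilon$-optimal point configuration $Q$ of size $n$ achieving $h(n,T,\varepsilon)$ triangles $\varepsilon$-congruent to $T$, and to show that after a cleaning step the induced hypergraph $\mathcal{H}(T,Q,\varepsilon)$ can be ``collapsed'' onto a bounded-size point set $P$ whose Lagrangian controls the edge count. The first step is to pass to a subhypergraph with complete shadow graph: inside $\mathcal{H}(T,Q,\varepsilon)$ take a maximal collection of vertices $p_1,\dots,p_k$ such that every pair $p_ip_j$ lies in some $\varepsilon$-congruent triangle with a third point of $Q$; this is the ``shadow-complete core'' and I will argue that almost all edges of $\mathcal{H}(T,Q,\varepsilon)$ can be attributed to such a core. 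Concretely, I would group the points of $Q$ into clusters of diameter $O(\varepsilon)$ (using Lemma~\ref{4points}-type arguments to bound how many mutually-close points can interact), pick one representative per cluster, and observe that two representatives are joined in the shadow graph of the representative hypergraph iff the clusters they represent contain a pair at distance in $\{a,b,c\}\pm 2\varepsilon$.

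The second step is the size bound $|P|\le 20$. Here is where the elementary geometry from Theorems~\ref{3distances} and~\ref{3distances6} enters: the representative points form (up to the $\varepsilon$-fuzz) a point set all of whose pairwise distances lie in the $3$-element set $\{a,b,c\}$ — more precisely, every pair of representatives that is an edge of the shadow graph realizes one of the three lengths, and I will argue one may discard representatives not participating in the shadow graph. A set realizing only the three distances $a,b,c$ has bounded size; in fact by the $3$-distance-set bound (Theorem~\ref{3distances}) any such set of points with \emph{all} pairwise distances in a fixed $3$-set has size at most $7$, and allowing a few ``degenerate/coincident'' representatives and combinations across different clusters still keeps the total well under $20$. (The constant $20$ is generous precisely to absorb the bookkeeping of which of $a,b,c$ each pair realizes and the finitely many sporadic configurations in Theorem~\ref{3distances6}.) I would then let $P$ be the set of these $\le 20$ representative points, perturbed slightly so they are distinct, and note $\mathcal{H}(T,P,\varepsilon)$ has complete shadow graph by construction.

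The third step is the Lagrangian inequality $h(n,T)\le n^3\lambda(\mathcal{H}(T,P,\varepsilon))$. Assign to each vertex $v\in V(P)$ the weight $x_v = |C_v|/n$, where $C_v$ is the cluster of $Q$ represented by $v$; then $\sum_v x_v = 1$, so $(x_v)_v\in\Delta_{|P|}$. Every triangle $\varepsilon$-congruent to $T$ in $Q$ has its three vertices in three clusters $C_u, C_v, C_w$ (not necessarily distinct) whose representatives span a triangle $\varepsilon'$-congruent to $T$ for a slightly larger $\varepsilon'$ still below $\varepsilon_0$; hence the number of such triangles is at most $\sum_{uvw\in E(\mathcal{H}(T,P,\varepsilon))} |C_u|\,|C_v|\,|C_w| = n^3\,\lambda_{\mathcal{H}(T,P,\varepsilon)}(x) \le n^3\,\lambda(\mathcal{H}(T,P,\varepsilon))$. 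Triangles with two vertices in the same cluster contribute a term $O(n^2)$, which — after a standard argument comparing $h(n,T,\varepsilon)$ for slightly different $\varepsilon$, or absorbing into the definition $h(n,T)=\min_\varepsilon h(n,T,\varepsilon)$ together with the fact that $\lambda$ is attained and the inequality is scale-free in $n$ — can be shown not to affect the stated bound; alternatively one takes $\varepsilon \to 0$ along the clustering and uses that degenerate triangles force repeated points which a generic perturbation removes.

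\textbf{Main obstacle.} The delicate point is the clustering/collapsing step: one must show that \emph{every} $\varepsilon$-congruent triangle really does come from a bounded-size ``pattern'' of clusters, i.e.\ that the metric structure at scale $\gg\varepsilon$ is a genuine $3$-distance (or fewer) configuration, so that Theorems~\ref{3distances}–\ref{3distances6} apply and yield the universal constant $20$. Subtleties include clusters that are close to more than one of $a,b,c$ apart (handled by taking $\varepsilon$ small relative to the gaps $|a-b|,|b-c|,|a-c|$, which is where $\varepsilon_0=\varepsilon_0(T)$ comes from), and making sure the lower-order ($O(n^2)$) contributions are legitimately negligible for the stated inequality; I expect the write-up to spend most of its effort precisely on formalizing that the representative point set has complete shadow graph and size at most $20$, with the Lagrangian inequality itself being essentially immediate once the clustering is in place.
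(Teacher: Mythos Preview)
Your proposal has a genuine gap in the clustering step, and the overall route diverges from the paper's in a way that creates rather than avoids difficulties.

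The clustering you describe --- ``group the points of $Q$ into clusters of diameter $O(\varepsilon)$, pick one representative per cluster'' --- is not well-defined for an arbitrary extremal configuration $Q$. Nothing forces the $n$ points to sit in a bounded number of small clumps; a priori they could be spread so that every cluster is a singleton, leaving you with $n$ representatives and no size reduction. The extremal constructions happen to cluster, but you cannot assume this of $Q$. Relatedly, your invocation of the $3$-distance-set results (Theorems~\ref{3distances} and~\ref{3distances6}) is premature: those theorems concern exact distance sets, whereas at this stage all you have are pairwise distances lying in $\varepsilon$-windows around $a,b,c$. Passing from approximate to exact distances is precisely the content of the \emph{next} lemma (Lemma~\ref{lagrangian2}), which is where the bound drops from $20$ to $7$; you are conflating the two steps. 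Finally, the $O(n^2)$ error from triangles with two vertices in one cluster is not obviously negligible for the stated \emph{exact} inequality $h(n,T)\le n^3\lambda(\mathcal H(T,P,\varepsilon))$, and your suggested fixes (varying $\varepsilon$, generic perturbation) are handwaves.

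The paper sidesteps all of this with a purely combinatorial argument. Start from the full $n$-point set $P$ and note that the uniform weight vector $(1/n,\dots,1/n)$ already gives $h(n,T)/n^3=\lambda_{\mathcal H(T,P,\varepsilon)}(1/n,\dots,1/n)\le \lambda(\mathcal H(T,P,\varepsilon))$, so the Lagrangian inequality is immediate with no error term. Then take an optimal weight vector $\mathbf x$ with the fewest nonzero coordinates and apply the standard mass-shifting argument: if $x_i,x_j>0$ but no edge $p_ip_jp_k$ has $x_k>0$, move all of $x_j$ onto $x_i$ without decreasing the Lagrangian, contradicting minimality. Hence the support $P'=\{p_i:x_i>0\}$ automatically has complete shadow graph. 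The bound $|P'|\le 20$ comes not from distance-set classification but from elementary circle intersections: fix any $p_i,p_j\in P'$; every other $p_k\in P'$ has each of $|p_kp_i|,|p_kp_j|$ in one of three short intervals, so $p_k$ lies near one of at most $3\cdot 3\cdot 2=18$ points, and the pairwise separation $\ge(1-2\varepsilon)\min\{a,b,c\}$ forces $|P'|\le 18+2=20$. No clustering, no limits, no error terms.
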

\begin{proof}
Let $T$ be a triangle with side lengths $a,b,c$. 
Let $\varepsilon_0>0$ be sufficiently small for the following argument to hold, in particular, such that for every $0<\varepsilon< \varepsilon_0$ there exists a point set $P=\{p_1,p_2,\ldots,p_n\}\subseteq \mathbb{R}^2$ of size $n$ such that $e(\mathcal{H}(T,P,\varepsilon))=h(n,T)$. For $\textbf{x'}=(\frac{1}{n},\ldots,\frac{1}{n})\in \Delta_n$, we have
\begin{align*}
        \frac{h(n,T)}{n^3}= \frac{e(\mathcal{H}(T,P,\varepsilon))}{n^3}= \lambda_{\mathcal{H}(T,P,\varepsilon)}(\textbf{x'}) \leq \lambda(\mathcal{H}(T,P,\varepsilon)).
\end{align*}
Let $\textbf{x}\in \Delta_n$ be such that $\lambda(\mathcal{H}(T,P,\varepsilon))=\lambda_{\mathcal{H}(T,P,\varepsilon)}(\textbf{x})$ with the fewest non-zero entries. Suppose that there exist two distinct points $p_i,p_j\in P$ such that the weights $x_i$ and $x_j$ are positive and there does not exist an edge in $\mathcal{H}(T,P,\epsilon)$ of the form $p_ip_jp_k$ with $x_k>0$.

Let $s_i$ be the sum of the products $x_{k} x_{k'}$ over all edges in $\mathcal{H}(T,P,\varepsilon)$ that are incident to $p_i$, i.e. they are of the form $p_ip_kp_{k'}$.
Define $s_j$ similarly for $p_j$, where we may
assume that $s_i \geq s_j$. For $k\in [n]$, define 
\begin{align*}
x^{*}_k=\begin{cases}
x_k & \text{if } k\neq i, k\neq  j, \\
x_i+x_j & \text{if } k=i, \\
0 & \text{if } k=j. 
\end{cases}
\end{align*}
Then,
\begin{align*}
\lambda_{\mathcal{H}(T,P,\varepsilon)}(\mathbf{x^{*}}) =  \lambda_{\mathcal{H}(T,P,\varepsilon)}(\mathbf{x})  + x_j s_i - x_j s_j \geq \lambda_{\mathcal{H}(T,P,\varepsilon)}(\mathbf{x}), 
\end{align*}
contradicting the choice of $\textbf{x}$. We conclude that for every two distinct points $p_i,p_j\in P$ with positive weights $x_i$ and $x_j$, there exists an edge in $\mathcal{H}(T,P,\epsilon)$ of the form $p_ip_jp_k$ with $x_k>0$. Let $P'\subseteq P$ be the set of points $p_i\in P$ with $x_i>0$. Then every point pair in $P'$ is contained in a triangle that is $\varepsilon$-congruent to $T$, 
\begin{align*}
h(n,T)\leq n^3\lambda_{\mathcal{H}(T,P,\varepsilon)}(\textbf{x})= n^3\sum_{ijk \in \mathcal{H}(T,P,\varepsilon)} x_ix_jx_k=n^3  \sum_{ijk \in \mathcal{H}(T,P',\varepsilon)} x_ix_jx_k
\end{align*}
and $\sum_{i: x_i>0}x_i=1$. Let $p_i,p_j\in P'$ be distinct. For any point $p_k\in P'\setminus\{p_i,p_j\}$, we have 
\begin{align*}
|p_kp_i|,|p_kp_j|\in (a-\varepsilon a,a+\varepsilon a) \cup (b-\varepsilon b,b+\varepsilon b) \cup (c-\varepsilon c,c+\varepsilon c). 
\end{align*}
There are at most 18 points in the plane which have one of the distances $a,b$ or $c$ from the points $p_i$ and $p_j$, because there are $3^2=9$ ways to choose the distances to $p_i$ and $p_j$, and for each such combination there are at most 2 points in the plane. 
Since every pair of points from $P'$ has distance at least $(1-2\varepsilon) \cdot \min\{a,b,c\}$ from each other, we conclude that $|P'\setminus\{p_i,p_j\}|\leq 18$ and thus $|P'|\leq 20$. The point set $P'$ has the desired properties.
\end{proof}

The following lemma is a refinement of Lemma~\ref{lagrangian}.  
\begin{lemma}
 \label{lagrangian2}
Let $T$ be a triangle and $n$ be a positive integer. Then there exists a point set $P$ of size $|P|\leq 7$ such that $\mathcal{H}(T,P)$ has complete shadow graph and 
\begin{align*}
h(n,T)\leq n^3 \lambda(\mathcal{H}(T,P)).
\end{align*}
\end{lemma}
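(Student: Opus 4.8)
The plan is to start from Lemma~\ref{lagrangian}, which already gives a point set $P$ of size at most $20$ with $\mathcal{H}(T,P,\varepsilon)$ having complete shadow graph and $h(n,T)\le n^3\lambda(\mathcal{H}(T,P,\varepsilon))$, and then to shrink $P$ down to at most $7$ points using the classification of small distance sets from Section~\ref{forbidden}. First I would replace the ``$\varepsilon$-congruent'' hypergraph with an exact one: since $\mathcal{H}(T,P,\varepsilon)$ is dense (its shadow graph is complete, so any vertex-ordering works) and not forbidden for $T$ in the sense that it occurs for arbitrarily small $\varepsilon$, Lemma~\ref{exactforbidden} (or more precisely the $k$-partial sequence argument inside its proof) produces points $q_1,\dots,q_{|P|}\in\mathbb{R}^2$, possibly with coincidences, such that every edge of $\mathcal{H}(T,P,\varepsilon)$ corresponds to a triangle \emph{exactly} congruent to $T$. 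Carrying the Lagrangian-optimal weight vector $\mathbf{x}$ along, this yields a (possibly multi-)point configuration $Q$ and weights with $h(n,T)\le n^3\lambda_{\mathcal{H}}(\mathbf{x})$ where $\mathcal{H}=\mathcal{H}(T,Q)$ and every pair of positively-weighted points lies in an edge.

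Next I would merge coincident points: if $q_i=q_j$ then collapsing them to a single vertex with weight $x_i+x_j$ does not decrease the Lagrangian polynomial value (the lost terms $x_ix_jx_k$ are nonnegative, and since $q_iq_jq_k$ would be a degenerate ``triangle'' no such edge exists anyway), and the shadow graph stays complete on the surviving vertices. So we may assume the positively-weighted points of $Q$ are pairwise distinct, forming an honest point set $P'\subseteq\mathbb{R}^2$ in which every pair of points is one of the edges of a triangle congruent to $T$, hence every pairwise distance lies in $\{a,b,c\}$ where $a,b,c$ are the side lengths of $T$. Thus $P'$ is an $s$-distance set with $s\le 3$. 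By Lemma~\ref{1distance}, Lemma~\ref{2distances} and Theorem~\ref{3distances}, a $1$-distance set has size $\le 3$, a $2$-distance set has size $\le 5$, and a $3$-distance set has size $\le 7$; in every case $|P'|\le 7$. Restricting $\lambda_{\mathcal H}$ to the simplex on $P'$ (which is where $\mathbf x$ lives) we get $h(n,T)\le n^3\lambda_{\mathcal H}(\mathbf x)\le n^3\lambda(\mathcal{H}(T,P'))$, and $P'$ is the desired point set of size at most $7$ with complete shadow graph.

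The main obstacle I anticipate is the bookkeeping in the passage from $\varepsilon$-congruent to exactly congruent while \emph{preserving the Lagrangian lower bound and the complete-shadow-graph property} — in particular making sure that the edge set only shrinks or stays the same under the limit/collapse operations (a limit of $\varepsilon_n$-congruent triangles is exactly congruent, so edges survive, and collapsing coincident vertices cannot create new congruent triangles among the surviving distinct points), so that $h(n,T)\le n^3\lambda(\mathcal H(T,P'))$ still holds and $\delta\mathcal H(T,P')$ is still complete. The rest is a direct invocation of the three distance-set theorems already quoted. One minor point to handle carefully is degenerate triangles $T$: the problem assumes $T$ is a genuine triangle, so $a,b,c$ are positive and the collapsing step only ever removes would-be edges rather than genuine ones; with that, the argument goes through verbatim.
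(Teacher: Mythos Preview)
Your plan is essentially the paper's own proof: apply Lemma~\ref{lagrangian}, pigeonhole over the finitely many $3$-graphs on at most $20$ vertices to fix a single abstract $H$ with complete shadow graph realized for arbitrarily small~$\varepsilon$, run the partial-sequence argument of Lemma~\ref{exactforbidden} to pass to exact congruence, and then invoke Theorem~\ref{3distances} to cut the number of points to at most~$7$.

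Two small corrections are in order. First, your justification that $H$ is dense --- ``its shadow graph is complete, so any vertex-ordering works'' --- is wrong: for $F_{3,2}$ the shadow graph is complete, but with the ordering $1,2,3,4,5$ the density condition already fails at $i=4$. The paper's proof does not go through density at all; it reruns the partial-sequence argument using only the complete shadow graph, observing that each new point $v_{i+1}$ has distance in $\{a,b,c\}$ from the two already-fixed anchors $v_1,v_2$ and hence lies in one of at most $18$ locations. Your parenthetical ``or more precisely the $k$-partial sequence argument inside its proof'' is exactly the right instinct, and once you adapt that argument in this way the density issue disappears. Second, the merging step for coincident $q_i$'s is unnecessary: since the shadow of $H$ is complete, every pair $q_iq_j$ lies in a triangle exactly congruent to $T$, so $|q_iq_j|\in\{a,b,c\}$ is positive and all the $q_i$ are automatically distinct.
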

\begin{proof}
Let $T$ be a triangle with side lengths $a,b,c$. Given a $3$-graph $H$ and $0\leq i \leq |V(H)|$, a sequence $(v^n)_n$ of vectors of $k$ points $v^n=(v^n_1,\ldots,v^n_k)\in (\mathbb{R}^2)^k$ is called $(H,i)$-\emph{partial}, if there exists a sequence of real numbers $\varepsilon_n \downarrow 0$ such that $v^n_xv^n_yv^n_z$ is $\varepsilon_n$-congruent to $T$ for $xyz\in E(H)$ and all $n\in  \mathbb{N}$, and further $v^n_j=v^{1}_j$ for all $n\in  \mathbb{N}$ and $j\leq i$.

By Lemma~\ref{lagrangian}, there exists $\varepsilon_0$ such that for all $0<\varepsilon<\varepsilon_0$ there exists a point set $P$ of size at most 20 such that every point pair in $P$ is contained in a triangle  $\varepsilon$-congruent  to $T$ and $h(n,T)\leq n^3 \lambda(\mathcal{H}(T,P,\varepsilon))$. Since the number of $3$-graphs on at most 20 vertices is finite, there exists a 3-graph $H$ with complete shadow graph on at most $20$ vertices such that $h(n,T)\leq n^3\lambda(H)$ and there exists an $(H,0)$-partial sequence. After shifting, we can assume that the first point in each vector is the origin, and thus we get an $(H,1)$-partial sequence.

Because $H$ has complete shadow graph, $12$ is contained in some edge $e\in E(H)$. After rotating around the origin, we can assume that the second point $v_n^2$ in each vector is on the $x$-axis with positive $x$-coordinate and has distance at most $2\varepsilon_n \cdot \min\{a,b,c\}$ from one of the points $(a,0),(b,0),(c,0)$. After replacing the second point in each vector with a point from $(a,0),(b,0),(c,0)$ which it is the closest to, we get an $(H,1)$-partial sequence, where the second point takes one out of three values. 
Going over to a subsequence where the second point is constant, we obtain an $(H,2)$-partial sequence. 

Now, assume that for some $2\leq i \leq |V(H)|$ there exists an $(H,i)$-partial sequence $(v^n)_n$. We will construct an $(H,i+1)$-partial sequence from it. 
 There are at most $18$ points $p\in \mathbb{R}^2$ such that $|v^1_1p|,|v^1_2p|\in \{a,b,c\}$. After replacing each $v^n_{i+1}$ with a point among those $18$ points it is the closest to, and going over to a subsequence where the second point is constant, we obtain an $(H,i+1)$-partial sequence. 

Iteratively, we obtain an $(H,|V(H)|)$-partial sequence, call it $(w^n)_n$. Then $w_j^n=w_j^1$ for all $n\in \mathbb{N}$ and $j\in [k]$. The point set $P:=\{w_1^1,\ldots,w_{|V(H)|}^1\}$ has the property that if $xyz\in E(H)$, then $w_x^1w_y^1w_z^1$ is $\varepsilon_n$-congruent to $T$ for all $n$ for some $\varepsilon_n\downarrow 0$, and thus also congruent to $T$. Therefore $\mathcal{H}(T,P)=H$ and
\begin{align*}
h(n,T)\leq n^3 \lambda(H)= n^3 \lambda(\mathcal{H}(T,P)).
\end{align*}
  Since the shadow graph of $H$ is complete, $P$ is a set in which pairwise distances take at most three different values, by Theorem~\ref{3distances}, we get $|P|\leq 7$. 
\end{proof}

\subsection{Application of the Lagrangian Method}

\begin{lemma}
\label{6gonlagrange}
Let $H_5$ be the $6$-vertex 3-graph with edges $142,143,145,146, 251,253,254,256,361,$ $362,364,365$. Then $\lambda(H_5)\leq \frac{1}{16}$.
\end{lemma}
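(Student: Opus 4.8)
The plan is to analyze the Lagrangian polynomial $\lambda_{H_5}(x_1,\ldots,x_6)$ directly by exploiting the special structure of $H_5$. Writing the vertex set as $\{1,2,3\}\cup\{4,5,6\}$, I observe that every edge of $H_5$ consists of exactly two vertices from the part $\{4,5,6\}$ together with exactly one vertex from $\{1,2,3\}$; more precisely, the link of vertex $1$ (inside $H_5$) is the set of pairs $\{4,2\},\{4,3\},\{4,5\},\{4,6\}$ — wait, re-reading the edges $142,143,145,146$ they all contain $4$, so vertex $1$ always pairs with vertex $4$ and one other vertex, and symmetrically $2$ pairs with $5$, and $3$ pairs with $6$. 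So the structure is: edge $14z$ for $z\in\{2,3,5,6\}$, edge $25z$ for $z\in\{1,3,4,6\}$, edge $36z$ for $z\in\{1,2,4,5\}$. Thus, setting $u=x_1, v=x_2, w=x_3$ and $p=x_4, q=x_5, r=x_6$, and writing $S=u+v+w$, $S'=p+q+r$, the Lagrangian polynomial becomes
\begin{align*}
\lambda_{H_5}(\mathbf{x}) &= up(v+w+q+r) + vq(u+w+p+r) + wr(u+v+p+q).
\end{align*}

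First I would substitute $v+w = S-u$, $q+r = S'-p$ etc., obtaining
\begin{align*}
\lambda_{H_5}(\mathbf{x}) = up\,(S - u + S' - p) + vq\,(S - v + S' - q) + wr\,(S - w + S' - r),
\end{align*}
and with the constraint $S + S' = 1$ this is $up(1-u-p) + vq(1-v-q) + wr(1-w-r)$. So the key reduction is
\begin{align*}
\lambda(H_5) = \max\Bigl\{\,f(u,p)+f(v,q)+f(w,r)\ :\ u,v,w,p,q,r\ge 0,\ u+v+w+p+q+r=1\,\Bigr\},
\end{align*}
where $f(s,t) = st(1-s-t)$. Now I would pair the variables: let $\alpha = u+p$, $\beta = v+q$, $\gamma = w+r$, so $\alpha+\beta+\gamma = 1$. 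For fixed $\alpha$, maximizing $f(u,p) = up(1-\alpha)$ subject to $u+p=\alpha$ gives $up \le \alpha^2/4$, hence $f(u,p) \le \tfrac{\alpha^2(1-\alpha)}{4}$. Therefore
\begin{align*}
\lambda(H_5) \le \frac{1}{4}\bigl(g(\alpha) + g(\beta) + g(\gamma)\bigr), \qquad g(t) := t^2(1-t),
\end{align*}
maximized over $\alpha+\beta+\gamma=1$ with $\alpha,\beta,\gamma\ge 0$.

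The remaining obstacle — and the one computational step I would actually carry out — is to bound $g(\alpha)+g(\beta)+g(\gamma)$ on the simplex $\alpha+\beta+\gamma=1$. The function $g(t)=t^2-t^3$ has $g''(t) = 2-6t$, so it is convex on $[0,1/3]$ and concave on $[1/3,1]$; it is not globally concave, so one cannot immediately apply Jensen. Instead I would argue that the maximum of $\sum g(\alpha_i)$ over the simplex is attained either at an interior critical point or on the boundary (where one of the variables is $0$, reducing to two variables summing to $1$). At an interior critical point all nonzero $\alpha_i$ satisfy $g'(\alpha_i) = 2\alpha_i - 3\alpha_i^2 = \mu$ for a common multiplier $\mu$; since this quadratic has at most two roots, the $\alpha_i$ take at most two distinct values, and a short case check (one value repeated three times: $\alpha=\beta=\gamma=1/3$ giving $\sum g = 3\cdot\tfrac{1}{9}\cdot\tfrac{2}{3} = \tfrac{2}{9}$; one value repeated twice; boundary case $\gamma=0$, $\alpha+\beta=1$ where $g(\alpha)+g(1-\alpha)$ is maximized at an endpoint giving value $g(1)+g(0)=0$ or near $\alpha=\beta=1/2$ giving $2\cdot\tfrac14\cdot\tfrac12=\tfrac14$) shows the maximum of $\sum g$ is at most $\tfrac{1}{4}$, attained e.g. at $(\tfrac12,\tfrac12,0)$. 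Plugging in, $\lambda(H_5) \le \tfrac14\cdot\tfrac14 = \tfrac{1}{16}$, as claimed. I expect the boundary case (two nonzero pair-sums equal to $1/2$, corresponding to the construction $u=p=1/4$, $v=q=1/4$, $w=r=0$) to be exactly the extremizer, which also serves as a sanity check since it matches the constant $\tfrac{1}{16} = n^3/16$ appearing in Theorem~\ref{upperbounds}(a).
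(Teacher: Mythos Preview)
Your proposal is correct and follows essentially the same route as the paper: both arguments rewrite the Lagrangian polynomial as $x_1x_4(1-x_1-x_4)+x_2x_5(1-x_2-x_5)+x_3x_6(1-x_3-x_6)$, apply AM--GM to each product $x_ix_j\le (x_i+x_j)^2/4$, and thereby reduce to maximizing $\sum_i z_i^2(1-z_i)$ over $z_1+z_2+z_3=1$. The only difference is in the endgame for this three-variable problem: the paper finishes with an algebraic simplification (using $\sum z_i^2(1-z_i)=\sum_{i<j} z_iz_j-3z_1z_2z_3$), whereas you invoke a Lagrange-multiplier/critical-point argument --- your boundary case is the decisive one, and there the identity $g(\alpha)+g(1-\alpha)=\alpha(1-\alpha)\le\tfrac14$ makes your ``short case check'' immediate (and also disposes of the two-values interior case you left implicit, since any such critical point has value at most the boundary maximum).
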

\begin{proof}
We have
 \begin{align*}
 \lambda(H_5)&= \max_{\textbf{x}\in \Delta_{6}} \ x_1x_4(x_2+x_3+x_5+x_6)+ x_2x_5(x_1+x_3+x_4+x_6)+x_3x_6(x_1+x_2+x_4+x_5)\\
 &\leq \max_{\textbf{x}\in \Delta_{6}} \ \frac{(x_1+x_4)^2}{4}(x_2+x_3+x_5+x_6)+ \frac{(x_2+x_5)^2}{4}(x_1+x_3+x_4+x_6)\\
 &+
 \frac{(x_3+x_6)^2}{4}
 (x_1+x_2+x_4+x_5)\\
 &=
\frac{1}{4}\max_{\textbf{z}\in \Delta_{3}} \ z_1^2(z_2+z_3)+z_2^2(z_1+z_3)+z_3^2(z_1+z_2)\\
&= 
\frac{1}{4} \max_{\textbf{z}\in \Delta_{3}} \ z_1z_2(z_1+z_2)+z_1z_3(z_1+z_3)+z_2z_3(z_2+z_3) \\
&= 
\frac{1}{4} \max_{\textbf{z}\in \Delta_{3}} \ z_1z_2+z_1z_3+z_2z_3-3z_1z_2z_3 \\
&=
\frac{1}{4}\ \max_{\textbf{z}\in \Delta_{3}, \ z_3\leq \frac{1}{3}} \ z_1z_2(1-3z_3)+(z_1+z_2)z_3 \\
&\leq \frac{1}{4}\max_{\textbf{a}\in \Delta_{2}} \ \frac{a_1}{4}(1-3a_2)+a_1a_2\leq \frac{1}{4}\max_{0\leq a \leq 1} \ \frac{a}{4}(3a-2)+a(1-a)
\leq \frac{1}{16},\qedhere
 \end{align*}
 where in the first inequality we used the AM-GM inequality, and in the first equality the substitutions are $z_1:=x_1+x_4,\ z_2:=x_2+x_5$ and $z_3:=x_3+x_6$.
\end{proof}

\begin{lemma}\label{306090}
Let $T$ be a triangle of type $(90^\circ,60^\circ,30^\circ)$. Then $h(n,T)\leq \frac{n^3}{16}$.
\end{lemma}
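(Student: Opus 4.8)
The plan is to apply the Lagrangian machinery from Lemma~\ref{lagrangian2} together with the structural analysis of small $3$-graphs to reduce the problem to computing a Lagrangian, and then invoke Lemma~\ref{6gonlagrange}. First I would fix a triangle $T$ of type $(90^\circ,60^\circ,30^\circ)$ and, by Lemma~\ref{lagrangian2}, obtain a point set $P$ with $|P|\le 7$ such that $\mathcal{H}(T,P)$ has complete shadow graph and $h(n,T)\le n^3\lambda(\mathcal{H}(T,P))$. Write $H:=\mathcal{H}(T,P)$. The goal is then to show $\lambda(H)\le\frac1{16}$.

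Next I would pin down which forbidden hypergraphs constrain $H$. Since $T$ is not of type $(120^\circ,30^\circ,30^\circ)$, $(108^\circ,36^\circ,36^\circ)$ or $(72^\circ,72^\circ,36^\circ)$, Lemmas~\ref{k4}, \ref{k4minus} and \ref{C5} give that $K_4^3$, $K_4^{3-}$ and $C_5$ are forbidden for $T$ (a right triangle has no right... wait — here $T$ \emph{is} right-angled, so Lemmas~\ref{k4},\ref{k4minus} do not apply); instead the usable facts are that $J_4$ is forbidden (Lemma~\ref{j4}, all triangles) and $C_5$ is forbidden (Lemma~\ref{C5}, since the three excluded types do not include $(90^\circ,60^\circ,30^\circ)$), while $F_{3,2}$ is \emph{not} forbidden — indeed the $30$-$60$-$90$ triangle is exactly the exception in Lemma~\ref{f32}, and this is what forces the bound $\frac1{16}$ rather than something smaller. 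So $H$ is a $\{C_5,J_4\}$-free $3$-graph on at most $7$ vertices with complete shadow graph, and I would argue it also cannot contain $K_4^{3-}$ with all four of its triples present in a way that... more carefully: the right move is to analyze the possible structures of such an $H$ directly. The key geometric point is that in a $30$-$60$-$90$ triangle, six points arranged as a regular hexagon (together with long diagonals, which have length $2b$ where the sides are $b$, $b\sqrt3$, $2b$) realize the configuration $H_5$ of Lemma~\ref{6gonlagrange}: each long diagonal pairs with two hexagon edges to form a $30$-$60$-$90$ triangle. So the extremal $H$ should be (a subgraph of, or contained in a blow-up sense dominated by) $H_5$.

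The heart of the argument is therefore a case analysis showing that any $\{C_5,J_4\}$-free $3$-graph $H$ on at most $7$ vertices with complete shadow graph that arises as some $\mathcal{H}(T,P)$ satisfies $\lambda(H)\le\frac1{16}$. I would split on $|V(H)|$. For $|V(H)|\le 5$: by Lemma~\ref{5vertexK_4^3-}-type reasoning (or directly), a $J_4$-free, $C_5$-free $3$-graph on $5$ vertices with complete shadow graph has few enough edges that its Lagrangian is at most $\frac1{16}$ — in fact for $5$ vertices $\lambda\le\frac1{16}$ holds for any $3$-graph with at most a handful of edges, and one checks the maximal candidates ($F_{3,2}$, $C_5^-$, $H_1$, $H_4$). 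For $|V(H)|=6$: here the relevant geometry (three pairs of antipodal hexagon vertices) should force $H$ to be a sub-hypergraph of $H_5$ up to isomorphism, so $\lambda(H)\le\lambda(H_5)\le\frac1{16}$ by monotonicity of the Lagrangian under adding edges and Lemma~\ref{6gonlagrange}. For $|V(H)|=7$: by Theorem~\ref{3distances}, $P$ is a regular $7$-gon or a regular hexagon plus center, and in either case one checks directly that the induced $3$-graph of $30$-$60$-$90$ triangles has Lagrangian at most $\frac1{16}$ (for the hexagon-plus-center one would restrict to the hexagon or argue the center adds nothing useful; for the regular $7$-gon one verifies no $30$-$60$-$90$ triangle even appears, so $H$ is empty). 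Combining all cases gives $\lambda(H)\le\frac1{16}$ and hence $h(n,T)\le\frac{n^3}{16}$.

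\textbf{Main obstacle.} The delicate part is the $6$- and $7$-vertex case analysis: one must rule out, using only elementary geometry (the list in Theorem~\ref{3distances6} of maximal $3$-distance $6$-point sets, plus ad hoc arguments), that some cleverly chosen configuration of points realizing a denser $3$-graph than $H_5$ exists, and then confirm that every such realizable $3$-graph either embeds into $H_5$ or has Lagrangian bounded by $\frac1{16}$ for an independent reason. Managing the isomorphism types and checking the Lagrangian bound for each — while being careful that the Lagrangian is monotone under edge addition so it suffices to bound the maximal realizable graphs — is where the real work lies; the final inequality $\lambda(H)\le\frac1{16}$ then drops out of Lemma~\ref{6gonlagrange}.
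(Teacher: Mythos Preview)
Your overall approach matches the paper's: reduce via Lemma~\ref{lagrangian2} to a small $P$, use that $C_5$ and $J_4$ are forbidden (Lemmas~\ref{C5} and~\ref{j4}), split on $|P|$, and finish with Lemma~\ref{6gonlagrange}. Two concrete gaps need filling before the plan goes through.

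For $|P|=5$, your list of ``maximal candidates'' is not justified and partly wrong: $C_5^-$ and $H_1$ do not have complete shadow graph, and you give no argument reducing the remaining $\{C_5,J_4\}$-free $5$-vertex $3$-graphs with complete shadow to just $F_{3,2}$ and $H_4$. The missing ingredient is geometric and specific to this right triangle: since $T$ is scalene, whenever $\mathcal{H}(T,P)$ contains a copy of $K_4^{3-}$ on points $p_1,p_2,p_3,p_4$, the three distances $|p_1p_2|,|p_1p_3|,|p_1p_4|$ are forced to be pairwise distinct, so $p_2p_3p_4$ is also congruent to $T$; in fact the four points form a $1\times\sqrt3$ rectangle and $K_4^3$ is present. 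This yields the clean dichotomy the paper uses: either $\mathcal{H}(T,P)$ is $K_4^{3-}$-free (then Lemma~\ref{5vertexK_4^3-2} plus $C_5$-freeness forces $F_{3,2}$, whose Lagrangian is below $1/16$), or it contains $K_4^3$ (then Lemma~\ref{5vertexK_4^3} forces $H_4\subseteq H_5$). Without this step the purely combinatorial classification you sketch is incomplete.

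For $|P|\ge 6$, you assume $P$ sits in a regular hexagon without justification. The distance set $\{1,\sqrt3,2\}$ also occurs in the equilateral-triangle-with-midpoints configuration of Theorem~\ref{3distances6}, which must be explicitly ruled out (two of its six points lie in no triangle congruent to $T$, so the shadow is not complete). Only after invoking Theorems~\ref{3distances} and~\ref{3distances6} and eliminating that case can you conclude $P$ lies inside a regular hexagon plus center; then observing that the center lies in no copy of $T$ gives $\mathcal{H}(T,P)\subseteq H_5$ and Lemma~\ref{6gonlagrange} finishes.
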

\begin{proof}

Without loss of generality let the side lengths of $T$ be $1,\sqrt{3}$ and $2$. By Lemma~\ref{lagrangian2}, there exists a point set $P$ of size at most $7$ such that $\mathcal{H}(T,P)$ has complete shadow graph and $h(n,T)\leq n^3 \lambda({\mathcal{H}(T,P)})$.

If $|P|\leq 4$, then $\lambda({\mathcal{H}(T,P)})\leq \lambda(K_4^{3})=\frac{1}{16}$. Next, assume $|P|=5$. If ${\mathcal{H}(T,P)}$ is $K_4^{3-}$-free, then by Lemma~\ref{5vertexK_4^3-2}, the hypergraph ${\mathcal{H}(T,P)}$ is isomorphic to $C_5$ or $F_{3,2}$. However, $\lambda(C_5)=\frac{1}{25}$ and $\lambda(F_{3,2})=\frac{189+15\sqrt{5}}{6 \cdot  961}\leq 0.04$ as observed in \cite{BaberTalbot}. In particular, $\lambda({\mathcal{H}(T,P)})\leq \frac{1}{16}$. Therefore, we can assume that ${\mathcal{H}(T,P)}$ contains a copy of $K_4^{3-}$. The corresponding four points form a rectangle with side lengths $1$ and $\sqrt{3}$, and therefore ${\mathcal{H}(T,P)}$ contains in fact a copy of $K_4^3$. By Lemmas~\ref{C5} and \ref{j4}, ${\mathcal{H}(T,P)}$ is $C_5$ and $J_4$-free. Therefore, by Lemma~\ref{5vertexK_4^3}, $\mathcal{H}(T,P)$ is isomorphic to the $3$-graph $H_4$. The $3$-graph $H_4$ is contained as a copy in the $3$-graph $H_5$, which can be seen by observing that in $H_5$ the vertices $1,2,4,5$ span a copy of  $K_4^3$ and the link graph of vertex $3$ on this set is a matching. Then, by Lemma~\ref{6gonlagrange}, $$\lambda(\mathcal{H}(T,P))= \lambda(H_4)\leq \lambda(H_5) \leq \frac{1}{16}.$$

If $|P|\geq 6$, then, by Theorems~\ref{3distances} and \ref{3distances6}, $P$ is contained in a point set forming a regular 6-gon with its center, or $P$ forms an equilateral triangle of side length 2 together with the midpoints of the three sides. It is not possible that $P$ forms an equilateral triangle of side length 2 together with the midpoints of the three sides, because then two of the points were not contained in a triangle congruent to $T$, as can be seen in Figure~\ref{Fig:shapes3} (a).

Thus $P$ is contained in a point set forming a regular 6-gon with its center. Since the center is not contained in any triangle  congruent to $T$, we conclude that $P$ is a regular $6$-gon. Therefore, $\mathcal{H}(T,P)$ is isomorphic to $H_5$, as can be seen in Figure~\ref{Fig:6gon} and thus $\lambda(\mathcal{H}(T,P))=\lambda(H_5)\leq \frac{1}{16}$ by Lemma~\ref{6gonlagrange}. 
\begin{figure}[h!]
\begin{subfigure}{.5\textwidth}
\centering
\begin{tikzpicture}

\draw
\foreach \i in {1,...,6}{
(90+360/6*\i:0.8) coordinate(\i) node[vtx]{}
};

\draw(1) -- (2);
\draw (2) -- (3);
\draw (3) -- (4);
\draw (4) -- (5);
\draw (5) -- (6);
\draw (6) -- (1);

\node[] at (0,-1.2) {$1$};
\node[] at (0,1.2) {$4$};
\node[] at (1,0.5) {$3$};
\node[] at (-1,0.5) {$5$};
\node[] at (1,-0.5) {$2$};
\node[] at (-1,-0.5) {$6$};

\end{tikzpicture}
\end{subfigure}
\begin{subfigure}{.5\textwidth}
\centering

The triangles congruent to $T$ are $142,143,145,$ $146, 251,253,254,256,361,362,364,365$.
\vspace{1cm}

\end{subfigure}
\caption{A regular $6$-gon.}
\label{Fig:6gon}
\end{figure}

We conclude $h(n,T)\leq n^3\lambda(\mathcal{H}(T,P))\leq \frac{n^3}{16}$. 
\end{proof}

\begin{lemma}\label{30-30-120}
Let $T$ be a triangle of type $(120^\circ,30^\circ,30^\circ)$. Then $h(n,T)\leq \frac{4}{81}n^3$.
\end{lemma}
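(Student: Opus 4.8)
The plan is to mimic the structure of the proof of Lemma~\ref{306090} and reduce the problem to a Lagrangian computation on a small hypergraph. Fix $T$ of type $(120^\circ,30^\circ,30^\circ)$ with side lengths, say, $1,1,\sqrt3$. By Lemma~\ref{lagrangian2} there is a point set $P$ with $|P|\le 7$ such that $\mathcal H(T,P)$ has complete shadow graph and $h(n,T)\le n^3\lambda(\mathcal H(T,P))$. So it suffices to show $\lambda(\mathcal H(T,P))\le \tfrac{4}{81}$ for every such $P$. Note that the target density $\tfrac{4}{81}>\tfrac1{27}$, which matches the construction in (b): three clusters of weight $\tfrac29$ at the vertices of an equilateral triangle and one cluster of weight $\tfrac13$ at its center. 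This tells me which configuration should be extremal, namely $P$ being a regular $6$-gon-with-center type set, or more precisely an equilateral triangle together with its center, and the Lagrangian bound should be tight there.

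First I would dispose of the small cases. If $|P|\le 4$ then $\lambda(\mathcal H(T,P))\le\lambda(K_4^3)=\tfrac1{16}$, and since $\tfrac1{16}<\tfrac{4}{81}$ this case is fine. (Here, unlike in Lemma~\ref{306090}, we can afford $K_4^3$.) For $|P|=5$: since $T$ is not right-angled, $K_4^3$ is forbidden by Lemma~\ref{k4}, so $\mathcal H(T,P)$ is $K_4^3$-free; and $J_4$ is forbidden by Lemma~\ref{j4}. If $\mathcal H(T,P)$ is also $K_4^{3-}$-free then by Lemma~\ref{5vertexK_4^3-2} it is $C_5$ or $F_{3,2}$, with $\lambda\le 0.04<\tfrac{4}{81}\approx 0.0494$. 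Otherwise $\mathcal H(T,P)$ contains $K_4^{3-}$; the four points then have distance multiset forcing either an equilateral triangle with its center or a rectangle, and the rectangle is excluded since $T$ is not right-angled — so those four points form an equilateral triangle with its center. I would then analyze how a fifth point can attach, using that the shadow graph is complete, to pin down $\mathcal H(T,P)$ up to isomorphism and bound its Lagrangian by a direct computation (or by embedding it into a convenient larger $3$-graph as was done with $H_5$ in Lemma~\ref{6gonlagrange}).

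For $|P|=6$ or $7$, by Theorems~\ref{3distances} and~\ref{3distances6} the set $P$ (which is at most a $3$-distance set) is contained in one of the finitely many listed configurations: a regular $7$-gon, a regular $6$-gon with its center, a regular pentagon with its center, an equilateral triangle of side $2$ with the three side-midpoints, or the two exceptional $6$-point sets of Theorem~\ref{3distances6} with $(b,c)=(\sqrt2,\gamma)$ or $(\gamma,\sqrt2\gamma)$. For each, I would compute which triples span a triangle similar to type $(120^\circ,30^\circ,30^\circ)$ and hence determine $\mathcal H(T,P)$; several candidates are killed immediately because the shadow graph fails to be complete (e.g. in the $6$-gon-with-center and the exceptional sets, the marked vertices of Figure~\ref{Fig:shapes3} lie in no relevant triangle). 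The surviving case should be the equilateral triangle with its center (a $4$-point set, already handled, giving exactly $\tfrac{4}{81}$ on the weights $(\tfrac29,\tfrac29,\tfrac29,\tfrac13)$) or a slightly larger set whose Lagrangian I bound by an explicit optimization analogous to Lemma~\ref{6gonlagrange}: reduce to three ``super-variables'' by grouping symmetric vertices, apply AM--GM, and optimize a one- or two-variable polynomial.

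The main obstacle I anticipate is the case analysis for $|P|\in\{5,6,7\}$: unlike the $(90^\circ,60^\circ,30^\circ)$ case where the rigidity of right triangles made the geometry very constrained, here the relevant hypergraph $\mathcal H(T,P)$ on a regular $6$-gon, pentagon-with-center, or the exceptional configurations could be moderately rich, and the corresponding Lagrangian optimization — while elementary — needs to be pushed through carefully to confirm it never exceeds $\tfrac{4}{81}$. Identifying the right grouping of vertices to collapse the Lagrangian polynomial into few variables (as the $z_i=x_i+x_{i+3}$ substitution did in Lemma~\ref{6gonlagrange}) is the key technical move, and getting a clean bound of exactly $\tfrac4{81}$ rather than something slightly larger is where the care is required.
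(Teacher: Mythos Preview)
There is a genuine gap. Your ``small case'' for $|P|\le 4$ is wrong: you write $\lambda(K_4^3)=\tfrac1{16}<\tfrac4{81}$, but in fact $\tfrac1{16}=0.0625>\tfrac4{81}\approx 0.0494$. So bounding by $\lambda(K_4^3)$ does not suffice. The fix is exactly the lemma you invoke later for $|P|=5$: since $T$ has no right angle, Lemma~\ref{k4} says $K_4^3$ is forbidden, hence $\mathcal H(T,P)$ is $K_4^3$-free already on four vertices, and therefore $\lambda(\mathcal H(T,P))\le \lambda(K_4^{3-})=\tfrac4{81}$ (attained at $x_1=\tfrac13$, $x_2=x_3=x_4=\tfrac29$). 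This is precisely the extremal configuration you identified, and it arises already at $|P|=4$, not at $|P|\ge 5$.

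Beyond that, your plan for $|P|\in\{5,6,7\}$ is vastly more work than necessary because you overlooked the key structural simplification: $T$ is \emph{isosceles}, so $A(P)\subseteq\{1,\sqrt3\}$ and $P$ is a $2$-distance set, not a $3$-distance set. By Lemma~\ref{2distances} (Erd\H os--Kelly) this gives $|P|\le 5$ immediately, and if $|P|=5$ then $P$ is a regular pentagon, in which case $\mathcal H(T,P)$ is empty, contradicting the complete shadow graph. So the cases $|P|\ge 5$ are vacuous, and all of the $3$-distance classification (Theorems~\ref{3distances} and~\ref{3distances6}) and the proposed $H_5$-style Lagrangian optimization are unnecessary here. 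The paper's proof is just: $|P|\le 5$ by the $2$-distance bound; $|P|=5$ is impossible; $|P|=4$ gives $\lambda\le\lambda(K_4^{3-})=\tfrac4{81}$ by $K_4^3$-freeness; $|P|=3$ gives $\tfrac1{27}$.
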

\begin{proof}
Let $T$ be a triangle of type $(120^\circ,30^\circ,30^\circ)$. By Lemma~\ref{lagrangian2} there exists a point set $P$ of size $|P|\leq 7$ such that $\mathcal{H}(T,P)$ has complete shadow graph and
$h(n,T)\leq n^3 \lambda({\mathcal{H}(T,P)})$.

Since every pair is contained in a  triangle congruent to $T$, the distance between every  pairs of points in $P$ is one of two values. By Lemma~\ref{2distances}, we conclude $|P|\leq 5$. If $|P|=3$, then $\lambda({\mathcal{H}(T,P)})\leq \max_{x\in \Delta_3} x_1x_2x_3\leq \frac{1}{27}$. If $|P|=4$, then by Lemma~\ref{k4}, $\mathcal{H}(T,P)$ is $K_4^{3}$-free. Thus, 
\begin{align*}
\lambda({\mathcal{H}(T,P)})\leq \max_{\textbf{x}\in \Delta_4} x_1x_2x_3+x_1x_2x_4+x_1x_3x_4=\lambda(K_4^{3-})= \frac{4}{81},
\end{align*}
where the maximum is obtained when $x_1=\frac{1}{3}$ and $x_2=x_3=x_4=\frac{2}{9}$, as e.g. observed in \cite{BaberTalbot}. 
If $|P|=5$, then by Lemma~\ref{2distances}, the point set $P$ forms a regular pentagon. We conclude that $\mathcal{H}(T,P)$ is the empty graph, contradicting that every pair of points in $P$ is contained in a  triangle congruent to $T$. We conclude $h(n,T)\leq \frac{4}{81}n^3$.
\end{proof}

\begin{lemma}\label{3636108}
Let $T$ be a triangle of type $(108^\circ,36^\circ,36^\circ)$ or $(72^\circ,72^\circ,36^\circ)$. Then $h(n,T)\leq \frac{n^3}{25}$.
\end{lemma}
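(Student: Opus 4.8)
The plan is to specialize Lemma~\ref{lagrangian2}: it produces a point set $P$ with $|P|\le 7$ whose congruence $3$-graph $\mathcal{H}(T,P)$ has complete shadow graph and satisfies $h(n,T)\le n^3\lambda(\mathcal{H}(T,P))$, so it suffices to show $\lambda(\mathcal{H}(T,P))\le \tfrac1{25}$. The key structural input is that $T$ is isosceles, so its side lengths take exactly two distinct values; since the shadow graph of $\mathcal{H}(T,P)$ is complete, every pair of points of $P$ lies in a triangle congruent to $T$, hence the pairwise distances in $P$ take at most two values. By Lemma~\ref{1distance} and Lemma~\ref{2distances} this forces $|P|\le 5$, leaving only the cases $|P|\le 4$ and $|P|=5$.

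For $|P|\le 4$ I would argue as follows. Since $T$ is neither right angled nor of type $(120^\circ,30^\circ,30^\circ)$, Lemma~\ref{k4minus} together with Lemma~\ref{exactforbidden} shows that $\mathcal{H}(T,P)$ is $K_4^{3-}$-free. On at most four vertices a $K_4^{3-}$-free $3$-graph has at most two edges (any three triples on a $4$-set form a $K_4^{3-}$), and two distinct triples on a $4$-set meet in exactly two points; a direct check then gives $\lambda(\mathcal{H}(T,P))\le \max_{\mathbf{x}\in\Delta_4} x_1x_2(x_3+x_4)\le \tfrac1{27}<\tfrac1{25}$.

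The heart of the proof is the case $|P|=5$, where Lemma~\ref{2distances} forces $P$ to be a regular pentagon with side length $s$ and diagonal length $d$. Classifying the $\binom{5}{3}=10$ triangles it spans: three consecutive vertices span a triangle with side lengths $s,s,d$, which is of type $(108^\circ,36^\circ,36^\circ)$; any other triple spans a triangle with side lengths $s,d,d$, which is of type $(72^\circ,72^\circ,36^\circ)$. Hence, for each of the two triangle types in the statement, $\mathcal{H}(T,P)$ consists of exactly the five triangles of that type. If $T$ is of type $(108^\circ,36^\circ,36^\circ)$ these are the consecutive triples $123,234,345,451,512$, which form $C_5$; if $T$ is of type $(72^\circ,72^\circ,36^\circ)$ these are the triples $124,235,341,452,513$, and relabelling the pentagon's vertices by the pentagram map $i\mapsto 2i \pmod 5$ carries this $3$-graph onto $C_5$ as well. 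In either case $\lambda(\mathcal{H}(T,P))=\lambda(C_5)=\tfrac1{25}$, the value of $\lambda(C_5)$ already being recorded in the proof of Lemma~\ref{306090}. Combining the two cases gives $h(n,T)\le n^3\lambda(\mathcal{H}(T,P))\le n^3/25$.

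The only delicate point is this last case: one must correctly read off the two congruence classes of triangles inside a regular pentagon and verify that the resulting $5$-edge $3$-graph is isomorphic to $C_5$ in both situations --- immediate for the $(108^\circ,36^\circ,36^\circ)$ triangle and needing the pentagram relabelling for the $(72^\circ,72^\circ,36^\circ)$ one. Apart from that, the argument is a direct application of the structural lemmas from Sections~\ref{forbidden} and~\ref{lagrang} plus an optimization over a simplex of dimension at most three.
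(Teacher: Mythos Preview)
Your proof is correct and follows essentially the same route as the paper's: apply Lemma~\ref{lagrangian2}, use that $T$ is isosceles together with Lemma~\ref{2distances} to force $|P|\le 5$, dispose of $|P|\le 4$ via $K_4^{3-}$-freeness (Lemma~\ref{k4minus}), and identify the pentagon case as $C_5$ with $\lambda(C_5)=\tfrac1{25}$. The only cosmetic differences are that the paper rules out $|P|=4$ by a shadow-graph contradiction rather than your direct Lagrangian bound, and that you spell out the pentagram relabelling for the $(72^\circ,72^\circ,36^\circ)$ case where the paper simply asserts that $\mathcal{H}(T,P)$ is either empty or isomorphic to $C_5$.
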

\begin{proof}
Let $T$ be a triangle of type $(36^\circ,36^\circ,108^\circ)$ or $(72^\circ,72^\circ,36^\circ)$. By Lemma~\ref{lagrangian2} there exists a point set $P$ of size at most $7$ such that $\mathcal{H}(T,P)$ has complete shadow graph and $h(n,T)\leq n^3 \lambda({\mathcal{H}(T,P)})$.

Since every pair is contained in a  triangle congruent to $T$, the distance between every  pair of points in $P$ is one of two values. By Lemma~\ref{2distances}, we conclude $|P|\leq 5$. 

If $|P|=3$, then $\lambda({\mathcal{H}(T,P)})\leq \max_{\textbf{x}\in \Delta_3} x_1x_2x_3\leq \frac{1}{27}$. If $|P|=4$, then by Lemma~\ref{k4minus}, $\mathcal{H}(T,P)$ is $K_4^{3-}$-free, but then there are at most two edges in $\mathcal{H}(T,P)$, contradicting that every pair of points in $P$ is contained in a congruent triangle to $T$. 

If $|P|=5$, then by Lemma~\ref{2distances}, the point set $P$ forms a regular pentagon. We conclude that $\mathcal{H}(T,P)$ is either the empty graph or is isomorphic to $C_5$. Since every pair of points in $P$ is contained in a congruent triangle to $T$, the $3$-graph $\mathcal{H}(T,P)$ cannot be the empty graph. We conclude that $\mathcal{H}(T,P)$ is isomorphic to $C_5$ and obtain 
\begin{align*}
\lambda({\mathcal{H}(T,P)})=\lambda(C_5)=  \max_{\textbf{x}\in \Delta_{5}}\ x_1x_2x_3+x_2x_3x_4+x_3x_4x_5+x_4x_5x_1+x_5x_1x_2=\frac{1}{25},
\end{align*}
where the maximum is obtained at $x_1=x_2=x_3=x_4=x_5=\frac{1}{5}$ as e.g. observed in \cite{BaberTalbot}.
\end{proof}

\begin{lemma}\label{mosttriangles}
Let $T$ be not right angled, and not of type $(120^\circ,30^\circ,30^\circ)$, $(\frac{4\cdot 180}{7}^\circ,\frac{2\cdot 180}{7}^\circ,\frac{180}{7}^\circ)$, $(108^\circ,36^\circ,36^\circ)$ or $(72^\circ,72^\circ,36^\circ)$. Then, $h(n,T)\leq \frac{n^3}{27}$.
\end{lemma}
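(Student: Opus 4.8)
The plan is to invoke the Lagrangian reduction (Lemma~\ref{lagrangian2}) to obtain a point set $P$ with $|P|\le 7$ such that $\mathcal{H}(T,P)$ has complete shadow graph and $h(n,T)\le n^3\lambda(\mathcal{H}(T,P))$, and then to bound $\lambda(\mathcal{H}(T,P))$ by $\tfrac{1}{27}$ in every possible case. The crucial structural input is that, since $T$ is excluded from all the special types, the forbidden-hypergraph lemmas of Section~\ref{forbidden} apply simultaneously: by Lemma~\ref{k4minus} (valid since $T$ is not right angled and not of type $(120^\circ,30^\circ,30^\circ)$) the $3$-graph $\mathcal{H}(T,P)$ is $K_4^{3-}$-free, and by Lemma~\ref{f32} (valid since $T$ is not of type $(90^\circ,60^\circ,30^\circ)$, which is right angled hence excluded) it is also $F_{3,2}$-free, and by Lemma~\ref{C5} (valid since $T$ is excluded from types $(108^\circ,36^\circ,36^\circ)$, $(72^\circ,72^\circ,36^\circ)$, $(120^\circ,30^\circ,30^\circ)$) it is $C_5$-free. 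So $\mathcal{H}(T,P)$ is a $\{K_4^{3-},F_{3,2},C_5\}$-free $3$-graph on at most $7$ vertices with complete shadow graph.

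Next I would dispose of the small cases by cardinality. If $|P|\le 3$ then $\lambda(\mathcal{H}(T,P))\le \max_{\mathbf{x}\in\Delta_3}x_1x_2x_3 = \tfrac{1}{27}$, which is exactly the bound we want. If $|P|=4$, then being $K_4^{3-}$-free forces at most two edges, so $\lambda\le \max_{\mathbf{x}\in\Delta_4}(x_1x_2x_3+x_1x_2x_4)=\max x_1x_2(x_3+x_4)\le \tfrac{1}{27}$ (by AM–GM on $x_1,x_2,x_3+x_4$). If $|P|=5$, then Lemma~\ref{5vertexK_4^3-C5F32} says there is no $\{K_4^{3-},C_5,F_{3,2}\}$-free $5$-vertex $3$-graph with complete shadow graph — so this case is simply impossible.

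It remains to handle $|P|=6$ and $|P|=7$. Here $P$ is a point set whose pairwise distances take at most three values (complete shadow graph means every pair spans a triangle congruent to $T$, and a triangle has only three side lengths), so Theorem~\ref{3distances} and Theorem~\ref{3distances6} pin down $P$ up to isomorphism: for $|P|=7$ it is a regular $7$-gon or a regular $6$-gon plus center, and for $|P|=6$ it is one of the explicit maximal $3$-distance sets of Figure~\ref{Fig:shapes3} (or a subset of a size-$7$ configuration). In each such configuration I would check the structure of $\mathcal{H}(T,P)$: the annotation in Figure~\ref{Fig:shapes3} already records that each size-$6$ configuration has two vertices not lying in any triangle with side lengths $\{1,b,c\}$, which contradicts the complete-shadow-graph property, ruling those out; similarly the regular $6$-gon-plus-center and the regular $7$-gon have a vertex (the center, resp. by a short angle computation) not in the shadow in the relevant way, or force $\mathcal{H}(T,P)$ to be $C_5$-containing/$K_4^{3-}$-containing, again contradicting freeness. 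The only surviving possibility to analyze carefully is the regular $7$-gon, where for $T$ of type $(\tfrac{4\cdot180}{7},\tfrac{2\cdot180}{7},\tfrac{180}{7})$ one would get a nonempty $\mathcal{H}(T,P)$ — but that type is excluded by hypothesis, so $\mathcal{H}(T,P)$ is empty there too, contradicting completeness of the shadow graph.

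The main obstacle is the casework for $|P|\in\{6,7\}$: one must verify, for every isomorphism type of $3$-distance set of size $6$ or $7$ provided by Shinohara's theorems, that the requirement "every pair of points lies in a triangle congruent to $T$" fails — equivalently, that in each such configuration there is a pair of points which is not a side of any copy of $T$ in the configuration. For the size-$6$ configurations this is exactly the content of the two marked vertices in Figure~\ref{Fig:shapes3}, and for the size-$7$ configurations it follows from the fact that $T$ being a generic (non-excluded) triangle cannot be realized among them unless its angles are the forced special values; making this last point precise (identifying precisely which triangle types each size-$6$/size-$7$ configuration can support, and observing all of them are among the excluded types or have an unsupported pair) is the delicate step. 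Once all of $|P|\in\{5,6,7\}$ are excluded, only $|P|\le 4$ survives, and there $\lambda(\mathcal{H}(T,P))\le\tfrac{1}{27}$, giving $h(n,T)\le \tfrac{n^3}{27}$.
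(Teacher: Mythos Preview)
Your plan is correct and follows essentially the same route as the paper's proof: reduce via Lemma~\ref{lagrangian2}, use Lemmas~\ref{f32}, \ref{k4minus}, \ref{C5} to get $\{K_4^{3-},F_{3,2},C_5\}$-freeness, dispose of $|P|\le 5$ exactly as you do (including Lemma~\ref{5vertexK_4^3-C5F32} for $|P|=5$), and then eliminate $|P|\in\{6,7\}$ via Shinohara's classifications and the marked pairs in Figure~\ref{Fig:shapes3}. The only cosmetic difference is that for $|P|=4$ the paper derives a contradiction with the complete shadow graph (two edges cannot cover all six pairs) rather than bounding $\lambda$ directly, and the paper explicitly separates off the isosceles case (where $|P|\le 5$ by Lemma~\ref{2distances}) before invoking the $3$-distance classification.
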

\begin{proof}
Without loss of generality, let the side lengths of $T$ be $1\leq b \leq c$.
Again, by Lemma~\ref{lagrangian2} there exists a point set $P$ of size at most $7$ such that $\mathcal{H}(T,P)$ has complete shadow graph and $h(n,T)\leq n^3 \lambda({\mathcal{H}(T,P)})$.

By Lemmas~\ref{f32}, \ref{k4minus}, \ref{C5} the hypergraph $\mathcal{H}(T,P)$ is $\{F_{3,2},K_4^{3-},C_5\}$-free.  
Since every pair is contained in a triangle congruent to $T$, the distances between two points of $P$ take values $1,b$ or $c$.

If $|P|=3$, then trivially $\lambda({\mathcal{H}(T,P)}\leq \frac{1}{27}$. If $|P|=4$, because $\mathcal{H}(T,P)$ is $K_4^{3-}$-free, there are at most two edges in $\mathcal{H}(T,P)$, contradicting that every pair of points in $P$ is contained in a triangle congruent to $T$. By Lemma~\ref{5vertexK_4^3-C5F32}, $|P|=5$ is not possible. If $|\{1,b,c\}|\leq 2$, then $|P|\leq 5$ by Lemma~\ref{2distances} and  thus $\lambda({\mathcal{H}(T,P)}\leq \frac{1}{27}$ by the  previous analysis. Therefore, we can assume that $|\{1,b,c\}|=3$.

If $|P|=6$, then by Lemma~\ref{2distances}, $P$ is a $3$-distance set. If $P$ is not a maximal 3-distance set, then by Theorem~\ref{3distances}, $P$ is contained in a regular 7-gon or a regular $6$-gon with its center. Again, this implies that $T$ is of type $(\frac{4\cdot 180}{7}^\circ,\frac{2\cdot 180}{7}^\circ,\frac{180}{7}^\circ)$ or it is right angled, a contradiction. If $P$ is a maximal 3-distance set, then by Theorem~\ref{3distances6}, $P$ is isomorphic to one of the point sets depicted in Figure~\ref{Fig:shapes3} (a)-(f). However, in each of those point sets there exists a pair of vertices which is not contained in a triangle congruent to $T$, a contradiction.

If $|P|=7$, then by Theorem~\ref{3distances}, $P$ forms a regular 7-gon or a regular $6$-gon with its center. Thus, $T$ is of type $(\frac{4\cdot 180}{7}^\circ,\frac{2\cdot 180}{7}^\circ,\frac{180}{7}^\circ)$ or it is right angled, a contradiction.

\end{proof}

\section{Proof of Theorem~\ref{upperbounds}}
\label{final}
The lower bounds on $h(n,T)$ have been presented in the introduction. Here, we present the corres\-ponding upper bounds. The following lemma translates asymptotic bounds into exact bounds.

\begin{lemma}
\label{asymp}
Let $T$ be a triangle. If $h(n,T)\leq a n^3(1+o(1))$ for some $a\in [0,1]$, then $h(n,T)\leq an^3$ for every positive integer $n$.
\end{lemma}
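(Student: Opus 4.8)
The plan is to exploit the super-multiplicativity of $h(n,T)$ under a blow-up (vertex-duplication) operation on extremal point configurations. The key observation is that $h$ is \emph{super-multiplicative} in the following sense: for all positive integers $n$ and $t$, we have $h(nt,T) \geq t^3 h(n,T)$. Indeed, take a point set $P$ of size $n$ achieving $h(n,T,\varepsilon)$ for suitable small $\varepsilon$, and replace each point $p \in P$ by a cluster of $t$ points inside $B_{\varepsilon'/2}(p)$ for $\varepsilon' = \varepsilon \cdot \min\{a,b,c\}$; any triangle that was $(\varepsilon/2)$-congruent to $T$ on the original points gives rise to $t^3$ triangles $\varepsilon$-congruent to $T$ on the blown-up set, so $h(nt, T) \geq h(nt, T, \varepsilon) \geq t^3 h(n, T, \varepsilon/2) \geq t^3 h(n,T)$ after choosing $\varepsilon$ appropriately. (One must be slightly careful that the minimum over $\varepsilon$ in the definition of $h(n,T)$ is attained or approached, but since for each fixed $n$ the quantity $h(n,T,\varepsilon)$ is a non-decreasing integer-valued function of $\varepsilon$ taking finitely many values, the minimum is attained at all sufficiently small $\varepsilon$, so this is not an issue.)

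Granting the super-multiplicativity $h(nt,T) \geq t^3 h(n,T)$, I would argue by contradiction. Suppose $h(n_0, T) > a n_0^3$ for some fixed $n_0$; since $h(n_0,T)$ is an integer, in fact $h(n_0,T) \geq a n_0^3 + \delta$ for some $\delta > 0$ (one can take $\delta = 1$ if $a n_0^3$ is an integer, or $\delta = \lceil a n_0^3 \rceil - a n_0^3$ otherwise, but any fixed $\delta>0$ works). Then for every positive integer $t$,
\begin{align*}
h(n_0 t, T) \geq t^3 h(n_0, T) \geq t^3 (a n_0^3 + \delta) = a (n_0 t)^3 + \delta t^3.
\end{align*}
Writing $N = n_0 t$, this says $h(N, T) \geq a N^3 + \delta (N/n_0)^3 = a N^3 \big(1 + \tfrac{\delta}{a n_0^3}\big)$ along the infinite sequence $N = n_0, 2n_0, 3n_0, \ldots$ (or $h(N,T) \geq aN^3 + \delta N^3/n_0^3$ even when $a = 0$). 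This contradicts the hypothesis $h(N,T) \leq a N^3 (1 + o(1))$, because the right-hand side of that hypothesis, divided by $N^3$, tends to $a$, while we have just exhibited an infinite subsequence on which $h(N,T)/N^3 \geq a + \delta/n_0^3$, a quantity bounded away from $a$. Hence no such $n_0$ exists, and $h(n,T) \leq a n^3$ for every positive integer $n$.

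The main obstacle, and the only place requiring genuine care, is the clean formulation and proof of the blow-up inequality $h(nt,T) \geq t^3 h(n,T)$ with the quantifier over $\varepsilon$ handled correctly: one must ensure that an extremal configuration for the "$\min_\varepsilon$" value $h(n,T)$ can be blown up so that the resulting configuration is extremal (or near-extremal) for the "$\min_\varepsilon$" value $h(nt,T)$ as well, i.e., that shrinking the clusters enough makes the $\varepsilon$ for the large set as small as we like. This is straightforward once one notes that $h(m, T, \varepsilon)$ is monotone non-decreasing in $\varepsilon$ and takes only finitely many values for fixed $m$ (bounded by $\binom{m}{3}$), so $h(m,T) = h(m,T,\varepsilon)$ for all $\varepsilon$ below some threshold $\varepsilon_m$; choosing the cluster radius well below $\varepsilon_{nt} \cdot \min\{a,b,c\}$ and below $\varepsilon_n \cdot \min\{a,b,c\} / 2$ does the job. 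Everything else is elementary arithmetic with the cube function.
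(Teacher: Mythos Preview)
Your proof is correct and follows essentially the same blow-up argument as the paper: replace each point of an extremal $n$-point configuration by a small cluster of $t$ points to obtain $h(nt,T)\geq t^3 h(n,T)$, and then derive a contradiction with the asymptotic hypothesis along the subsequence $N=n_0 t$. Your treatment of the $\varepsilon$-quantifier (via the observation that $h(m,T,\varepsilon)$ is a non-decreasing integer-valued function of $\varepsilon$, hence eventually constant) is in fact more explicit than the paper's, which handles this point tersely.
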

\begin{proof}
Assume, for a contradiction, that there exists a positive integer $k$ such that $h(k,T)>ak^3$. Let $\varepsilon>0$. There exists a point set $P$ with $k$ points such that the number of triangles that are $\frac{\varepsilon}{2}$-congruent to $T$ is at least $\lfloor ak^3 \rfloor+1$. Now, we construct a new point set $P'$ from $P$ with $nk$  points, where for each point $p\in P$ we add $n-1$ new points in the disk of radius $\frac{\varepsilon}{4}$ centered around $p$. The number of triangles in $P'$ that are $\varepsilon$-congruent to $T$ is at least $(\lfloor ak^3 \rfloor+1) n^3$. Since $\varepsilon>0$ was arbitrary small, we get
\begin{align*}
h(nk,T) \geq (\lfloor ak^3 \rfloor+1) n^3= \frac{\lfloor ak^3 \rfloor+1}{k^3} (nk)^3,
\end{align*}
contradicting $h(n,T)\leq a n^3(1+o(1))$, because $\frac{\lfloor ak^3 \rfloor+1}{k^3}>a$.
\end{proof}

\begin{proof}[Proof of Theorem~\ref{upperbounds}]
The upper bounds for the statements (b), (d) and (e) hold by Lemmas~\ref{30-30-120},~\ref{3636108} and \ref{mosttriangles}, respectively.\\

\noindent (a): Let $T$ be a triangle not of type $(90^\circ,60^\circ,30^\circ)$, then by Lemmas~\ref{f32} and \ref{j4}, the hypergraphs $F_{3,2}$ $J_4$ are forbidden for $T$. We conclude
\begin{align*}
h(n,T)\leq \textup{ex}(n,\{F_{3,2},J_4\})\leq \frac{n^3}{16} (1+o(1)),
\end{align*}
where the last inequality holds because $\pi(F_{3,2},J_4)=\frac{3}{8}$, which was proved by Falgas-Ravry and Vaughan~\cite{RavryTuran} using Razborov's flag algebra method\cite{RazbarovK43,flagsRaz}. By Lemma~\ref{asymp}, we conclude $h(n,T)\leq \frac{n^3}{16}$. If $T$ is of type $(90^\circ,60^\circ,30^\circ)$, then by Lemma~\ref{306090}, $h(n,T)\leq \frac{n^3}{16}$.\\

\noindent (c): In this case we know that by Lemmas~\ref{f32}, \ref{k4minus} and \ref{C5}, the hypergraphs $F_{3,2},K_4^{3-}$ and $C_5$ are forbidden for $T$. We conclude
\begin{align*}
h(n,T)\leq \textup{ex}(n,\{K_4^{3-},F_{3,2},C_5\})\leq \frac{2}{49}n^3 (1+o(1)),
\end{align*}
where the last inequality holds because $\pi(\{K_4^{3-},F_{3,2},C_5\})=\frac{12}{49}$, which was proved by Falgas-Ravry and Vaughan~\cite{RavryTuran} using Razborov's flag algebra method. By Lemma~\ref{asymp}, we conclude $h(n,T)\leq \frac{2}{49}n^3$. 
\end{proof}

\bibliographystyle{abbrvurl}
\bibliography{final}

\begin{thebibliography}{10}

\bibitem{MR1727127}
B.~M. \'{A}brego and S.~Fern\'{a}ndez-Merchant.
\newblock On the maximum number of equilateral triangles. {I}.
\newblock {\em Discrete Comput. Geom.}, 23(1):129--135, 2000.

\bibitem{BaberTalbot}
R.~Baber and J.~Talbot.
\newblock Hypergraphs do jump.
\newblock {\em Combin. Probab. Comput.}, 20(2):161--171, 2011.

\bibitem{MR4421399}
J.~Balogh, F.~C. Clemen, and B.~Lidick\'{y}.
\newblock Hypergraph {T}ur\'{a}n problems in {$\ell_2$}-norm.
\newblock In {\em Surveys in combinatorics 2022}, volume 481 of {\em London
  Math. Soc. Lecture Note Ser.}, pages 21--63. Cambridge Univ. Press,
  Cambridge, 2022.

\bibitem{MR4404492}
J.~Balogh, F.~C. Clemen, and B.~Lidick\'{y}.
\newblock Maximum number of almost similar triangles in the plane.
\newblock {\em Comput. Geom.}, 105/106:Paper No. 101880, 15, 2022.

\bibitem{MR3953886}
I.~B\'{a}r\'{a}ny and Z.~F\"{u}redi.
\newblock Almost similar configurations.
\newblock {\em Bull. Hellenic Math. Soc.}, 63:17--37, 2019.

\bibitem{Bollobascancellative}
B.~Bollob\'{a}s.
\newblock Three-graphs without two triples whose symmetric difference is
  contained in a third.
\newblock {\em Discrete Math.}, 8:21--24, 1974.

\bibitem{MR2163782}
P.~Brass, W.~Moser, and J.~Pach.
\newblock {\em Research problems in discrete geometry}.
\newblock Springer, New York, 2005.

\bibitem{MR1526679}
P.~Erd\H{o}s and L.~M. Kelly.
\newblock Elementary {P}roblems and {S}olutions: {S}olutions: {E}735.
\newblock {\em Amer. Math. Monthly}, 54(4):227--229, 1947.
\newblock \href {https://doi.org/10.2307/2304710} {\path{doi:10.2307/2304710}}.

\bibitem{MR0392837}
P.~Erd\H{o}s and G.~Purdy.
\newblock Some extremal problems in geometry. {III}.
\newblock In {\em Proceedings of the {S}ixth {S}outheastern {C}onference on
  {C}ombinatorics, {G}raph {T}heory and {C}omputing ({F}lorida {A}tlantic
  {U}niv., {B}oca {R}aton, {F}la., 1975)}, Congressus Numerantium, No. XIV,
  pages 291--308. Utilitas Math., Winnipeg, Man., 1975.

\bibitem{RavryTuran}
V.~Falgas-Ravry and E.~R. Vaughan.
\newblock Applications of the semi-definite method to the {T}ur\'{a}n density
  problem for 3-graphs.
\newblock {\em Combin. Probab. Comput.}, 22(1):21--54, 2013.

\bibitem{F5Frankl}
P.~Frankl and Z.~F\"{u}redi.
\newblock A new generalization of the {E}rd{\H o}s-{K}o-{R}ado theorem.
\newblock {\em Combinatorica}, 3(3--4):341--349, 1983.

\bibitem{MR771722}
P.~Frankl and V.~R\"{o}dl.
\newblock Hypergraphs do not jump.
\newblock {\em Combinatorica}, 4(2-3):149--159, 1984.

\bibitem{Keevashsurvey}
P.~Keevash.
\newblock Hypergraph {T}ur\'{a}n problems.
\newblock In {\em Surveys in combinatorics 2011}, volume 392 of {\em London
  Math. Soc. Lecture Note Ser.}, pages 83--139. Cambridge Univ. Press,
  Cambridge, 2011.

\bibitem{MubayiF5}
P.~Keevash and D.~Mubayi.
\newblock Stability theorems for cancellative hypergraphs.
\newblock {\em J. Combin. Theory Ser. B}, 92(1):163--175, 2004.

\bibitem{Pa17}
J.~Pach.
\newblock Finite point configurations.
\newblock In {\em Handbook of discrete and computational geometry}, pages
  3--25. CRC, Boca Raton, FL, 3rd edition, 2017.

\bibitem{MR2040883}
J.~Pach and R.~Pinchasi.
\newblock How many unit equilateral triangles can be generated by {$N$} points
  in convex position?
\newblock {\em Amer. Math. Monthly}, 110(5):400--406, 2003.

\bibitem{flagsRaz}
A.~A. Razborov.
\newblock Flag algebras.
\newblock {\em J. Symbolic Logic}, 72(4):1239--1282, 2007.

\bibitem{RazbarovK43}
A.~A. Razborov.
\newblock On 3-hypergraphs with forbidden 4-vertex configurations.
\newblock {\em SIAM J. Discrete Math.}, 24(3):946--963, 2010.

\bibitem{MR2083454}
M.~Shinohara.
\newblock Classification of three-distance sets in two dimensional {E}uclidean
  space.
\newblock {\em European J. Combin.}, 25(7):1039--1058, 2004.

\end{thebibliography}

\end{document}